\setlist[enumerate]{label=(\thethm.\arabic*), before={\setcounter{enumi}{\value{equation}}}, after={\setcounter{equation}{\value{enumi}}}}
\DeclareFontFamily{U}{MnSymbolC}{}
\DeclareSymbolFont{MnSyC}{U}{MnSymbolC}{m}{n}
\DeclareFontShape{U}{MnSymbolC}{m}{n}{
    <-6>  MnSymbolC5
   <6-7>  MnSymbolC6
   <7-8>  MnSymbolC7
   <8-9>  MnSymbolC8
   <9-10> MnSymbolC9
  <10-12> MnSymbolC10
  <12->   MnSymbolC12}{}
\DeclareMathSymbol{\intprod}{\mathbin}{MnSyC}{'270}
\newcommand{\bP}{\mathbb{P}}
\newcommand{\R}{\mathbb{R}}
\newcommand{\CC}{\mathbb{C}}
\newcommand{\ddbar}{\partial\bar{\partial}}
\newcommand{\wh}{\widehat}
\newcommand{\wt}{\widetilde}
\newcommand{\cF}{\mathcal{F}}
\newcommand{\cC}{\mathcal{C}}
\renewcommand{\O}{\mathcal{O}}
\newcommand{\ep}{\varepsilon}
\renewcommand{\epsilon}{\varepsilon}
\renewcommand{\ker}{\mathrm{Ker} \,}
\newcommand{\ol}{\overline}
\renewcommand{\leq}{\leqslant}
\renewcommand{\geq}{\geqslant}
\newcommand{\Ricci}{\mathrm{Ricci} \,}
\newcommand{\Vol}{\mathrm{Vol} \,}
\newcommand{\Tr}{\mathrm{Tr} \,}
\newcommand{\ddc}{dd^c}
\newcommand{\Supp}{\mathrm {Supp}}
\newcommand{\Ker}{\mathrm{Ker}}
\newcommand{\C}{\mathcal C}
\newcommand{\dbar}{\bar \partial}
\newtheorem{thm}{Theorem}[section]
\newtheorem{lemma}[thm]{Lemma}
\newtheorem{proposition}[thm]{Proposition}
\newtheorem{question}[thm]{Question}
\newtheorem{conjecture}[thm]{Conjecture}
\theoremstyle{remark}
\newtheorem{remark}[thm]{Remark}
\numberwithin{equation}{thm}
\title{On the Ohsawa-Takegoshi extension theorem}
\author{Junyan Cao, Mihai Paun\\ \\ {{--with an appendix by--}}\\ {{Bo Berndtsson}}}
\address{Department of Mathematics\\Chalmers University
  of Technology \\
 S-412 96
  G\"OTEBORG} 
\email{ bob@chalmers.se}
\address{Institut de Math\'ematiques de Jussieu\\ Universit\'e Paris 6\\ 
4 place Jussieu\\
Paris\\}
\email{junyan.cao@imj-prg.fr}
\address{Institut f\"ur Mathematik, Universit\"at Bayreuth\\ 
95440 Bayreuth}
\email{mihai.paun@uni-bayreuth.de}
\begin{document}

\maketitle

\tableofcontents
\section{Introduction}

\noindent Since it was established in \cite{OT}, the Ohsawa-Takegoshi extension theorem turned out to be a fundamental tool in complex geometry. As of today, there are
uncountable many proofs and refinements of the original result and even more applications to both complex analysis and algebraic geometry. Very roughly, the set-up is as follows: $u$ is a canonical form defined on a sub-variety $Y\subset X$ with values in a Hermitian bundle $F\to X$. 
We are interested in the following two main questions.
\begin{enumerate}

\item[$Q_1.$] Does the section $u$ extend to $X$?

\smallskip

\item[$Q_2.$] If the answer to the previous question is ``yes", can one construct an extension whose $L^2$ norm is bounded by the $L^2$ norm of $u$,
up to an universal constant?
\end{enumerate}

\noindent If $Y$ is non-singular, then the results in e.g. \cite{Man} give --practically optimal--
curvature conditions for the bundle $F$ such that the answer to both questions above is affirmative.
We refer to the articles \cite{BB1}, \cite{BB2}, \cite{BL16}, \cite{ChCh}, \cite{JP1}, \cite{JP3},
\cite{DHP}, \cite{GZ}, \cite{Mat}, \cite{MV1}, \cite{MV2}, \cite{MV3}, \cite{Oh1}, \cite{Oh2}, \cite{Oh3}, \cite{Oh4}, \cite{Oh5}, \cite{Oh6}, \cite{MP} for many interesting developments and applications.
\smallskip

\noindent The case of a singular sub-variety $Y$ turns out to be significantly more 
difficult
and the most complete results obtained so far only treat the \emph{qualitative} aspect of the extension problem, that is to say the question $Q_1$, cf. \cite{CDM}. 
\smallskip

\noindent
In this article we are concerned with the question $Q_2$. We obtain a few \emph{quantitative} results for extension of
twisted forms defined on sub-varieties $Y$ which have simple normal crossings. Our main motivation is the Conjecture in \cite{DHP}. To begin with we fix some notations/conventions.
\medskip

\noindent Let $X$ be a non-singular, projective manifold and let
$\displaystyle Y:= \sum_{i=1}^N Y_i$ be a divisor
with simple normal crossings. Let $(L, h_L)$ be a Hermitian line
bundle on $X$, endowed with a metric
$h_L$. The following assumptions will be in force throughout this article.
\begin{enumerate}

\item[(a)] The usual curvature requirements are satisfied
\begin{equation}\nonumber \Theta_{h_L}(L)\geq 0, \quad
  \Theta_{h_L}(L)\geq \delta \Theta_{h_Y} (Y) ,
\end{equation}
where $\delta> 0$ is a positive real number and $h_Y$ is a smooth metric on the bundle corresponding to $\mathcal O(Y)$.
Let $s$ be the canonical section of $\mathcal O(Y)$ with the normalisation condition $|s|_{h_Y} ^2 \leq e^{-\delta}$. 
\smallskip

\item[(b)] The singularities of the metric $h_L$ of $L$ are of the following type
\begin{equation}\nonumber \label{ot4}
\varphi_L= \sum_j r_j\log|f_j|^2+ \psi_L
\end{equation}
where $f_j$ are local holomorphic functions such that they are not identically
zero when restricted to any of the components of $Y$
and $k_j>0$ are positive integers. Moreover, we assume that 
 $\psi_L$ is bounded.
\smallskip

\item[(c)] Let $\displaystyle 
u\in H^0\big(X, (K_X+ Y+ L)\otimes \O_X/\O_X(-Y)\big)$ 
be a twisted canonical form defined over $Y$. 
There exists a covering $(\Omega_i)$ of $X$ with coordinate sets
such that the restriction of the section $\displaystyle u|_{\Omega_i}$ of $u$ admits an extension $U_i$ which belongs to the multiplier ideal of $h_L$, i.e.
\begin{equation} \label{ot6}
\int_{\Omega_i}|U_i|^2e^{-\varphi_L}< + \infty.
\end{equation}
  
\end{enumerate}  
\medskip

\noindent We note that near a non-singular point of $Y$ the existence of $U_i$ follows from the usual $L^2$ hypothesis of OT theorem provided that $u$ belongs to the
multiplier ideal sheaf of $h_L|_Y$. But this may no longer be true
in a neighborhood of singular point of $Y$.  
\medskip

\noindent In addition to the natural hypothesis (a), (b) and (c) above we collect next two other requirements we need to impose for some of our statements to hold. Let $V_{\rm sing}$ be an open subset of $X$ containing the singular locus of $Y$. 

\begin{enumerate}

\smallskip
  
\item[(i)] We assume that there exists an open subset $V_{\rm sing}$ of $X$ containing the singularities of $Y$ such that the following hold.

  \smallskip
\begin{itemize}  
  
\item[($\rm i.\alpha$)] There exists a snc divisor $W= \sum W_j$ on $X$ such that the singularities of the restriction of the metric $h_L$ of $L$ to $V_{\rm sing}$ are as follows
$$\varphi_L= \sum_j \left(1- \frac{1}{k_j}\right)\log|z_j|^2+ \tau_L$$  
where $k_j$ are positive integers, and $z_j$ are the local equations of $W$. The local weight $\tau_L$ above is assumed to be bounded, and smooth outside the support of $W$.
  
\smallskip
  
\item[(i.$\beta$)] The curvature of the restriction of $\displaystyle h_L|_{V_{\rm sing}}$ is greater than $\displaystyle C_{\rm sing}\omega_{\mathcal C}|_{V_{\rm sing}}$, where $C_{\rm sing}$ is a positive constant, and $\omega_{\mathcal C}$ is a fixed K\"ahler metric with conic singularities on $\big(X, \sum(1-1/k_i)W_i\big)$.
\end{itemize}

\medskip
  
\item[(ii)] There exists an open subset $V_{\rm sing}$ of $X$ containing the singularities of $Y$ such that the curvature of the restriction of $\displaystyle h_L|_{V_{\rm sing}}$ is identically zero.  
\end{enumerate}  
\medskip

\noindent In this context our first result states as follows.

\begin{thm}\label{otconique} We assume that the metric $\displaystyle h_L= e^{-\varphi_L}$ of $L$ and the section $u$ verifies the requirements {\rm{(a), (b)}, (c)}
  as well as {\rm (i)} above. Then $u$ extends to $X$, and for each
  $1\geq \gamma\geq 0$ there exists an extension $U$ of $u$ such that we have the estimates
\begin{align}\label{conic1}
 \frac{1}{C}\int_{X\setminus V_{\rm sing}} |U|^2e^{-\varphi_Y- \varphi_L}dV_{\omega_{\mathcal C}} \leq  &
  \int_{Y\setminus V_{\rm sing}}\left|\frac {u}{ds}\right|^2e^{-\varphi_L}\nonumber \\
+ & \sum_i\left(\int_{Y_i\cap V_{\rm sing}}
  \left|\frac{u}{ds}\right|^{\frac{2}{1+\gamma}}e^{-\frac{\varphi_L}{1+ \gamma}}dV_{\omega_{\mathcal C}}\right)^{1+\gamma}
\end{align}
where $\omega_{\mathcal C}$ is the reference metric on $X$ and the
constant $C$ depends on $\gamma$, the geometry of $(V_{\rm sing}, \omega_{\mathcal C})$, the positivity constant and the upper bound for $\displaystyle \Tr_{\omega_{\cC}}dd^c\tau$ in {\rm (i.$\beta$)}. 
\end{thm}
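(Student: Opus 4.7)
The plan is to follow the Ohsawa-Takegoshi-Demailly scheme: first assemble a global smooth extension $\wt U$ of $u$ from the local extensions $U_i$ provided by (c) via a partition of unity, then correct it by solving a $\dbar$-equation whose solution vanishes on $Y$. The subtlety lies in producing sharp $L^2$ estimates on the singular locus $V_{\rm sing}$, where the usual positivity arising from (a) degenerates; conditions (i.$\alpha$) and (i.$\beta$) together with the reference conic metric $\omega_{\mathcal{C}}$ are designed precisely to supply the missing positivity, while the parameter $\gamma$ on the right-hand side reflects a H\"older/Jensen interpolation forced by the presence of SNC crossings of $Y$.

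Concretely, I would first regularize the data: replace $\varphi_L$ by a smooth approximation $\varphi_{L,\varepsilon}$ using the local model of (i.$\alpha$), smooth $\omega_{\mathcal{C}}$ into a family of K\"ahler metrics $\omega_\varepsilon$ by the standard $\varepsilon$-deformation of a conic metric, and fix a partition of unity $(\chi_i)$ subordinate to $(\Omega_i)$. Set $\wt U:=\sum_i \chi_i U_i$; then $f_\varepsilon:=\dbar\wt U$ is a smooth $(n,1)$-form with values in $K_X+Y+L$, and $f_\varepsilon/s$ is bounded since on each overlap $U_i-U_j$ vanishes along $Y$. Next, solve $\dbar v_\varepsilon = f_\varepsilon$ using the Ohsawa-Takegoshi twisting $\eta_\varepsilon := -\log(|s|_{h_Y}^2+\varepsilon^2)+A$ with $A$ large. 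Hypothesis (a) provides exactly the curvature positivity needed to absorb $i\partial\eta_\varepsilon\wedge\dbar\eta_\varepsilon$, and (i.$\beta$) delivers the uniform positivity $C_{\rm sing}\omega_{\mathcal{C}}$ on $V_{\rm sing}$ where (a) degenerates. The twisted Bochner-Kodaira inequality then yields
\[
\int_X |v_\varepsilon|^2 e^{-\varphi_Y-\varphi_{L,\varepsilon}} dV_{\omega_\varepsilon} \le C \int_X \frac{|f_\varepsilon|^2_{\omega_\varepsilon}}{|s|_{h_Y}^2+\varepsilon^2}\, e^{-\varphi_{L,\varepsilon}} dV_{\omega_\varepsilon},
\]
and $U_\varepsilon:=\wt U - v_\varepsilon$ is an extension of $u$.

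The next step is to split the right-hand side into its contributions over $X\setminus V_{\rm sing}$ and over $V_{\rm sing}$. Outside $V_{\rm sing}$, $Y$ is smooth and a standard sub-mean-value argument in the normal direction converts $\int |f_\varepsilon|^2/|s|_{h_Y}^2$ into the usual $L^2$ norm of $u/ds$ along $Y$, giving the first summand of \eqref{conic1}. On $V_{\rm sing}$, I would treat each branch $Y_i$ separately: using (i.$\alpha$)--(i.$\beta$), choose the local extension $U_i$ via a Bergman-type reproducing formula on the conic polydisk so that its $L^2$ norm near the crossings is dominated by the $L^{2/(1+\gamma)}$ norm of $u/ds$ on $Y_i$, raised to the power $1+\gamma$; this exponent arises from a H\"older/Jensen estimate balancing the conic volume form $dV_{\omega_{\mathcal{C}}}$ against the curvature bound $C_{\rm sing}\omega_{\mathcal{C}}$ in the normal coordinates prescribed by (i.$\alpha$). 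Summing over $i$ produces the second summand. Letting $\varepsilon\to 0$ and extracting a weak limit of $U_\varepsilon$ in $L^2(e^{-\varphi_Y-\varphi_L})$ delivers the required extension, and the constant $C$ inherits its dependence on $\gamma$, on $C_{\rm sing}$, on the upper bound of $\Tr_{\omega_{\mathcal{C}}}\ddc\tau_L$, and on the conic geometry of $(V_{\rm sing},\omega_{\mathcal{C}})$.

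The hard part will be the analysis on $V_{\rm sing}$: at a point where two or more components of $Y$ meet, the pointwise $L^2$-norm of any natural extension blows up, and only after raising the boundary data to the exponent $1+\gamma$ and absorbing the loss using (i.$\beta$) does one recover a finite bound. Balancing the conic-metric loss against the $\gamma$-dependent gain uniformly in $\varepsilon$---and in a manner compatible with an SNC induction on the components of $Y$---is what determines both the viability of the estimate and the precise shape of the constant $C$.
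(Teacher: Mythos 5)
Your overall scheme -- cutoff extension $\wt U=\sum_i\chi_iU_i$, twisted $\dbar$-equation with weight $\eta_\ep=-\log(|s|^2_{h_Y}+\ep^2)+A$, then a splitting of the data term into $X\setminus V_{\rm sing}$ and $V_{\rm sing}$ -- is the classical Ohsawa--Takegoshi route, and it runs into exactly the obstruction that the theorem is designed to circumvent: near a crossing point of $Y$ the quantity $u/ds$ is \emph{not} in $L^2(e^{-\varphi_L})$ (this is the content of \eqref{eq137}), so the limit as $\ep\to0$ of your right-hand side $\int_X|f_\ep|^2(|s|^2_{h_Y}+\ep^2)^{-1}e^{-\varphi_{L,\ep}}dV_{\omega_\ep}$ -- whose trace on $Y$ is an $L^2$ norm of $u/ds$ -- diverges precisely in the situation of interest. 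Your proposed remedy, choosing the local extensions $U_i$ near the crossings ``via a Bergman-type reproducing formula'' so that their contribution is dominated by the $L^{2/(1+\gamma)}$ norm of $u/ds$ raised to the power $1+\gamma$, is the entire content of the theorem at the singular points and is not substantiated: H\"older/Jensen runs in the wrong direction here, since on a bounded set one cannot dominate an $L^2$ quantity of the data by an $L^{2/(1+\gamma)}$ quantity without an a priori pointwise (sup) control, and nothing in your construction provides such control. Likewise your use of (i.$\beta$) (``supplies the missing positivity where (a) degenerates'') does not correspond to a mechanism in the direct approach: the problem at the crossings is not a failure of curvature positivity for solving $\dbar$, it is the divergence of the data term.

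The paper obtains the exponent $1+\gamma$ from the \emph{dual} side, and this is where the new ideas live. One works with the functional $\xi\mapsto\int_X\dbar(\rho_\ep U_0)\wedge\ol{\gamma_\xi}e^{-\phi_L}$ on test forms $\xi$ supported in $X\setminus(V_{\rm sing}\cup H)$, decomposes $\xi/s_Y=\xi_1+\xi_2$ with $\xi_1\in\Ker\dbar$ (justified by the approximation Lemma \ref{lim2} on complete Stein exhaustions), and controls the pairing in two regimes: outside $V_{\rm sing}$ by the Berndtsson-type a priori inequality of Theorem \ref{bobapprox} (the source of the $\log^2$ weights), and on $V_{\rm sing}$ by the mean-value inequality of Theorem \ref{meanV}. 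The latter is the crucial point you are missing: because $\Supp\xi$ avoids $V_{\rm sing}$, the component $\xi_1$ is \emph{harmonic} there; hypothesis (i.$\alpha$) (conic singularities of $h_L$ matching $\omega_{\cC}$) allows one to pass to local ramified covers and prove boundedness of $\xi_1$, and Moser iteration then gives $\sup_{\frac12V_{\rm sing}}|\xi_1|^2e^{-\varphi_L}\le C\int_{V_{\rm sing}}|\xi_1|^2e^{-\varphi_L}dV_{\omega_{\cC}}$, while (i.$\beta$) together with the Bochner formula (as in \eqref{eq123}--\eqref{eq124}) bounds $\int_{V_{\rm sing}}|\xi_1|^2e^{-\varphi_L}$ by $\int_X|\dbar^\star\xi|^2e^{-\varphi_L}$. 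It is the interpolation $|\xi_1|=|\xi_1|^{\gamma}\,|\xi_1|^{1-\gamma}$, with the sup bound absorbing $|\xi_1|^{\gamma}$, that produces the $L^{2/(1+\gamma)}$ norm of $u/ds$ with weight $e^{-\varphi_L/(1+\gamma)}$; the gain comes from the test form, not from a clever choice of $U_i$. Finally, the extension with the stated estimate (only over $X\setminus V_{\rm sing}$, note) is produced from the a priori inequality by Hahn--Banach/Riesz as in Theorem \ref{boestim}. Without the harmonicity-plus-mean-value mechanism, or some substitute yielding a sup bound at the crossings, your plan cannot close.
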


\begin{remark} The precise dependence of the constant $C$ in \eqref{conic1} of the quantities mentioned in Theorem \ref{otconique}
  can be easily extracted from the proof we present in Section 5. Moreover, one can easily construct a K\"ahler metric $\omega_{\cC}$ on $X$ such that it is smooth on $X\setminus \wt V_{\rm sing}$, and it has conic singularities along $\sum(1-1/k_i)W_i$
when restricted to $V_{\rm sing}$ only. Here $\wt V_{\rm sing}$ is any open subset of $X$ containing the closure of $V_{\rm sing}$.
  
\end{remark}

\begin{remark}\label{rk1}
  It is very likely that our arguments work under more general circumstances, e.g. one can probably establish the same result in the absence of the hypothesis (b) (via the regularisation procedure due to J.-P.~Demailly, cf. \cite{JP2}).
  But so far it is unclear to us how to
  remove the local strict positivity hypothesis in (i.$\beta$), or the fact that the singularities of $\displaystyle h_L|_{V_{\rm sing}}$ are assumed to be of conic type. 
\end{remark}
\medskip

\medskip

\noindent In conclusion, Theorem \ref{otconique} is providing an extension of $u$
whose
$L^2$ norm is bounded by the usual quantity outside the singularities of $Y$, and by
an \emph{ad-hoc} $L^{p}$ norm near $Y_{\rm sing}$, for any $p\in [1, 2[$. The example proving Claim 4 in the Appendix
shows that the this type of estimates are sharp.
\smallskip

However, the constant
$\displaystyle ``C"$ in \ref{otconique} involves the
geometry of the -local- pair $(V_{\rm sing}, \omega_{\mathcal C})$, or if one prefers,
the restriction of $h_L$ to $V_{\rm sing}$. Moreover,
we only allow singularities of conic type for $\displaystyle h_L|_{V_{\rm sing}}$. In order to try to ``guess'' the type of estimates one could hope for in general, we make the following observation. Let $\Omega\subset V_{\rm sing}$
be a coordinate open subset. The restriction of the RHS of \eqref{qi1} to
$\Omega$ is given by the following expression
\begin{equation}\label{obs1}
\int_{\Omega\cap Y}\frac{1}{\prod_j|f_j|^{\frac{2}{1+\gamma}}}\frac{|f_u|^{\frac{2}{1+\gamma}}d\lambda}{\prod_i |z_i|^{2(1-1/k_i)}}
\end{equation}
where $\prod f_j= 0$ is the local equation of $Y\cap \Omega$ and $\prod z_i= 0$ is the
equation of $W$. This can be rewritten as
\begin{equation}\label{obs2}
\int_{Y_i\cap \Omega}
  \left|\frac{u}{ds_Y}\right|^{\frac{2}{1+\gamma}}e^{-{\varphi_L}}d\lambda,
\end{equation}
so from this point of view the following important --and very challenging-- problem is natural.

\begin{conjecture}\label{cot2}
 We assume that the metric $\displaystyle h_L= e^{-\varphi_L}$ of $L$ and the section $u$ verifies the requirements {\rm{(a), (b)}, (c)}. Then $u$ extends to $X$, and for each $1\geq \gamma\geq 0$ there exists an extension $U$ of $u$ such that we have the estimates
\begin{align}\label{qi1}
 \frac{1}{C_\gamma(V_{\rm sing})}\int_{X\setminus V_{\rm sing}} |U|^2e^{-\varphi_Y- \varphi_L}dV_{\omega} \leq  &
  \int_{Y\setminus V_{\rm sing}}\left|\frac {u}{ds}\right|^2e^{-\varphi_L}\nonumber \\
+ & \sum_i\left(\int_{Y_i\cap V_{\rm sing}}
  \left|\frac{u}{ds}\right|^{\frac{2}{1+\gamma}}e^{-{\varphi_L}}dV_{\omega}\right)^{1+\gamma}
\end{align}
where $\omega$ is a reference K\"ahler metric on $X$ and $C_\gamma(V_{\rm sing})$
only depends on $(V_{\rm sing}, \omega)$ and the restriction of the metric $h_L|_Y$ to $Y$.
\end{conjecture}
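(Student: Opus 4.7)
The strategy is to reduce Conjecture~\ref{cot2} to Theorem~\ref{otconique} via an approximation procedure. The two statements differ only in hypothesis (i), which prescribes a conic structure on $h_L$ inside $V_{\rm sing}$ and the strict curvature lower bound (i.$\beta$). Thus the plan is to approximate $h_L = e^{-\varphi_L}$ by a sequence $h_{L,\ep} = e^{-\varphi_{L,\ep}}$ of singular metrics, each satisfying (i), with $\Theta_{h_{L,\ep}}(L) \ge 0$ and $\varphi_{L,\ep} \downarrow \varphi_L$ outside a pluripolar set.

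Concretely, fix $\wt V_{\rm sing} \Supset V_{\rm sing}$ and modify $\varphi_L$ only inside $\wt V_{\rm sing}$. First, a Demailly-type regularisation combined with a rational approximation of the exponents $r_j$ in hypothesis (b) produces a potential with conic singularities of the form $(1-1/k_j^{(\ep)})\log|z_j|^2 + \tau_{L,\ep}$ along an snc divisor $W_\ep$ containing the components of $W$ and $\Supp(Y)\cap V_{\rm sing}$. Second, one twists by a small K\"ahler potential $\ep \rho$ to enforce (i.$\beta$) with a strictly positive constant $C_{\rm sing}(\ep)$. Hypothesis (c) persists since the new singularities dominate the old ones. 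Applying Theorem~\ref{otconique} to $(L, h_{L,\ep})$ yields extensions $U_\ep$ satisfying \eqref{conic1} with some constant $C(\ep)$; a weak $L^2$ limit $U_\ep \rightharpoonup U$ together with standard lower-semicontinuity and monotone convergence arguments applied weight-by-weight then transfers the estimate to $(U, \varphi_L)$.

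The decisive obstacle is the \emph{uniform} control of $C(\ep)$ as $\ep \to 0$. In Theorem~\ref{otconique} the constant depends on $C_{\rm sing}(\ep)$ and on an upper bound for $\Tr_{\omega_\cC}\ddc \tau_{L,\ep}$, both of which will typically blow up under a naive regularisation of an arbitrary $\varphi_L$, so the theorem cannot be used as a black box. To succeed one has to internalise the conic K\"ahler geometry into the proof itself: replace the fixed background $\omega_\cC$ by a family $\omega_\ep$ of complete K\"ahler metrics on $X \setminus \Supp(W_\ep)$ adapted to $\varphi_{L,\ep}$, and in the underlying twisted $L^2$ estimate choose the twist function so that the positivity required to run the Bochner technique comes from a \emph{tangential} curvature condition along $Y$, controlled only by $h_L|_Y$ and by the geometry of $V_{\rm sing}$. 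Achieving this tangential reduction --- essentially an integration by parts that transfers the ambient curvature contribution into a boundary integral along $Y \setminus Y_{\rm sing}$ --- is where I expect the main work to lie; without it the approximation step yields only the qualitative extension statement of \cite{CDM} rather than the sought quantitative bound.
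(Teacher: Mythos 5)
This statement is a conjecture which the paper itself leaves open, and your proposal does not close it: the decisive step is missing, and you say so yourself. The reduction to Theorem \ref{otconique} founders exactly where you flag it. The constant in \eqref{conic1} depends on the positivity constant $C_{\rm sing}$ of (i.$\beta$) and on an upper bound for $\Tr_{\omega_{\cC}}dd^c\tau$, and both are used in an essential, non-removable way in the proof: the strict positivity enters through \eqref{eq123}--\eqref{eq124} to convert $\int_{V_{\rm sing}}|\xi_1|^2e^{-\varphi_L}dV_{\omega_{\cC}}$ into $\int_X|\dbar^\star\xi|^2e^{-\varphi_L}dV_{\omega_{\cC}}$, which is the only way the $\sup_\Omega|\xi_1|^\alpha$ factor produced by the mean-value inequality of Theorem \ref{meanV} gets reabsorbed into the a-priori estimate; and Theorem \ref{meanV} itself rests on the ramified-cover argument, which requires the singularities of $h_L|_{V_{\rm sing}}$ and of $\omega_{\cC}$ to match exactly (conic type along an snc divisor). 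Under any regularisation of a general $\varphi_L$ as in (b), either $C_{\rm sing}(\ep)\to 0$ or $\Tr_{\omega_\ep}dd^c\tau_{L,\ep}$ blows up, so $C(\ep)$ is not uniform and the weak limit only recovers the qualitative extension of \cite{CDM}. The ``tangential reduction'' you invoke to repair this is not an argument but a restatement of the open problem; the paper says as much in Remark \ref{rk1} (it is unclear how to remove (i.$\beta$) or the conic hypothesis) and immediately after the conjecture (the subtle point is precisely a constant depending only on $(V_{\rm sing},\omega)$ and $h_L|_Y$).

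There are also difficulties upstream of the constants. Your approximation step is not innocuous: the singularities allowed in (b) are of the form $\sum_j r_j\log|f_j|^2$ with no snc or exponent restriction, and turning them into conic weights $(1-1/k_j)\log|z_j|^2$ along an snc divisor generally requires a log resolution (a modification of $X$), not merely a perturbation of the potential; moreover you cannot simultaneously make the approximating metric more singular (to get conic poles dominating $\varphi_L$) and preserve hypothesis (c), since increasing the singularity enlarges $e^{-\varphi_{L,\ep}}$ and may destroy the local $L^2$ condition, while a less singular approximation loses the curvature inequalities (a) unless one twists, which changes the bundle. Finally, note that the conjectural right-hand side \eqref{qi1} carries the full weight $e^{-\varphi_L}$ on the singular part, whereas Theorem \ref{otconique} only yields $e^{-\varphi_L/(1+\gamma)}$ against the conic volume form; the appendix (the example proving Claim 4 and its compact analogue) shows this weight cannot be relaxed, so any limiting argument must hit the sharp exponent exactly --- another indication that the conjecture will not follow from a soft approximation of the known case.
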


\noindent It is our belief that the most subtle part of the previous conjecture would be to have an accurate estimate for the constant $C_\gamma(V_{\rm sing})$.

\medskip

\noindent Our next two results are of \emph{experimental} nature and therefore we
have decided to formulate them for surfaces only, so that we have $\dim(X)= 2$.
The same type of statements hold in arbitrary dimension, as one can easily convince oneself. The method of proof is completely identical to the case we explain here in detail, so for simplicity's sake we stick to the case of surfaces.
\smallskip

\noindent We fix next few more notations adapted to the pair
$(X, Y)$.

Let $\displaystyle (\Omega_i)_{i\in I}$ be covering of $X$ with open coordinate
subsets. By the simple normal crossing hypothesis we can choose co-ordinates
$\displaystyle z_i= (z_i^1, z_i^2)$ such
\begin{equation}\label{ot7}
  Y\cap \Omega_i= (z_i^1 \dots z_i^p= 0).
\end{equation}
for each $i\in I$ and some $p$ (depending on $i$). Let $(\theta_i)_{i\in I}$ be a
partition of unity subordinate to $\displaystyle (\Omega_i)_{i\in I}$.
\medskip

\noindent Since we assume that $X$ is a complex surface, the components of $Y$ are smooth curves. The singular set of $Y$ (i.e. the mutual intersections of its components) consists of a finite number of points of $X$, denoted by $p_1,\dots, p_s$. 

We assume that $\Omega_i$ is refined enough so that the section
$\displaystyle u|_{\Omega_i}$ is given by  
\begin{equation}\label{ot8}
f_idz_i^1\wedge dz_i^2
\end{equation}  
for some holomorphic function $f_i$. On overlapping open subsets, different expressions \eqref{ot8} are gluing only modulo a 2-form divisible by the equation of the divisor $Y$.

Let $p$ be one of the singular points of $Y$, assumed to be
the center of some $\Omega_i$. We denote by $t_i:= z_i^1\cdot z_i^2$; this is-- by our previous conventions-- the local equation of \emph{the cross} $Y\cap \Omega_i$. We can interpret the function (= $n-2$--form in general) $f_i$ as a local section of the bundle $\displaystyle L|_{\Omega_i}$, and as such we can consider
its derivative
\begin{equation}\label{ot9}
  \partial _{\varphi_L}f_i
\end{equation}
with respect to the Chern connection of $L$. The result is a $1$--form on $\Omega_i$.
\medskip

\noindent Given the hypothesis in our following statement, it is possible to construct an extension of 
$u$ by applying the result in \cite{Man}. However, here we obtain different type of estimates.

\begin{thm}\label{ot}
  Let $X$ be a smooth projective surface, and let $(L, h_L)$
  be a line bundle such that the usual curvature conditions {\rm (a)} and {\rm (b)} are satisfied. Assume moreover that $h_L$ is non-singular and
 for each $i\in I_{\rm sing}$ we have $f_i\in (z_i^1, z_i^2)$, in other words our section vanishes on the set singular points of $Y$.

\noindent Then there
  exists a holomorphic section $U$ of the bundle $K_X+ Y+ L$ for which the following hold.
\begin{enumerate}

\item[\rm(1)] The restriction $U|_{Y}$ of $U$ to $Y$ is equal to $u$. 
  \smallskip
  
\item[\rm(2)] There exists a constant $C(X, V_{\rm sing})> 0$ such that we have
\begin{align}\nonumber
 \frac{1}{C(X, V_{\rm sing})}\int_X|U|^2e^{-\varphi_Y-\varphi_L}\leq & 
  \int_{Y\setminus V_{\rm sing}}\log^2(\max|s_{j}|^2)\left|\frac {u}{ds}\right|^2e^{-\varphi_L}\nonumber \\
+ \label{log} & \int_{Y\cap V_{\rm sing}}\log^2(\max|s_{j}|^2)
  |\partial_{\varphi_L} u|^2e^{-\varphi_L}.
\end{align}
\end{enumerate}
\end{thm}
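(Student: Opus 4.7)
\noindent\emph{Proof sketch.} The plan is to follow the classical two-step Ohsawa--Takegoshi strategy: first build a smooth a priori extension $\tilde U$ of $u$ to all of $X$, then correct it by a solution of $\bar\partial v=\bar\partial\tilde U$ obtained from a weighted $L^{2}$ estimate, so that $U:=\tilde U-v$ is the sought holomorphic extension.

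For the first step, on each chart $\Omega_{i}$ the local representative $f_{i}\,dz_{i}^{1}\wedge dz_{i}^{2}$ already gives a holomorphic section $U_{i}$ of $(K_{X}+Y+L)|_{\Omega_{i}}$ restricting to $u$: on the smooth part of $Y$ this is a standard Taylor-expansion splitting, and near a crossing $p\in\Omega_{i}$ the hypothesis $f_{i}\in(z_{i}^{1},z_{i}^{2})$ is exactly what allows such a holomorphic lift. I glue via the partition of unity to set $\tilde U:=\sum_{i}\theta_{i}U_{i}$. Since any two lifts $U_{i}$, $U_{j}$ agree on $Y$ modulo the local equation of $Y$, their difference on any overlap is divisible by the canonical section $s$ of $\mathcal{O}(Y)$, and hence
\begin{equation*}
\bar\partial\tilde U=s\cdot\eta
\end{equation*}
for a smooth $K_{X}+Y+L$-valued $(0,1)$-form $\eta$ supported in the overlaps. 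Near a crossing, a first-jet computation exploiting the vanishing of $f_{i}$ on $\mathrm{Sing}(Y)$ will yield the pointwise bound $|\eta|\lesssim|\partial_{\varphi_{L}}u|$; away from $\mathrm{Sing}(Y)$, $|\eta|$ is controlled by $|u/ds|$ in the usual manner.

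For the second step, I intend to apply H\"ormander's $L^{2}$ estimate with a weight of the form
\begin{equation*}
\Phi:=\varphi_{L}+\varphi_{Y}+\chi,
\end{equation*}
where $\chi$ is an Ohsawa-type weight designed so that $e^{-\chi}$ behaves like $\log^{2}(\max_{j}|s_{j}|^{2})$ (for instance, a smooth version of $-\log\log^{2}(\max_{j}|s_{j}|^{2})$). Note that $\max_{j}|s_{j}|^{2}$ is bounded away from zero outside $V_{\rm sing}$, so $\chi$ is bounded there, while inside $V_{\rm sing}$ its blow-up along $\mathrm{Sing}(Y)$ contributes the logarithmic weight appearing in \eqref{log}. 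Because $\chi$ is concave and bounded above, $dd^{c}\chi$ has a controlled negative part, which the curvature hypothesis (a), namely $\Theta_{h_{L}}\ge\delta\,\Theta_{h_{Y}}\ge 0$, will absorb; $\Phi$ is then plurisubharmonic with positive curvature comparable to the $\chi''$-contribution. H\"ormander's estimate produces a solution $v$ with
\begin{equation*}
\int_{X}|v|^{2}e^{-\Phi}\;\le\;C\int_{X}|\bar\partial\tilde U|^{2}\,\frac{e^{-\Phi}}{\Theta_{\Phi}}.
\end{equation*}
Substituting $\bar\partial\tilde U=s\cdot\eta$, the factor $|s|^{2}e^{-\varphi_{Y}}$ on the right is bounded, and a residue/coarea computation on small tubular neighbourhoods of $Y$ converts the remaining integral into an integral over $Y$. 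The factor $e^{-\chi}$ reproduces the $\log^{2}(\max_{j}|s_{j}|^{2})$ weight; on $Y\setminus V_{\rm sing}$ the fibre-reduction of $|\eta|^{2}$ gives the $|u/ds|^{2}$ integrand, and at a crossing the bound $|\eta|\lesssim|\partial_{\varphi_{L}}u|$ supplies the $|\partial_{\varphi_{L}}u|^{2}$ integrand. Finally, the integrability of $|v|^{2}e^{-\Phi}$ near $Y$, where $e^{-\Phi}$ is strongly singular, forces $v|_{Y}=0$, so $U:=\tilde U-v$ is holomorphic, restricts to $u$ on $Y$ and satisfies \eqref{log}.

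The principal obstacle is the tuning of $\chi$: it must be singular enough along $Y$, and especially along $\mathrm{Sing}(Y)$, to force $v|_{Y}=0$ by integrability; bounded enough that $\Phi$ stays plurisubharmonic using only the positivity available from (a); and with $dd^{c}\chi$ matched to the factor $s$ in $\bar\partial\tilde U$ so that the residue computation reproduces exactly the $\log^{2}(\max_{j}|s_{j}|^{2})$ weight and no more. The non-smoothness of $\max_{j}|s_{j}|^{2}$ at the crossings forces one to work with a regularisation and to pass carefully to the limit. A second delicate point is the comparison $|\eta|\lesssim|\partial_{\varphi_{L}}u|$ at crossings: this is the step that converts a ``natural'' $|u|^{2}$-bound into the derivative bound $|\partial_{\varphi_{L}}u|^{2}$, and it crucially uses the vanishing $f_{i}(p)=0$ at every crossing $p\in\Omega_{i}$.
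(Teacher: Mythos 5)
Your route (glue local lifts with a partition of unity, then correct by H\"ormander) is genuinely different from the paper's, but as sketched it has two gaps that sit exactly where the real difficulty lies. The crucial one is the claimed bound $|\eta|\lesssim|\partial_{\varphi_L}u|$ near a crossing. The error form $\eta=\sum_i\bar\partial\theta_i\wedge (U_i-U_j)/s$ is governed by the discrepancy of the \emph{normal jets} of the chosen local extensions on overlaps, an extrinsic quantity defined on open subsets of $X$; no first-jet computation converts it into the intrinsic $L^2$ data of $u$ on $Y$ appearing on the right of \eqref{log} with a constant depending only on $(X,V_{\rm sing})$. Obtaining such control is precisely the hard analytic content, and the paper produces it by entirely different means: the Berndtsson duality scheme, in which the a-priori inequality of Theorem \ref{BoB++} controls $\gamma_{\xi_1}$ on $Y$ by $\int_X\log^2(|s_Y|^2)|\dbar^\star\xi|^2e^{-\varphi_L}dV_\omega$, the restriction $\gamma_{\xi_1}|_{Y_j}$ is split as $\alpha_j+\beta_j$ with $\alpha_j$ holomorphic, and the crossing contribution is handled by the integration by parts \eqref{fin25}: since $\partial_{\varphi_L}\left(\theta f_u\log|z_2|^2\otimes e_L\right)$ pairs to zero against $\ol{\alpha_1}$, the pairing of $u/ds_Y$ with $\gamma_{\xi_1}$ is re-expressed through $\log|z_2|^2\,\partial_{\varphi_L}(f_u\otimes e_L)$, and \emph{this} is the origin of the $\log^2\cdot|\partial_{\varphi_L}u|^2$ term in \eqref{log}. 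Note also that the hypothesis $f_i\in(z_i^1,z_i^2)$ is used in the paper to make $u/ds_Y|_{Y_j}$ holomorphic across the crossing, not to estimate a gluing error.

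The second gap is the weight. With $\Phi=\varphi_L+\varphi_Y+\chi$ and $e^{-\chi}\sim\log^2(\max_j|s_j|^2)$, the weight $e^{-\Phi}$ has only an integrable, logarithmic singularity along $Y$ (here $\varphi_Y$ is the weight of the \emph{smooth} metric $h_Y$), so finiteness of $\int_X|v|^2e^{-\Phi}$ does not force $v|_Y=0$, and $U=\tilde U-v$ need not restrict to $u$. To enforce the vanishing you must include the full $\log|s_Y|^2$ (i.e.\ $1/|s_Y|^2$) singularity, and then the usual Ohsawa-type twisting delivers an estimate in which the $\log^2$ factor appears as a \emph{gain} in the denominator on the left-hand side, as in \eqref{eq135} and \eqref{bt702}, rather than as a weight multiplying the data on $Y$, and no $|\partial_{\varphi_L}u|^2$ term at the crossings is generated. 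So even after repairing the weight, the direct H\"ormander route as sketched does not yield the stated inequality; the specific shape of \eqref{log} is tied to the dual formulation and the integration by parts on $Y$ used in the paper (together with the functional-analytic conclusion of Theorem \ref{boestim}).
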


\medskip

\noindent We obtain the same type of result provided that the bundle
$(L, h_L)$ is flat near $Y_{\rm sing}$, as follows.

\begin{thm}\label{otflat}
  Let $X$ be a smooth projective surface, and let $(L, h_L)$
  be a line bundle such that the curvature and $L^2$ conditions {\rm (a)},
  {\rm (b)}, {\rm (c)} as well as the additional property {\rm (ii)} are satisfied.
  
\noindent Then there
  exists a holomorphic section $U$ of the bundle $K_X+ Y+ L$ enjoying the following properties.
\begin{enumerate}

\item[\rm(1)] The restriction $\displaystyle U|_{Y}$ is equal to $u$. 
  \smallskip
  
\item[\rm(2)] We have
\begin{align}\label{ot10}\nonumber
 \frac{1}{C(X, V_{\rm sing})}\int_{X\setminus V_{\rm sing}}|U|^2e^{-\varphi_Y-\varphi_L}\leq & 
  \int_{Y\setminus V_{\rm sing}}\log^2(\max|s_{j}|^2)\left|\frac {u}{ds}\right|^2e^{-\varphi_L}\nonumber \\
+ & \int_{Y\cap V_{\rm sing}}\log^2(\max|s_{j}|^2)
  |\partial_{\varphi_L} u|^2e^{-\varphi_L}.
\end{align}
\end{enumerate}
\end{thm}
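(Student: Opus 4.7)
\emph{Strategy.} My plan is to follow the same scheme as for Theorem~\ref{ot}, using hypothesis~{\rm (ii)} to replace the role played there by the smoothness of $h_L$ together with the vanishing $f_i\in(z_i^1,z_i^2)$. The key point is that near each crossing point $p_k\in Y_{\rm sing}$, flatness of $h_L$ provides a local trivialisation in which $\varphi_L\equiv 0$, so the local analysis near $p_k$ reduces to a Euclidean problem; outside $V_{\rm sing}$ the metric has only the singularities allowed by~{\rm (b)}, and the standard weighted $L^2$--extension ideology applies there.

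\emph{Local extensions and gluing.} On each chart $\Omega_i$, hypothesis~{\rm (c)} furnishes a holomorphic local extension $U_i\in H^0(\Omega_i,K_X+Y+L)$ of $u$ with $\int_{\Omega_i}|U_i|^2 e^{-\varphi_L}<+\infty$. For indices $i$ such that $\Omega_i$ contains some $p_k$, after passing to the flat gauge I would prescribe the first-order jet of $U_i$ along each branch of $Y$ through $p_k$ by means of the two residues of $u$, which are automatically matched at $p_k$. Using a partition of unity $(\theta_i)$ subordinate to $(\Omega_i)$, set $\tilde U:=\sum_i \theta_i U_i$: this is a smooth global section extending $u$, and
$$\alpha:=\bar\partial \tilde U=\sum_{i}\bar\partial\theta_i\wedge (U_i-U_{i_0}),$$
where $i_0$ is any fixed index on the support of $\theta_i$. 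Since two local extensions of $u$ coincide on $Y$, the differences $U_i-U_{i_0}$ lie in $\O_X(-Y)$; near a crossing they vanish on each branch separately and are therefore divisible by $z_i^1 z_i^2$.

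\emph{$\bar\partial$-correction and main obstacle.} I would then solve $\bar\partial v=\alpha$ with Hörmander's $L^2$-estimate relative to a twisted weight of the shape
$$\Phi=\varphi_Y+\varphi_L+\psi,\qquad \psi=-\lambda\log\bigl(-\log\max_j|s_j|^2\bigr)+A|z|^2,$$
normalised so that $\ddc\Phi\ge 0$ in the sense permitted by~{\rm (a)} and so that $(\ddc\psi)_+$ absorbs the terms coming from $|\bar\partial\theta_i|^2$. The weight $e^{-\varphi_Y}$ forces any $L^2$ solution $v$ to vanish along $Y$, so $U:=\tilde U-v$ is then a holomorphic extension of $u$. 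The hard part will be to identify the right-hand side of Hörmander's inequality with the bound~(\ref{ot10}). Away from $V_{\rm sing}$ this is the classical OT trade-off: the divisibility $U_i-U_{i_0}\in\O(-Y)$ converts $|U_i-U_{i_0}|^2 e^{-\varphi_Y}$ into $|u/ds|^2$ on $Y$, multiplied by the $\log^2$ factor inherited from $\psi$. Near $V_{\rm sing}$, writing the first-order Taylor expansions of $U_i-U_{i_0}$ at each $p_k$ in the flat gauge provided by~{\rm (ii)} should turn the $\bar\partial$-error into precisely $|\partial_{\varphi_L}u|^2$ along $Y\cap V_{\rm sing}$, multiplied by $\log^2(\max|s_j|^2)$; carrying out this Taylor-type identification cleanly, with the constant depending only on $V_{\rm sing}$, is the main technical step. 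The $V_{\rm sing}$ is excised from the LHS because the extension constructed this way still has an unavoidable logarithmic blow-up at the crossings, reflecting the fact that a section of $\O_X(-Y)$ near a node has $L^2$ norm with respect to $e^{-\varphi_Y}$ which is only logarithmically bounded in annular shells shrinking to $p_k$.
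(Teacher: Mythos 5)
Your route (glue local holomorphic extensions with a partition of unity, then correct by a H\"ormander-type $\bar\partial$-estimate for the weight $\Phi=\varphi_Y+\varphi_L+\psi$) is genuinely different from the paper's proof, which runs Berndtsson's dual scheme: one bounds the functional $\xi\mapsto\int_{Y}\frac{u}{ds_Y}\wedge\overline{\gamma_{\xi_1}}\,e^{-\varphi_L}$ for test forms supported away from $V_{\rm sing}\cup H$, using the log-weighted a-priori inequality of Theorem \ref{BoB++} outside $V_{\rm sing}$, an integration by parts on the branches of $Y$ through each node to trade the pole of $u/ds_Y$ for $\log|z_2|^2\,\partial_{\varphi_L}u$, and --- this is where (ii) enters --- the fact that flatness of $(L,h_L)$ on $V_{\rm sing}$ makes $\partial^\star_{\varphi_L}\xi_1$ harmonic there, so that the differential inequality \eqref{flat4} plus Moser iteration give the mean-value bound \eqref{flat6}, while \eqref{flat7} controls the resulting integral by $\int_X|\bar\partial^\star\xi_1|^2$; the extension with the estimate \eqref{ot10} then comes from the Hahn--Banach/Riesz argument as in Theorem \ref{boestim}.

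As written, however, your proposal has a genuine gap which I do not see how to close along the lines you sketch. The error form $\alpha=\sum_i\bar\partial\theta_i\wedge(U_i-U_{i_0})$ is governed by the differences of arbitrarily chosen local extensions and by the cut-offs, not by the jet of $u$ along $Y$: near $Y$ the quantity $|U_i-U_{i_0}|^2e^{-\varphi_Y}$ measures $|(U_i-U_{i_0})/s_Y|^2$, which has nothing to do with $|u/ds|^2$, and there is no mechanism by which a Taylor expansion of $U_i-U_{i_0}$ at a node in a flat gauge produces the intrinsic term $\int_{Y\cap V_{\rm sing}}\log^2(\max|s_j|^2)\,|\partial_{\varphi_L}u|^2e^{-\varphi_L}$ in the final bound. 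This identification --- which you yourself flag as ``the main technical step'' --- is exactly the content of the theorem; in the paper it is obtained not at the level of the $\bar\partial$-datum but through the integration by parts \eqref{fin25}--\eqref{fin26} against the holomorphic component of $\gamma_{\xi_1}|_{Y_j}$, followed by the sup-estimate for $\partial^\star_{\varphi_L}\xi_1$ that only flatness makes available. Two further problems: the auxiliary weight $\psi=-\lambda\log\bigl(-\log\max_j|s_j|^2\bigr)+A|z|^2$ does not exist globally on a compact projective surface (there is no global strictly plurisubharmonic function playing the role of $A|z|^2$), and hypothesis (a) only gives $\Theta_{h_L}(L)\ge0$ and $\Theta_{h_L}(L)\ge\delta\,\Theta_{h_Y}(Y)$, which is not enough curvature to absorb the negative part of such a twist in a naive H\"ormander estimate; moreover hypothesis (ii) is never really used in your argument (a flat gauge only simplifies notation), whereas in the paper it is the decisive ingredient killing the curvature term $\langle[\bar\partial,\Lambda_{\Theta_{h_L}(L)}]\xi_1,\partial^\star_{\varphi_L}\xi_1\rangle$ that otherwise obstructs the mean-value inequality on $V_{\rm sing}$.
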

\medskip

\noindent Our next statement is confined to the two-dimensional case.
\begin{thm}\label{otflat1}
Let $X$ be a smooth projective surface, and let $(L, h_L)$
be a line bundle such that the usual curvature and $L^2$ conditions {\rm (a)}, {\rm (b)} 
and {\rm (c)} are satisfied, respectively. We assume moreover that the following hold.
\begin{enumerate}
\smallskip

\item[\rm(1)] There exists a component $Y_1$ of $Y$ which intersects $\cup_{j\neq 1} Y_j$ 
in a unique point $p_1$, such that $u(p_1)\neq 0$.
\smallskip

\item[\rm(2)] The restriction $\displaystyle (L, h_L)|_{Y_1}$ is Hermitian flat.
\end{enumerate}  
Then the section $u$ admits an extension $U$ satisfying the same estimates as in Theorem \ref{ot}.
\end{thm}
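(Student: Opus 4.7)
The plan is to follow the $\bar\partial$-method used in the proofs of Theorems~\ref{ot} and~\ref{otflat}, the novelty being that the flatness of $(L, h_L)|_{Y_1}$ cancels the would-be non-integrable contribution at $p_1$ arising from $u(p_1)\neq 0$.

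First, I would analyse the local geometry near $p_1$. Since $Y_1$ meets the other components of $Y$ only at $p_1$, choose coordinates $(z_1, z_2)$ with $Y_1 = \{z_1 = 0\}$ and with the unique other component of $Y$ through $p_1$ equal to $\{z_2 = 0\}$. The flatness of $h_L|_{Y_1}$ allows us to pick a holomorphic frame $e_L$ of $L$ on a neighborhood $V_1$ of $p_1$ whose restriction to $Y_1$ is parallel for the flat metric; in such a frame the weight $\varphi_L = -\log|e_L|^2_{h_L}$ vanishes identically along $Y_1 \cap V_1$. With respect to this frame and the canonical trivialisation of $K_X + Y$, write $u = g(z_1, z_2)\, dz_1 \wedge dz_2/(z_1 z_2)$ with $g$ holomorphic modulo $(z_1 z_2)$ and $g(p_1) = u(p_1) \neq 0$. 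This yields a local holomorphic extension $\widetilde U_{\rm loc}$ of $u$ on $V_1$.

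Next, construct a global smooth extension $\widetilde U$ of $u$ on $X$ by patching $\widetilde U_{\rm loc}$ near $p_1$ with the local holomorphic extensions provided by hypothesis~(c) elsewhere, using a partition of unity $(\chi_i)$. The $\bar\partial$-defect $\alpha := \bar\partial \widetilde U = \sum_{i,j} \bar\partial \chi_i \wedge (\widetilde U_i - \widetilde U_j)$ is supported on the transition regions, and on each overlap $\widetilde U_i - \widetilde U_j$ lies in $\mathcal O(-Y)$, so $\alpha$ vanishes on $Y$. Then solve $\bar\partial v = \alpha$ by H\"ormander's method with the twisted plurisubharmonic weight $\varphi_Y + \varphi_L + \psi$, where $\psi$ is a Berndtsson-type twist involving $\log^2|s|^2$ as in the proofs of Theorems~\ref{ot} and~\ref{otflat}. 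The choice of $\psi$ forces $v|_Y = 0$ and produces the $\log^2(\max|s_j|^2)$ factor appearing in the right-hand side of~\eqref{log}. Setting $U := \widetilde U - v$ yields a holomorphic extension of $u$ with the desired estimate.

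The main difficulty will be the $L^2$ control of $\alpha$ in a neighborhood of $p_1$. Concretely, one must establish
\[
\int_{V_1} |\alpha|^2 e^{-\varphi_Y - \varphi_L - \psi} \leq C \int_{Y \cap V_1} \log^2(\max|s_j|^2) |\partial_{\varphi_L} u|^2 e^{-\varphi_L}.
\]
The flatness of $(L, h_L)|_{Y_1}$ is indispensable here: the difference of two local holomorphic extensions of $u$ near $p_1$ lies in $\mathcal O(-Y)$, but for $\bar\partial \chi \wedge (\widetilde U_i - \widetilde U_j)$ to be dominated by the Chern derivative $\partial_{\varphi_L} u$ rather than by $u$ itself, the weight $\varphi_L$ must vanish along $Y_1$; this is exactly what the flat frame provides. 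Without this cancellation, a non-integrable contribution of order $|u(p_1)|^2/|z|^2$ would appear, spoiling the estimate.
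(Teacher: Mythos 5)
There is a genuine gap, and it sits exactly at the step you flag as ``the main difficulty''. Your error term $\alpha=\sum\bar\partial\chi_i\wedge(\widetilde U_i-\widetilde U_j)$ is built from the local extensions $U_i$ furnished by hypothesis (c), and those come with \emph{no norm control whatsoever}: hypothesis (c) only asserts membership in the multiplier ideal. The quantity you need to dominate it by, namely $\int_{Y\cap V_1}\log^2(\max|s_j|^2)\,|\partial_{\varphi_L}u|^2e^{-\varphi_L}$, is intrinsic to the trace $u|_Y$, so the displayed inequality cannot hold with a constant $C(X,V_{\rm sing})$ independent of the extension data; the cut-off errors live on overlaps of the covering, away from $Y$, and nothing in your construction forces only boundary data of $u$ to survive (contrast with the paper, where either the concentrating cut-off $\rho_\ep(|s_Y|^2/\ep^2)$ or the duality functional guarantees this). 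The flatness mechanism you invoke is also not the right one: the difference of two holomorphic extensions lies in $\mathcal O(-Y)$ in \emph{any} frame, so the alleged non-integrable term $|u(p_1)|^2/|z|^2$ never comes from $\bar\partial\chi\wedge(\widetilde U_i-\widetilde U_j)$, and choosing a frame in which $\varphi_L$ vanishes along $Y_1$ near $p_1$ is a purely local normalization available whenever the curvature vanishes locally -- under such weak hypotheses the statement is not known (it is essentially Conjecture \ref{cot}). The true difficulty created by $u(p_1)\neq 0$ is that $u/ds_Y|_{Y_1}$ has a log pole at $p_1$, so $\int_{Y_1}|u/ds_Y|^2e^{-\varphi_L}$ diverges and the standard estimate is useless; your scheme never addresses this. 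Tellingly, your argument makes no essential use of either hypothesis (1) (that $Y_1$ meets $\bigcup_{j\neq1}Y_j$ in a \emph{single} point) or of the \emph{global} flatness of $(L,h_L)|_{Y_1}$, both of which are indispensable.

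For comparison, the paper argues by duality as in Theorems \ref{ot} and \ref{otflat}: one must estimate $\int_{Y_j}\frac{u}{\sigma_j\,ds_{Y_j}}\wedge\overline{\gamma_{\xi_1}}\,e^{-\varphi_L}$ for the test forms $\xi$. On each component one decomposes $\gamma_{\xi_1}|_{Y_j}=\alpha_j+\bar\partial^\star\beta_j$ with $\beta_j=f_j\,\omega|_{Y_j}$; the pairing of the log-polar form $u/(\sigma_j ds_{Y_j})$ with $\bar\partial^\star\beta_j$ is computed by the residue formula and produces the terms $f_u(x)\overline{f_j(x)}e^{-\varphi_L(x)}$ at the points $x\in Y_{\rm sing}\cap Y_j$, which cannot be controlled. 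Hypothesis (1) guarantees that on $Y_1$ there is exactly one such residue, at $p_1$, and the flatness of $(L,h_L)|_{Y_1}$ provides a \emph{global} parallel section $\tau$ of $L|_{Y_1}$ with $\tau(p_1)\neq0$ and $\bar\partial^\star(\tau\omega)=0$; adding a suitable multiple of $\tau\omega$ to $\beta_1$ leaves $\bar\partial^\star\beta_1$ unchanged but normalizes $f_1(p_1)=0$, killing the single residue. After this the pairing is handled as in Theorem \ref{ot} (the $\log|z|^2$ integration by parts yielding the $\partial_{\varphi_L}u$ terms), and the a-priori inequality of Section 3 gives the estimate. Your proposal would need an analogous mechanism tying the estimate to $u|_Y$ alone and exploiting the single intersection point together with a flat section on all of $Y_1$; as written, it does not contain one.
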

\medskip

\noindent The \emph{raison d'\^etre} of the previous theorems
\ref{ot}, \ref{otflat} and \ref{otflat1} is that
the inequality \eqref{ot10} is meaningful even in the absence of the additional hypothesis 
these statements contain. Because of the variety of contexts in which
an extension of $u$ verifying the estimates of type (2) of Theorem \ref{otflat}
can be obtained,
it is very tempting to formulate the following.
\begin{conjecture}\label{cot}
  Let $(X, Y)$ be a smooth projective pair, where $X$ is a surface and
$Y$ is an snc divisor. Let $(L, h_L)$
  be a line bundle such that the properties {\rm (a)}, {\rm (b)} 
and {\rm (c)} are satisfied. Then there
  exists a holomorphic section $U$ of the bundle $K_X+ Y+ L$ enjoying the following properties.
\begin{enumerate}

\item[\rm(1)] The section $U$ is an extension of $u$. 
  \smallskip
  
\item[\rm(2)] We have
\begin{align}\label{ot11}\nonumber
 \frac{1}{C(X, V_{\rm sing})}\int_{X\setminus V_{\rm sing}}|U|^2e^{-\varphi_Y-\varphi_L}\leq & 
  \int_{Y\setminus V_{\rm sing}}\log^2(\max|s_{j}|^2)\left|\frac {u}{ds}\right|^2e^{-\varphi_L}\nonumber \\
+ & \lim_{\ep\to 0}\int_{Y\cap V_{\rm sing}}\log^2(\max|s_{j}|^2)
  |\partial_{\varphi_{L, \ep}} u|^2e^{-\varphi_{L, \ep}},
\end{align}
where $\displaystyle \varphi_{L, \ep}:= \log\left(\ep^2e^{\phi}+ e^{\varphi_L}\right)$
is a non-singular approximation of $h_L$.
\end{enumerate}
\end{conjecture}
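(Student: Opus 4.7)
The plan is to reduce Conjecture \ref{cot} to Theorem \ref{otflat} by a regularisation procedure applied to the metric $h_L$. Replace $\varphi_L$ by the smooth approximation $\varphi_{L,\ep} = \log(\ep^2 e^\phi + e^{\varphi_L})$ appearing in the statement. This weight decreases to $\varphi_L$ as $\ep \to 0$, is bounded, and its curvature differs from $\ddbar\varphi_L$ by an $\ep$-independent error controlled by $\ddbar \phi$. The goal is to prove the $\ep$-version of \eqref{ot11}, obtained by replacing $\varphi_L$ with $\varphi_{L,\ep}$ everywhere, with a constant uniform in $\ep$, and then to let $\ep \to 0$.

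Fix $\ep > 0$ and adapt the proof scheme used for Theorems \ref{ot} and \ref{otflat} to produce an extension $U_\ep$ of $u$ associated with the regularised metric $h_{L,\ep} = e^{-\varphi_{L,\ep}}$. Using hypothesis (c), the local extensions $U_i$ can be patched together via the partition of unity $(\theta_i)$ into a smooth global extension $\wt U$ of $u$; correct $\wt U$ by solving a $\dbar$-equation $\dbar v_\ep = \dbar(\chi \wt U)$ with a Donnelly--Fefferman-type weight of the form $\log^2(\max_j|s_j|^2)\, e^{-\varphi_Y - \varphi_{L,\ep}}$, where $\chi$ is a suitable cutoff supported on a tube around $Y$. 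The insertion of $\log^2$ is exactly what converts the remainder arising from integration by parts into the quantity $|\partial_{\varphi_{L,\ep}} u|^2$ appearing on the RHS of \eqref{ot11}; this is the mechanism already present in the proof of Theorem \ref{otflat}. Once a uniform estimate of this form is in place, the endgame is routine: monotonicity of $\varphi_{L,\ep}$ gives locally uniform $L^2$-bounds on $(U_\ep)$ over $X \setminus V_{\rm sing}$, a weak-compactness argument extracts a holomorphic limit $U$ whose restriction to $Y$ is still $u$ (the smooth reference $\wt U$ being a common trace), and Fatou's lemma on the left together with the definition of the limit on the right yields \eqref{ot11}.

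The main obstacle lies entirely in the first step: obtaining an $\ep$-independent constant without assuming flatness of $h_{L,\ep}$ near $Y_{\rm sing}$. In Theorem \ref{otflat}, the flatness hypothesis (ii) is used crucially to kill an integration-by-parts term of the form $\int \ddbar\varphi_{L,\ep}\cdot|\wt U|^2 \log^2|s|^2$ produced by the twisted $L^2$ estimate; in the general setting of the conjecture this term is a priori of order $\ep^{-2}$ and does not cancel against the Donnelly--Fefferman weight by the standard manipulation. Overcoming this will require either a refined cutoff $\chi$ tailored to the level sets of $\varphi_{L,\ep}$, or an a priori bound on $\int \ddbar\varphi_{L,\ep} \log^2|s|^2$ uniform in $\ep$ that couples to the Donnelly--Fefferman weight via an absorption inequality. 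The very appearance of the $\lim_{\ep\to 0}$ on the RHS of \eqref{ot11} suggests that such a bound should exist and is indeed the content of the conjecture; but making it precise, and compatible with the construction of $U_\ep$, is precisely the step that flatness circumvents in Theorem \ref{otflat} and which we expect to be the essential difficulty.
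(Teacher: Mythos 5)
The statement you are addressing is Conjecture \ref{cot}: the paper does not prove it, and in fact the whole point of Theorems \ref{ot}, \ref{otflat} and \ref{otflat1} is that the estimate \eqref{ot11} is only established under additional hypotheses (vanishing of $u$ at $Y_{\rm sing}$ with $h_L$ smooth, flatness of $(L,h_L)$ near $Y_{\rm sing}$, or the special geometric configuration of Theorem \ref{otflat1}), while the appendix merely gives evidence that the shape of \eqref{ot11} is optimal. Your proposal does not close this gap, and you say so yourself in the last paragraph: everything hinges on an $\ep$-uniform a priori estimate near $Y_{\rm sing}$ for the regularised weights $\varphi_{L,\ep}$, and that estimate is exactly the content of the conjecture, not a consequence of any result available in the paper. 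Concretely, in the paper's own scheme the missing ingredient is a mean value inequality of the type \eqref{flat6} for $|\partial^\star_{\varphi_{L,\ep}}\xi_1|^2$ with a constant independent of $\ep$; without the flatness hypothesis (ii) the Bochner-type computation produces the term $\bigl\langle [\dbar,\Lambda_{\Theta_{h_{L,\ep}}(L)}]\xi_1,\ \partial^\star_{\varphi_{L,\ep}}\xi_1\bigr\rangle$, which the authors explicitly single out as the obstruction (``seems impossible to manage''), since $\Theta_{h_{L,\ep}}(L)$ is not uniformly bounded near the singularities of $h_L$ as $\ep\to 0$. Your Donnelly--Fefferman variant hits the same wall in a different guise: the term you describe as ``a priori of order $\ep^{-2}$'' is precisely this unbounded curvature contribution, and neither a cutoff adapted to the level sets of $\varphi_{L,\ep}$ nor an absorption inequality is exhibited, so the first (and only nontrivial) step of your plan is missing.

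Two further cautions if you pursue this. First, the patching-plus-$\dbar$-correction route you sketch is genuinely different from the paper's method (a-priori inequality for $(n,1)$-forms \`a la Theorem \ref{bobapprox}, orthogonal decomposition $\xi=\xi_1+\xi_2$, and pointwise control of $\xi_1$ or $\partial^\star_{\varphi_L}\xi_1$ near $Y_{\rm sing}$), but the difficulty is not an artifact of the paper's method: the appendix examples (Claims 3 and 4) show that any argument must use the full singular weight $e^{-\varphi_L}$ and the $\log^2$ factor, so the uniform estimate cannot be obtained by softening the weight. Second, your endgame implicitly assumes that the quantity $\int_{Y\cap V_{\rm sing}}\log^2(\max|s_j|^2)\,|\partial_{\varphi_{L,\ep}}u|^2e^{-\varphi_{L,\ep}}$ stays bounded along the regularisation and that its limit controls the limiting extension; this is plausible and consistent with how the conjecture is phrased, but it is not justified by anything in your argument, and monotonicity of $\varphi_{L,\ep}$ alone does not give it, since $\partial_{\varphi_{L,\ep}}u$ changes with $\ep$ in a non-monotone way. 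As it stands, the proposal is a reduction of the conjecture to its own essential difficulty rather than a proof.
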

\medskip

\noindent In the sequel of this article we will formulate the precise higher dimensional version of this conjecture, and we will explain its impact on the extension of the pluricanonical forms.

\noindent In the appendix A by Bo Berndtsson some examples are given that indicate that the estimates (2) in Conjecture \ref{cot}
are most likely the best one could hope for: without the $\log$ factor, this conjecture is simply wrong. Moreover, the example given in order to prove
Claim 4 shows that the factor $e^{-\varphi_L}$ in \eqref{qi1}
cannot be replaces by the slightly less singular weight $e^{-(1-\ep)\varphi_L}$,
for any $\ep> 0$. Finally, the appendix contains a comparison with a one-dimensional problem (the \emph{fat point}), intended to highlight the origin of the
difficulties in a very simple setting. 
\medskip

\subsection{Organization of the paper} In the second section we explain the
main ideas involved in the proof of our results. The next section is dedicated to the 
revision and slight improvement of the usual a-priori inequalities. Our principal contribution to the
\emph{Ohsawa-Takegoshi artisan industry} is in section four, where the necessary
tools from geometric analysis are recalled/developed. The proof of the results stated above is presented in section five.

\medskip

\section{An overview of the arguments} 

\noindent Our results are obtained by combining the method in \cite{OT} with the method in \cite{BB1}. In order to highlight the main arguments as well as some of the difficulties, we only discuss here the case of a
non-singular metric $h_L$ on $L$. In general the whole scheme of the proof becomes more technical, since the regularisation procedure we have to use for the metric is quite tricky to implement in the presence of the singular hypersurface $Y$. 

\noindent We start with a quick review of the usual case.

\subsubsection{The case of a non-singular hypersurface $Y$}
Let $\xi$ be a $L$-valued form of type $(n,1)$. We denote by
$\gamma_\xi:= \star \xi$ its Hodge dual (induced by an arbitrary K\"ahler form on $X$).

\noindent Consider the functional
\begin{equation}\label{eq130}
{\mathcal F}(\xi)=  \int_X\overline\partial
\left(\frac{u}{s_Y}\right)\wedge \overline{\gamma_\xi}e^{-\varphi_L}
\end{equation}
associated to the current $\displaystyle \overline\partial
\left(\frac{u}{s_Y}\right)$. 

We decompose $\xi= \xi_1+ \xi_2$ according to
$\Ker(\overline\partial)$ and $\Ker(\overline\partial)^\perp$. It turns out that  we have the equality
\begin{equation}\label{eq131}
{\mathcal F}(\xi)= \int_Y
\frac{u}{ds_Y}\wedge \overline{\gamma_{\xi_1}}e^{-\varphi_L}
\end{equation}
which is not completely obvious, given that the current defining $\cF$ is not in $L^2$.

\smallskip

\noindent We have $\displaystyle \frac{u}{ds_Y}\in L^2(e^{-\varphi_L})$, hence it is enough to find an upper bound for 
\begin{equation}\label{eq132} c_{n-1}\int_Y
  \gamma_{\xi_1}\wedge \overline{\gamma_{\xi_1}}e^{-\varphi_L}.
\end{equation}
This is done by the next estimate, which is derived in \cite{BB3} via the $\partial\overline\partial$--Bochner method due to Siu cf. \cite{Siu1}
\begin{equation}\label{eq133}
  c_{n-1}\int_Y\gamma_{\xi_1}\wedge \overline {\gamma_{\xi_1}}e^{-\varphi_L}\leq C\int_X\log^2(|s_Y|^2)|\overline\partial^\star{\xi_1}|^2e^{-\varphi_L}dV_\omega.
  \end{equation}
  \smallskip

  \noindent In conclusion we have
  
\begin{equation}\label{eq134}\left|\int_X\overline\partial
  \left(\frac{u}{s_Y}\right)\wedge \overline{\gamma_\xi}e^{-\varphi_L}\right|^2
\leq C\int_X\log^2(|s_Y|^2)|\overline\partial^\star{\xi}|^2e^{-\varphi_L}dV_\omega
\end{equation}
and the ``estimable'' extension will be obtained by using the solution of the equation
$\displaystyle \overline\partial
\left(\frac{u}{s_Y}\right)= \overline\partial v$. We define
$U:= s_Yv$ and then we have
\begin{equation}\label{eq135}
U|_{Y}= u\wedge ds_Y, \qquad
\int_X\frac{|U|^2}{|s|^2\log^2|s|^2}e^{-\varphi_Y-\varphi_L}\leq C
\int_Y|u|^2e^{-\varphi_L}.
\end{equation}
\medskip

\subsubsection{Difficulties in the case of an snc hypersurface $Y$} In our setting we have $Y= \bigcup Y_i$, and the difficulty steams from the fact that the functional
\begin{equation}\label{eq136}\int_Y
  \frac{u}{ds_Y}\wedge \overline{\gamma_{\xi_1}}e^{-\varphi_L}= \sum_i
\int_{Y_i}
\frac{u}{\prod_{j\neq i} s_j ds_{Y_i}}\wedge \overline{\gamma_{\xi_1}}e^{-\varphi_L}
\end{equation}
becomes a sum of expressions involving forms with log poles. We have
\begin{equation}\label{eq137} \frac{u}{\prod_{j\neq i} s_j ds_{Y_i}}\not\in L^2(e^{-\varphi_L}|_{Y_i})
\end{equation}  
in general, so the previous arguments are breaking down.
\smallskip

\noindent Nevertheless we do have
\begin{equation}\label{eq138}
\left|\frac{u}{ds_Y}\right|_{\omega}^{\frac{2}{1+\alpha}}\in L^1(Y, \omega|_{Y})
\end{equation}
near the singularities of $Y$ for any reasonable metric $\omega$. This means that we have to find an estimate of
the $L^\infty$ norm of $\displaystyle \gamma_{\xi_1}|_{V_{\rm sing}\cap Y}$ in terms of the RHS of
\eqref{eq133}.

To this end, we use a procedure due to Donaldson-Sun in \cite{DS}. This consists in the following simple observation.
Assume that the support of $\xi$ is contained in $X\setminus V_{\rm sing}$. Then we have
\begin{equation}\label{eq139}
\dbar \xi_1= 0,\qquad  \dbar^\star\xi_1|_{V_{\rm sing}}= 0
\end{equation}
in other words, the restriction of 
$\xi_1$ to $V_{\rm sing}$ is harmonic. As we learn from function theory,
harmonic functions satisfy the mean value inequality: this is what we implement in our context, and it leads to the proof of Theorem \ref{otconique}.
\smallskip

\noindent The drawback of this method is that in the end, the constant measuring the $L^2$ norm of the extension is far from being as universal as in the case $Y_{\rm sing}= \emptyset $. This is due to the fact that the quantity
$\displaystyle \Delta''|\xi_1|^2$ has a term \emph{with wrong sign} involving the trace of the
curvature of $(L, h_L)$ with respect to the metric $\omega$ on $X$. This trace
{is not bounded} e.g. if $h_L$ is singular and $\omega$ is a fixed, non-singular
K\"ahler metric. It is for this reason that the singularities of $h_L$ and
those of $\omega_{\mathcal C}$ must be the same in Theorem \ref{otconique}.  
\medskip

\section{A-priori inequalities revisited}

\noindent We first recall the following estimate, which is essentially due to \cite{BB3}.

\medskip

\begin{thm}\label{bobapprox} Let $(X, \omega)$ be a K\"ahler manifold, and $Y$ be simple normal crossing divisor in $X$.
Let $L$ be a line bundle on $X$ with a non-singular metric $h_L$ such that 
$$\Theta_{h_L} (L)\geq 0,\qquad  \Theta_{h_L} (L) \geq \delta \Theta_{h_Y} (Y) $$
for some $\delta >0$ small enough, where $h_Y$ is a smooth metric on $\mathcal{O}_X (Y)$ such that $\displaystyle |s_Y|^2_{h_Y}\leq e^{-\delta}$.
Let $\xi$ be a smooth $(n, 1)$ form with compact support and with values in $L$. We denote by $\gamma_\xi:= \star \xi$ the image of $\xi$ by the Hodge operator. Then 
we have
\begin{align}
  c_{n-1}\int_X&\frac{\tau^2}{(\tau^2+ |s_Y|^2)^2}\gamma_\xi\wedge \ol {\gamma_\xi}e^{-\varphi_L}\wedge
  \sqrt{-1}\partial s_Y\wedge\ol{\partial s_Y} \nonumber \\
\label{eq012}  \leq & C\int_X\log^2(\tau^2+ |s_{Y}|^2)\left(|\dbar^\star\xi|^2+ |\dbar\xi|^2\right)e^{-\varphi_L}dV_\omega \nonumber \\
\nonumber       
\end{align}
where $C$ is a numerical constant and $\tau$ is an arbitrary real number.
\end{thm}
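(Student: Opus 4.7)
The plan is to apply a twisted Bochner-Kodaira-Nakano inequality in the spirit of Siu's $\partial\bar\partial$-Bochner method, as implemented by Berndtsson in \cite{BB3}, with the specific twist function
\[
\eta := \log^2\bigl(\tau^2+|s_Y|^2_{h_Y}\bigr),
\]
which is smooth and positive thanks to the normalisation $|s_Y|^2_{h_Y}\leq e^{-\delta}<1$. A standard density argument first reduces matters to the case where $\xi$ is smooth with compact support and $\tau\neq 0$. Setting $T:=\tau^2+|s_Y|^2_{h_Y}$, the next step will be to expand $-i\partial\bar\partial\eta$ using the Poincaré-Lelong identity $i\partial\bar\partial\log|s_Y|^2_{h_Y}=-i\Theta_{h_Y}(Y)$ together with the auxiliary calculation
\[
i\partial\bar\partial\log T = -\frac{|s_Y|^2_{h_Y}}{T}\,i\Theta_{h_Y}(Y)+\frac{\tau^2}{T^2}\cdot i\partial s_Y\wedge\overline{\partial s_Y}+(\text{lower order}).
\]
Collecting contributions, $-i\partial\bar\partial\eta$ will decompose into three pieces: (i) the desired positive piece $2|\log T|\,\frac{\tau^2}{T^2}\,i\partial s_Y\wedge\overline{\partial s_Y}$, whose pairing with $\xi$ reproduces exactly the LHS of \eqref{eq012}; (ii) a wrong-sign curvature contribution proportional to $|\log T|\,\frac{|s_Y|^2_{h_Y}}{T}\,i\Theta_{h_Y}(Y)$; and (iii) a quadratic gradient error $-2\,i\partial\log T\wedge\bar\partial\log T$.

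Next I will invoke the twisted Bochner-Kodaira-Nakano inequality for $\xi$ with twist $\eta$ and auxiliary parameter $\mu>0$:
\[
\int_X\bigl\langle\bigl(\eta\,i\Theta_{h_L}(L)-i\partial\bar\partial\eta-\mu^{-1}\,i\partial\eta\wedge\bar\partial\eta\bigr)\Lambda_\omega\xi,\,\xi\bigr\rangle e^{-\varphi_L}dV_\omega\leq \int_X(\eta+\mu)\bigl(|\bar\partial^\star\xi|^2+|\bar\partial\xi|^2\bigr)e^{-\varphi_L}dV_\omega.
\]
Since $i\partial\eta\wedge\bar\partial\eta=4\log^2 T\cdot i\partial\log T\wedge\bar\partial\log T$, taking $\mu$ of order unity will keep $\eta+\mu\leq C\log^2 T$ and force the term $-\mu^{-1}\,i\partial\eta\wedge\bar\partial\eta$ to combine with (iii) in a controlled manner. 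The curvature hypothesis $i\Theta_{h_L}(L)\geq \delta\,i\Theta_{h_Y}(Y)$ then yields
\[
\eta\,i\Theta_{h_L}(L)\geq\delta\log^2 T\cdot i\Theta_{h_Y}(Y),
\]
which will absorb the wrong-sign piece (ii) as soon as $\delta|\log T|\geq 2$; this can be secured globally by a preliminary rescaling $h_Y\mapsto c\cdot h_Y$ with $c$ sufficiently small, the multiplicative cost being absorbed into the final constant $C$. Only the positive piece (i) then survives on the left-hand side, giving the inequality \eqref{eq012}.

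The main obstacle will be the delicate bookkeeping of the competing positive and negative components of $-i\partial\bar\partial\eta$ and of the quadratic gradient error $i\partial\eta\wedge\bar\partial\eta$, in particular preserving the full factor $\tau^2/T^2$ on the LHS while absorbing the $|\log T|$-weighted errors into the RHS. The curvature hypothesis enters in an essential way through the availability of a positive multiple of $\log^2 T\cdot i\Theta_{h_Y}(Y)$ capable of neutralising the wrong-sign curvature contribution; without such a hypothesis the method breaks down completely, which is consistent with Remark \ref{rk1}.
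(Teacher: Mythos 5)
Your plan runs into a genuine gap at the absorption step, and it is exactly the difficulty the paper's two-step argument is designed to circumvent. Write $T=\tau^2+|s_Y|^2_{h_Y}$, $t=\log T$, so your twist is $\eta=t^2$. Since $\eta$ is a \emph{convex} function of $t$, one has $-\sqrt{-1}\ddbar \eta=-2t\,\sqrt{-1}\ddbar t-2\sqrt{-1}\partial t\wedge\dbar t$; the first part indeed produces your pieces (i) and (ii) via \eqref{eq15}, but the second part is a \emph{negative} gradient term, and the twisted inequality you invoke adds the further negative term $-\mu^{-1}\sqrt{-1}\partial\eta\wedge\dbar\eta=-4\mu^{-1}t^2\,\sqrt{-1}\partial t\wedge\dbar t$. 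Now $\sqrt{-1}\partial t\wedge\dbar t=\frac{|s_Y|^2}{T^2}\,\sqrt{-1}\partial s_Y\wedge\ol{\partial s_Y}$ (same notation as in the statement), so these negative contributions carry the weight $\frac{|s_Y|^2}{T^2}$, which for small $\tau$ behaves like $|s_Y|^{-2}$ off a thin tube around $Y$, whereas your good piece (i) only carries $|t|\frac{\tau^2}{T^2}$ and the curvature term $\eta\,\sqrt{-1}\Theta_{h_L}(L)$ is merely $\log^2 T$ times a fixed form. Hence pointwise absorption is possible only where $|s_Y|^2\lesssim \tau^2|\log T|$, and on the rest of $X$ the form $\eta\,\sqrt{-1}\Theta_{h_L}(L)-\sqrt{-1}\ddbar\eta-\mu^{-1}\sqrt{-1}\partial\eta\wedge\dbar\eta$ is violently negative in the $\partial s_Y$-direction; no choice of the constant (or even function) $\mu$ helps, because the term $-2\sqrt{-1}\partial t\wedge\dbar t$ is present before $\mu$ enters. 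So the phrase ``combine with (iii) in a controlled manner'' is precisely the missing argument: your plan offers no mechanism to bound these terms by the right-hand side of \eqref{eq012}. (Your treatment of the wrong-sign curvature piece by rescaling $h_Y$ is fine and matches the paper's normalization; that is not the issue.)

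The paper avoids this by never putting the squared logarithm inside the Hessian. It multiplies Siu's $\ddbar$-Bochner identity \eqref{eq14} by the first power $w=\log\frac{1}{T}$, so the good term comes straight out of \eqref{eq15} and the dangerous gradient expression appears only as a Cauchy--Schwarz cross-term error in \eqref{eq19}, where it arrives with an extra factor $1/w^2$, i.e. as $\frac{\sqrt{-1}\partial w\wedge\dbar w}{w^2}=\sqrt{-1}\partial w_1\wedge\dbar w_1$ with $w_1=\log w$; the companion term $\int w^2|\partial_\varphi\gamma_\xi|^2$ is what generates the $\log^2$ weight on the right. This smaller error is then estimated by running the \emph{same} argument a second time with the concave twist $w_1$ (see \eqref{eq21}--\eqref{est100}), where the new Cauchy--Schwarz error is literally the same integral and is absorbed. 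If you want a one-shot twisted inequality, the twist must be concave in $w$ (of type $\log w$), which makes the gradient term come out with the favorable sign; $\eta=w^2$ has the wrong convexity, and that is the source of the gap.
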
 
\medskip

\noindent Before giving the proof of Theorem \ref{bobapprox} we notice
that it implies the following statement.

\begin{thm}\label{BoB++} Let $(X, \omega)$ be a K\"ahler manifold, and $Y$ be simple normal crossing divisor in $X$.
Let $L$ be a line bundle on $X$ with a non-singular metric $h_L$ such that 
$$\Theta_{h_L} (L)\geq 0,\qquad  \Theta_{h_L} (L) \geq \delta \Theta_{h_Y} (Y) $$
for any $\delta >0$ small enough, where $h_Y$ is a smooth metric on $\mathcal{O}_X (Y)$.
Let $\xi$ be a smooth $(n, 1)$ form with compact support and with values in $L$. We denote by $\gamma_\xi:= \star \xi$ the image of $\xi$ by the Hodge operator. Then 
we have
\begin{equation}\label{eq12}
c_{n-1}\int_Y\gamma_\xi\wedge \ol {\gamma_\xi}e^{-\varphi_L}\leq C\int_X\log^2(|s_{Y}|^2)\left(|\dbar^\star\xi|^2+ |\dbar\xi|^2\right)e^{-\varphi_L}dV_\omega
\end{equation}
where $s_{Y}$ is the canonical section of $\O(Y)$, normalized in a way that works for the proof.
\end{thm}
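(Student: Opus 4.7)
The plan is to obtain \eqref{eq12} from the family of estimates \eqref{eq012} in Theorem~\ref{bobapprox} by letting the auxiliary parameter $\tau \to 0^+$. I would first normalise the canonical section $s_Y$ of $\mathcal{O}_X(Y)$ so that $|s_Y|^2 \leq e^{-\delta}$ pointwise on $X$, which is the normalisation alluded to in the statement.

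For the right-hand side of \eqref{eq012}, this normalisation guarantees that, for $\tau$ small enough, $\log(\tau^2 + |s_Y|^2) < 0$ and decreases to $\log |s_Y|^2$ as $\tau \searrow 0$, so $\log^2(\tau^2+|s_Y|^2)$ increases pointwise to $\log^2(|s_Y|^2)$. The monotone convergence theorem then gives
\[
\int_X \log^2(\tau^2+|s_Y|^2)(|\dbar^\star\xi|^2 + |\dbar\xi|^2) e^{-\varphi_L} dV_\omega \,\nearrow\, \int_X \log^2(|s_Y|^2)(|\dbar^\star\xi|^2 + |\dbar\xi|^2) e^{-\varphi_L} dV_\omega,
\]
because the factor $(|\dbar^\star\xi|^2+|\dbar\xi|^2)e^{-\varphi_L}$ is non-negative and independent of $\tau$.

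For the left-hand side, the crucial input is that the family of smooth $(1,1)$-forms
\[
T_\tau := \frac{\tau^2\, i\,\partial s_Y \wedge \overline{\partial s_Y}}{(\tau^2+|s_Y|^2)^2}
\]
is the standard regularisation of the current of integration $[Y]$: in any local holomorphic trivialisation of $\mathcal{O}_X(Y)$ one directly verifies the identity $T_\tau = i\partial\bar\partial \log(\tau^2+|s_Y|^2)$, and as $\tau \to 0^+$ the Lelong--Poincar\'e formula yields $T_\tau \rightharpoonup 2\pi\,[Y] = 2\pi\sum_i[Y_i]$ in the weak sense of currents. This may be checked by hand in simple normal crossings coordinates with $s_Y = z_1 \cdots z_p$: transverse integration in each $z_j$-direction reduces to the elementary identity $\int_{\mathbb{C}} \tau^2\, i\, dz\wedge d\bar z/(\tau^2+|z|^2)^2 = 2\pi$, and the vanishing of $\partial s_Y$ on $Y_{\rm sing}$ prevents any spurious mass from being deposited on the singular locus. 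Since $\xi$ is smooth with compact support and $h_L$ is non-singular, $\gamma_\xi \wedge \overline{\gamma_\xi} e^{-\varphi_L}$ is a smooth, compactly supported test $(n-1,n-1)$-form and the weak convergence yields
\[
\lim_{\tau \to 0^+}\int_X T_\tau \wedge \gamma_\xi \wedge \overline{\gamma_\xi} e^{-\varphi_L} = 2\pi \int_Y \gamma_\xi \wedge \overline{\gamma_\xi} e^{-\varphi_L}.
\]
Combining this with the monotone limit for the right-hand side gives \eqref{eq12}, the multiplicative factor $2\pi$ being absorbed into the constant $C$.

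The technical point I expect will require most care is the interpretation of $|s_Y|^2$ in \eqref{eq012}: if it is read as $|s_Y|^2_{h_Y}$ with respect to the smooth background metric $h_Y$, then the identity $T_\tau = i\partial\bar\partial \log(\tau^2+|s_Y|^2)$ acquires a correction term proportional to the smooth Chern form $\Theta_{h_Y}(Y)$, bounded uniformly in $\tau$. Such a correction contributes only a smooth $(1,1)$-form to the weak limit and can either be computed away by working in a trivialisation where $h_Y$ is locally trivial and then gluing via a partition of unity, or absorbed using the curvature hypothesis $\Theta_{h_L}(L) \geq \delta \Theta_{h_Y}(Y)$ of condition (a).
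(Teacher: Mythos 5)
Your proposal is correct and takes essentially the same route as the paper: Theorem \ref{BoB++} is deduced from Theorem \ref{bobapprox} by letting $\tau\to 0^+$, the paper merely leaving the limiting details implicit. Your treatment of the limit (monotone convergence of $\log^2(\tau^2+|s_Y|^2)$ on the right-hand side, weak convergence of $\frac{\tau^2\,\sqrt{-1}\,\partial s_Y\wedge\overline{\partial s_Y}}{(\tau^2+|s_Y|^2)^2}$ to a multiple of the integration current $[Y]$ via Lelong--Poincar\'e on the left, with the smooth $\Theta_{h_Y}(Y)$-correction from the metric cancelling in the limit) supplies exactly the missing details.
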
 
\medskip

\begin{proof}[Proof of Theorem {\rm \ref{bobapprox}}]  
  We note that this improves slightly the estimate of Bo Berndtsson in \cite{BB3}, but the proof is virtually the same. Nevertheless, we will provide a complete treatment for the convenience of the reader.
\medskip

\noindent To start with, we recall the following ``$\ddbar$-Bochner formula''.

\begin{lemma}[\cite{Siu0}]\label{Siu}
  Let $\xi$ be a $(n, 1)$--form with values in $(L, h_L)$ and compact support. We denote by $\gamma_\xi= \star \xi$ the Hodge $\star$ of $\xi$ with respect to a K\"ahler metric $\omega$. Let
\begin{equation}\label{eq13}
T_\xi:= c_{n-1}\gamma_\xi\wedge \ol \gamma_{\xi}e^{-\varphi}
\end{equation}
be the $(n-1, n-1)$--form on $X$ corresponding to $\xi$, where $c_{n-1}=
\sqrt{-1}^{(n-1)^2}$ is the usual constant. Then we have the equality
\begin{align}\label{eq14}
  \sqrt{-1}\ddbar T_\xi= & \left(-2\Re\langle \dbar \dbar^\star_\varphi\xi, \xi\rangle + \Vert \dbar \gamma_\xi\Vert^2+
                           \Vert\dbar^\star_\varphi\xi\Vert^2- \Vert \dbar \xi\Vert^2 \right)dV_\omega \\
  + & \Theta_{h_L}(L)\wedge T_\xi.\nonumber  
\end{align} 
\end{lemma}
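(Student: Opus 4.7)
The proof is a direct Siu-type $\partial\bar\partial$-computation applied to the $(n-1,n-1)$-form $T_\xi$. My plan is to expand $\sqrt{-1}\partial\bar\partial T_\xi$ via two applications of the Leibniz rule, using the Chern decomposition $D_{h_L}=D'+\bar\partial$ of the Chern connection on $(L,h_L)$ together with its conjugate on $\bar L$, and then to translate the resulting expressions from the $\gamma_\xi$-side back to the $\xi$-side via Hodge duality. The curvature contribution $\Theta_{h_L}(L)\wedge T_\xi$ appears automatically, since the only place where the weight $\varphi$ is twice differentiated is in the factor $e^{-\varphi}$.

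Concretely, the first application of $\partial$ gives
$$\partial T_\xi=c_{n-1}\bigl(D'\gamma_\xi\wedge\bar\gamma_\xi+(-1)^{n-1}\gamma_\xi\wedge\overline{\bar\partial\gamma_\xi}\bigr)e^{-\varphi},$$
the $\partial\varphi$ twists arising from differentiating $e^{-\varphi}$ being absorbed exactly into the Chern $(1,0)$-derivative $D'$. Applying $\bar\partial$ and using the curvature identity $\bar\partial D'+D'\bar\partial=\Theta_{h_L}(L)$ then produces four types of terms: (i) the curvature piece $\Theta_{h_L}(L)\wedge T_\xi$; (ii) two \emph{diagonal} pieces, of the form $D'\gamma_\xi\wedge\overline{D'\gamma_\xi}\,e^{-\varphi}$ and $\bar\partial\gamma_\xi\wedge\overline{\bar\partial\gamma_\xi}\,e^{-\varphi}$; and (iii) two \emph{cross} pieces, of the form $\bar\partial D'\gamma_\xi\wedge\bar\gamma_\xi\,e^{-\varphi}$ and its complex conjugate.

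The final step is to identify the terms in (ii) and (iii) with the quantities on the right-hand side of the desired formula. Since $\gamma_\xi=\star\xi$, the Hodge $\star$ isometry together with the K\"ahler adjoint identity $\bar\partial^\star_\varphi=-\star D'\star$ converts $D'\gamma_\xi$ into a Hodge star of $\bar\partial^\star_\varphi\xi$, so that $\sqrt{-1}c_{n-1}\,D'\gamma_\xi\wedge\overline{D'\gamma_\xi}\,e^{-\varphi}$ produces the $+\|\bar\partial^\star_\varphi\xi\|^2\,dV_\omega$ term with the correct sign. The $\bar\partial\gamma_\xi$ piece, after the same Hodge translation applied pointwise in a unitary frame, gives rise to a sum of $\|\bar\partial\gamma_\xi\|^2\,dV_\omega$ and a defect $-\|\bar\partial\xi\|^2\,dV_\omega$; this defect reflects the fact that $\star$ does not intertwine $\bar\partial$ with itself on $L$-valued $(n,1)$-forms. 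The cross pieces in (iii), after pairing against $\bar\gamma_\xi$ and one further application of $\bar\partial^\star_\varphi=-\star D'\star$, assemble into $-2\Re\langle\bar\partial\bar\partial^\star_\varphi\xi,\xi\rangle\,dV_\omega$.

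The main obstacle is bookkeeping: the signs and the factors $\sqrt{-1}^{(n-1)^2}$ must be tracked carefully through every step, and the pointwise Hodge $\star$ identity that simultaneously produces $\|\bar\partial\gamma_\xi\|^2$ and $-\|\bar\partial\xi\|^2$ out of the single expression $\sqrt{-1}c_{n-1}\,\bar\partial\gamma_\xi\wedge\overline{\bar\partial\gamma_\xi}\,e^{-\varphi}$ requires a direct verification in a normal frame. Once this pointwise identity is in place, the rest of the derivation is mechanical reorganization of the terms produced by the two Leibniz expansions.
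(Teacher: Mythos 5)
The paper does not prove this lemma at all: it is quoted from Siu \cite{Siu0} (the computation is also carried out in Berndtsson's notes \cite{BB3}), so there is no internal argument to compare yours against. Your sketch is precisely that standard derivation --- expand $\sqrt{-1}\partial\bar\partial T_\xi$ by two Leibniz applications with the Chern decomposition, use $\bar\partial D'+D'\bar\partial=\Theta_{h_L}(L)$ to produce the curvature term, and convert the diagonal and cross terms via Hodge duality --- and its structure is correct, including the correct attribution of the pair $\Vert\bar\partial\gamma_\xi\Vert^2-\Vert\bar\partial\xi\Vert^2$ to the single term $c_{n-1}\,\bar\partial\gamma_\xi\wedge\overline{\bar\partial\gamma_\xi}\,e^{-\varphi}$. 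Be aware, though, that the two pointwise identities you defer (that $D'\gamma_\xi$ equals $\bar\partial^\star_\varphi\xi$ up to $\star$ and a unimodular constant, and the normal-frame identity for $(n-1,1)$-forms which uses $\xi=c_n\,\omega\wedge\gamma_\xi$ and the primitive decomposition) are where essentially all the content and all the sign bookkeeping live, so a complete write-up must carry them out explicitly rather than cite them as mechanical.
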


\noindent We apply this in the following context. Consider the function $\displaystyle w:= \log\frac{1}{|s_Y|^2+ \tau^2}$. A quick computation gives
\begin{equation}\label{eq15}
\sqrt{-1}\ddbar w= \frac{|s_Y|^2}{|s_Y|^2+ \tau^2}\theta_Y- \frac{\tau^2}{(|s_Y|^2+ \tau^2)^2}\sqrt{-1}\partial s_Y\wedge \ol{\partial s_Y} 
\end{equation}
where $\displaystyle \theta_Y:= \Theta_{h_Y}\left(\O(Y)\right)$ is the curvature of the bundle $\O(Y)$ with respect to the metric $h_Y$.

\noindent We multiply the equality \eqref{eq14} with $w$ and integrate the resulting top form over $X$. The left hand side term is equal to the difference of two terms
\begin{equation}\label{eq16}
  c_{n-1}\int_X\frac{|s_Y|^2}{|s_Y|^2+ \tau^2}\theta_Y\wedge \gamma_\xi\wedge \ol \gamma_{\xi}e^{-\varphi_L}\end{equation}
and
\begin{equation}\label{eq116}
 c_{n-1}\int_X \frac{\tau^2}{(|s_Y|^2+ \tau^2)^2}\gamma_\xi\wedge \ol \gamma_{\xi}\wedge \sqrt{-1}\partial s_Y\wedge \ol{\partial s_Y}.
\end{equation}
and we see that \eqref{eq116} is the term we have to
estimate.

\noindent We drop the positive terms on the right hand side and we therefore get
\begin{align}\label{eq17}
  c_{n-1}\int_X \frac{\tau^2}{(|s_Y|^2+ \tau^2)^2}\gamma_\xi\wedge \ol \gamma_{\xi}\wedge & \sqrt{-1}\partial s_Y\wedge \ol{\partial s_Y}\leq 
                                                                                            \int_Xw\vert\dbar \xi|^2e^{-\varphi_L}dV_\omega \nonumber\\
+ &  2\Re \int_Xw\langle \partial_\varphi^\star\partial_\varphi \gamma_\xi, \gamma_\xi\rangle e^{-\varphi_L}dV_\omega\nonumber\\
- & c_{n-1}\int_X\left(w\Theta_{h_L}(L)- \frac{|s_Y|^2}{|s_Y|^2+ \tau^2}\theta_Y\right)\gamma_\xi\wedge \ol \gamma_{\xi}e^{-\varphi_L} \\
\nonumber
\end{align}

\noindent A first observation is that the curvature term \eqref{eq17} is negative, by the hypothesis of Theorem \eqref{bobapprox}. 
Moreover, by Stokes formula we have
\begin{equation}\label{eq18}
  \int_Xw\langle \partial_\varphi^\star\partial_\varphi \gamma_\xi, \gamma_\xi\rangle e^{-\varphi}dV_\omega= \int_Xw|\partial_\varphi \gamma_\xi|^2e^{-\varphi}dV_\omega+  \int_X\langle \partial_\varphi \gamma_\xi,
  \partial w\wedge \gamma_\xi\rangle e^{-\varphi}dV_\omega 
 \end{equation}
so we see that modulo the second term on the RHS of \eqref{eq18}, we are done.

\noindent In order to take care of it we use Cauchy-Schwarz inequality
and we obtain
\begin{align}\label{eq19}
\left|\int_X\langle \partial_\varphi \gamma_\xi,
  \partial w\wedge \gamma_\xi\rangle e^{-\varphi_L}dV_\omega \right|\leq &
\int_Xw^2|\partial_\varphi \gamma_\xi|^2e^{-\varphi_L}dV_\omega\\
+ &  c_{n-1}\int_X\gamma_\xi\wedge \ol \gamma_{\xi}\wedge \frac{\sqrt{-1}\partial w\wedge \dbar w}{w^2}e^{-\varphi_L}.
\nonumber  
\end{align}
Thus the new term to bound is
\begin{equation}\label{eq20}
c_{n-1}\int_X\gamma_\xi\wedge \ol \gamma_{\xi}\wedge \frac{\sqrt{-1}\partial w\wedge \dbar w}{w^2}e^{-\varphi_L}
\end{equation}
and as observed in \cite{BB3}, the quantity \eqref{eq20} is less singular that   
the LHS of \eqref{eq17}, which was our initial problem.
\smallskip

\noindent In order to obtain a bound for \eqref{eq20} we consider the
function $$\displaystyle w_1:= \log w.$$ We have
\begin{equation}\label{eq21}
  \sqrt{-1}\ddbar w_1= \frac{\sqrt{-1}\ddbar w}{w}-
  \frac{\sqrt{-1}\partial w\wedge \dbar w}{w^2}
\end{equation}
and we use the same procedure as before, but with $w_1$ instead of $w$.
The analogue of \eqref{eq16} and \eqref{eq116} read as
\begin{equation}\label{eq22}
  c_{n-1}\int_X\frac{\sqrt{-1}\ddbar w}{w}\wedge \gamma_\xi\wedge \ol \gamma_{\xi}e^{-\varphi_L}-
 c_{n-1}\int_X \gamma_\xi\wedge \ol \gamma_{\xi}\wedge \frac{\sqrt{-1}\partial w\wedge \dbar w}{w^2},
\end{equation}
and this is good, because the second term in \eqref{eq22} is the one we are now after.
We skip some intermediate steps because they are absolutely the same as in the preceding consideration, except that $w_1$ appears instead of $w$.  
After integration by parts, the new ``bad term'', i.e. the analog of the RHS of \eqref{eq18} in our current setting is
\begin{equation}\label{eq24}
\int_X\langle \partial_\varphi \gamma_\xi,
  \partial w_1\wedge \gamma_\xi\rangle e^{-\varphi_L}dV_\omega 
\end{equation}
for which we use Cauchy-Schwarz and the observation is that $\displaystyle \partial w_1\wedge \ol{\partial w_1}$
coincides with $\displaystyle \frac{\sqrt{-1}\partial w\wedge \dbar w}{w^2}$. 

\noindent As a result of this second part of the proof we infer that we have 
\begin{equation}\label{est100}
c_{n-1}\int_X \gamma_\xi\wedge \ol \gamma_{\xi}\wedge \frac{\sqrt{-1}\partial w\wedge \dbar w}{w^2}\leq 
C\int_X\log^2(|s_{Y}|^2+ \tau^2)\left(|\dbar^\star\xi|^2+ |\dbar\xi|^2\right)e^{-\varphi_L}dV_\omega
\end{equation}
Then Theorem \ref{bobapprox} follows,
by combining \eqref{est100} with \eqref{eq19}. 
\end{proof}
\medskip

\begin{remark}\label{estimates}
  Actually we can use the second part of the proof of Theorem \ref{BoB++} in order to get the estimates
\begin{equation}\label{eq30}
  c_{n-1}\int_X\gamma_\xi\wedge \ol {\gamma_\xi}e^{-\varphi_L}\wedge
  \frac{\partial \sigma\wedge \ol{\partial \sigma}e^{-\varphi_F}}{|\sigma|^2\log^2|\sigma|^2}
  \leq C\int_X\log^2(|\sigma|^2)\left(|\xi|^2+ |\dbar^\star\xi|^2+ |\dbar\xi|^2\right)e^{-\varphi_L}dV_\omega
\end{equation}
where $\sigma$ is a holomorphic section of a line bundle $(F, h_F)$ endowed with a non-singular metric $h_F$. The constant ``C'' in \eqref{eq30} depends on the
norm of the curvature of $(F, h_F)$.
Thus, we obtain an estimate of the norm of $\gamma_\xi$ \emph{in the tangential directions of} $\sigma=0$ with respect to the Poincar\'e-type measure associated to $\sigma$. If the curvature of $(L, h_L)$ is greater than the some (small) multiple of $\Theta_{h_F}(F)$, then we can remove the term $|\xi|^2$ in the formula \eqref{eq30}. 
\end{remark}
\medskip

\section{Geometric analysis methods and results}

\noindent Let $\xi$ be a $L$-valued form of $(n,1)$ type such that $\Supp(\xi)
\subset X\setminus (V_{\rm sing}\cup |H|)$. We recall that here $V_{\rm sing}$ is
an open subset of $X$ containing the singularities of $Y$, and $H$ is a hyperplane section containing the singularities of the metric $h_L$.

\noindent We consider the orthogonal decomposition
\begin{equation}\label{eq25}
\xi= \xi_1+ \xi_2
\end{equation}
where $\xi_1\in \Ker(\dbar)$ and $\xi_2\in \Ker(\dbar)^\perp$ with respect to
the fixed K\"ahler metric $\omega_{\cC}$ with conic singularities on $X$ and the given metric $h_L$ on $L$.

\medskip

\noindent $\bullet$  The convention during the current section is that
we denote by $``C"$ any constant which depends in an explicit way of the quantities we will indicate.

\subsection{Orthogonal decomposition, I: approximation}
\medskip

\noindent In the following sections we will use an approximation statement, for which the context is as follows.

We can write
\begin{equation}\label{orth15}
X\setminus H= \bigcup \Omega_m
\end{equation}
where each $\Omega_m$ is a Stein domain with smooth boundary. Let $\omega_m$ be a complete metric on $\Omega_m$. Corresponding to each positive $\delta$ we introduce
\begin{equation}\label{orth16}
\omega_{m, \delta}:= \omega_{\cC}+ \delta \omega_m;
\end{equation}
it is a complete metric on $\Omega_m$ such that $\omega_{m, \delta}> \omega_{\cC}$
and $\lim_{\delta\to 0}\omega_{m, \delta}= \omega_{\cC}$ for each $m$.

\noindent We remark that the $L^2$ norm of $\xi$ with respect to $\omega_{m, \delta}$ and $h_L|_{\Omega_m}$ is finite, given the pointwise
monotonicity of the norm of $(n,1)$--forms.
Then we can 
decompose the restriction of $\xi$ to each $\Omega_m$ as follows
\begin{equation}\label{orth17}
\xi|_{\Omega_m}= \xi_1^{(m,\delta)}+ \xi_2^{(m, \delta)}.
\end{equation}
\medskip

\noindent We establish next the following statement.

\begin{lemma}\label{lim2}
  We have
\begin{equation}\label{orth18}
\xi_1= \lim_{m,\delta}\xi_1^{(m,\delta)}
\end{equation}
uniformly on compact sets of $X\setminus H$.
\end{lemma}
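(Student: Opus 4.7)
The plan is to extract a weak $L^2_{\rm loc}$ limit of $\xi_2^{(m,\delta)} := \xi - \xi_1^{(m,\delta)}$, identify it with $\xi_2$ via the extremal characterization of the orthogonal projection, and then bootstrap the resulting $L^2$-convergence to uniform convergence on compacts using a local $\bar\partial$-Poincar\'e lemma combined with interior elliptic regularity.

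First I recast the orthogonal decomposition variationally: $\xi_2^{(m,\delta)}$ is the $L^2(\omega_{m,\delta}, h_L)$-minimal solution of $\bar\partial \zeta = \bar\partial \xi$ on $\Omega_m$, and similarly $\xi_2$ is the $L^2(\omega_{\mathcal C}, h_L)$-minimal solution on $X\setminus H$. By the pointwise monotonicity of the $(n,1)$-form norm already invoked just above the lemma, $\|\xi_2\|_{\omega_{m,\delta}} \leq \|\xi_2\|_{\omega_{\mathcal C}}$, and comparing the minimizer $\xi_2^{(m,\delta)}$ against the competitor $\xi_2|_{\Omega_m}$ yields the uniform bound
\[
\|\xi_2^{(m,\delta)}\|_{\omega_{m,\delta}} \;\leq\; \|\xi_2\|_{\omega_{m,\delta}} \;\leq\; \|\xi_2\|_{\omega_{\mathcal C}}.
\]

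Second, this uniform bound combined with $\omega_{m,\delta}\to\omega_{\mathcal C}$ on compacts lets me extract a subsequential weak limit $\tilde\xi_2$ of $\xi_2^{(m,\delta)}$ in $L^2_{\rm loc}(X\setminus H,\omega_{\mathcal C})$. Weak continuity of $\bar\partial$ gives $\bar\partial\tilde\xi_2=\bar\partial\xi$, while lower semicontinuity of the norm together with the previous inequality yields $\|\tilde\xi_2\|_{\omega_{\mathcal C}} \leq \|\xi_2\|_{\omega_{\mathcal C}}$. Uniqueness of the minimizer forces $\tilde\xi_2=\xi_2$, hence the whole family $\xi_2^{(m,\delta)}$ converges weakly to $\xi_2$. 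Moreover $\|\xi_2^{(m,\delta)}\|_{\omega_{m,\delta}}\to\|\xi_2\|_{\omega_{\mathcal C}}$, which by a Radon--Riesz-type argument (adapted to the varying metrics, using that on compacts $\omega_{m,\delta}$ is uniformly equivalent to $\omega_{\mathcal C}$) upgrades the convergence to strong $L^2_{\rm loc}$ convergence. The same conclusion then holds for $\xi_1^{(m,\delta)}= \xi-\xi_2^{(m,\delta)}\to \xi_1$.

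Finally, to upgrade $L^2_{\rm loc}$ to uniform convergence on compacts, observe that the difference $\xi_1-\xi_1^{(m,\delta)}$ is a $\bar\partial$-closed $(n,1)$-form tending to zero in $L^2_{\rm loc}$. On any coordinate polydisc $B\Subset X\setminus H$ on which all relevant metrics are smooth and uniformly comparable to a fixed auxiliary K\"ahler metric, H\"ormander's $L^2$-Dolbeault lemma produces $(n,0)$-primitives $\alpha_{m,\delta}$ with $\bar\partial \alpha_{m,\delta}=\xi_1-\xi_1^{(m,\delta)}$ and $\|\alpha_{m,\delta}\|_{L^2(B)}\leq C \|\xi_1-\xi_1^{(m,\delta)}\|_{L^2(B)}$; interior elliptic regularity for the $\bar\partial$-system then yields $C^k$-bounds on any slightly smaller polydisc controlled by the $L^2$ norm on $B$. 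Covering a given compact $K\subset X\setminus H$ by finitely many such polydiscs gives the uniform convergence.

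The main obstacle is this last step: one has to ensure that the constants in the local H\"ormander solution and in the interior elliptic estimates depend only on the fixed reference metric $\omega_{\mathcal C}$, not on the auxiliary metrics $\omega_{m,\delta}$. This is why the regularity argument must be set up in charts staying uniformly away from $H$, where the $\omega_{m,\delta}$ remain smooth and uniformly equivalent to $\omega_{\mathcal C}$, so that the elliptic constants can be controlled independently of $m$ and $\delta$.
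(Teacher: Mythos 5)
Your identification of the limit (steps one to three) is sound and is in fact a different route from the paper's: you characterize $\xi_2^{(m,\delta)}$ variationally as the norm-minimal solution of $\bar\partial\zeta=\bar\partial\xi$, extract a weak limit, and identify it with $\xi_2$ by uniqueness of the minimizer, whereas the paper identifies the limit by checking directly, via a Cauchy--Schwarz tail estimate on $\Omega_m\setminus K_l$, that the limit of $\xi_2^{(m,\delta)}$ is orthogonal to $\Ker\bar\partial$. Modulo the care needed about local comparability of $\omega_{m,\delta}$ and $\omega_{\mathcal C}$ on compact subsets of $X\setminus H$ (an issue the paper is equally brief about), that part of your argument works.

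The final step, however, contains a genuine gap. You use only that $\eta_{m,\delta}:=\xi_1-\xi_1^{(m,\delta)}$ is $\bar\partial$-closed and small in $L^2_{\rm loc}$, and claim that a H\"ormander primitive plus ``interior elliptic regularity for the $\bar\partial$-system'' gives $C^k$-bounds controlled by the $L^2$ norm. This fails: $\bar\partial$-closedness alone is not an elliptic condition on an $(n,1)$-form. Any form $\bar\partial f$ with $f$ merely in $L^2$-type regularity is $\bar\partial$-closed, so a $\bar\partial$-closed form of small $L^2$ norm need not be continuous, let alone uniformly small, on a smaller polydisc; correspondingly, solving $\bar\partial\alpha_{m,\delta}=\eta_{m,\delta}$ gains exactly one derivative (from $\eta\in L^2$ you get $\alpha\in H^1_{\rm loc}$), and you cannot bootstrap to $C^k$ control of $\eta_{m,\delta}=\bar\partial\alpha_{m,\delta}$ without already having higher regularity of $\eta_{m,\delta}$ -- which is what you are trying to prove. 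The missing ingredient is exactly the point on which the paper's proof rests: since $\xi_2^{(m,\delta)}$ is orthogonal to $\Ker\bar\partial\supset\mathrm{Im}\,\bar\partial$, one has $\bar\partial^{\star}_{(m,\delta)}\xi_1^{(m,\delta)}=\bar\partial^{\star}_{(m,\delta)}\xi$, hence the elliptic equation $\Delta''_{m,\delta}\,\xi_1^{(m,\delta)}=\bar\partial\bar\partial^{\star}\xi$ of \eqref{orth4}. Together with the uniform $L^2$ bound \eqref{orth5} and the smooth convergence of $\omega_{m,\delta}$ to $\omega_{\mathcal C}$ on compacts of $X\setminus H$, interior elliptic estimates then give uniform $C^\infty$ bounds for the family $\xi_1^{(m,\delta)}$ on compact sets, and locally uniform convergence follows either by Arzel\`a--Ascoli (as in the paper) or by applying the same elliptic estimates to the differences, using your $L^2_{\rm loc}$ convergence. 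With that substitution your argument closes; as written, the last step does not.
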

\smallskip

\noindent The proof is based on the monotonicity of the $L^2$ norms
\begin{equation}\label{orth19}
\vert \rho\vert_{\omega_{m, \delta}}^2dV_{\omega_{m, \delta}}< \vert \rho\vert_{\omega}^2dV_{\omega}
\end{equation}
for each $m, \delta$ and for any form $\rho$ of type $(n,1)$ with values in $L$.
The details are as follows.

\begin{proof}
  Let $K\subset X\setminus (\varphi_L= -\infty)$ be a compact subset. In what follows we are using the notation $``\ep"$ to indicate the set of parameters $(m, \delta)$, and we assume that $m\gg 0$ so that $K\subset \Omega_m$.

\noindent  We first notice that for each parameter $\ep$ the form $\displaystyle \xi_1^{(\ep)}$ is smooth, and that it verifies the equation 
\begin{equation}\label{orth4}
\Delta_\ep''(\xi_1^{(\ep)})= \dbar\dbar^\star(\xi),
\end{equation}
where $\Delta_\ep''$ is the Laplace operator on $(n, 1)$-forms with values in
$(L, h_L)$ and $(\Omega_m, \omega_{m, \delta})$.
We also have
\begin{equation}\label{orth5}
\int_{\Omega_m}\left|\xi_1^{(\ep)}\right|^2_{\omega_\ep}e^{-\varphi_L}dV_{\omega_\ep}\leq \int_X\left|\xi\right|^2_{\omega_{\cC}}e^{-\varphi_L}dV_{\omega_{\cC}}= \Vert \xi\Vert^2
\end{equation}
given the fact that \eqref{orth18} is orthogonal. It follows that the
family
\begin{equation}\label{orth6}
\xi_1^{(\ep)}|_K
\end{equation}
is uniformly bounded in $\cC^\infty$ norm. We can therefore extract a limit
$\xi_1^{(0)}$ as $\ep\to 0$, uniform on compact subsets by the usual diagonal process. We remark that we have
\begin{equation}\label{orth7}
\dbar \xi_1^{(0)}= 0, \qquad \int_X\left|\xi_1^{(0)}\right|^2_{\omega_{\cC}}e^{-\varphi_L}dV_{\omega_{\cC}}< \infty
\end{equation}
given that each form $\displaystyle \xi_1^{(\ep)}$ is $\dbar$-closed, combined with \eqref{orth5}.
\smallskip

\noindent On the other hand, let $\rho$ be a $\dbar$-closed form of $(n,1)$--type with values in $L$. We assume moreover that $\rho$ is $L^2$ with respect to
$\omega_{\cC}$ and $h_L$. Then we equally have
\begin{equation}\label{orth8}
  \int_{\Omega_m}\left|\rho\right|^2_{\omega_\ep}e^{-\varphi_L}dV_{\omega_\ep}<
  \int_{\Omega_m}\left|\rho\right|^2_{\omega_{\cC}}e^{-\varphi_L}dV_{\omega_{\cC}}< \infty
\end{equation}
for each $\ep= (m, \delta)$, and open subset $\Omega_m$. We infer that
\begin{equation}\label{orth9}
\int_{\Omega_m}\langle \xi_2^{(\ep)}, \rho \rangle_{\omega_\ep} e^{-\varphi_\ep}dV_{\omega_\ep}= 0
\end{equation}
for each value of $m$ and $\ep$.

\noindent Let $(K_l)$ be an increasing exhaustion of $X\setminus (\varphi_L= -\infty)$ by relatively compact sets. If $m\gg 0$ (depending on $l$) then we have
\begin{equation}\label{orth10}
  \left|\int_{\Omega_m\setminus K_l}\langle \xi_2^{(\ep)}, \rho \rangle_{\omega_\ep}
    e^{-\varphi_L}dV_{\omega_\ep}\right|^2\leq C(\xi)\int_{X\setminus K_l}\left|\rho\right|^2_{\omega_\ep}e^{-\varphi_L}dV_{\omega_\ep}
\end{equation}
by Cauchy inequality combined with \eqref{orth8}.
It follows that 
\begin{equation}\label{orth9+}
\left|\int_{K_l}\langle \xi_2^{(\ep)}, \rho \rangle_{\omega_\ep} e^{-\varphi_\ep}dV_{\omega_\ep}\right|^2\leq C(\xi)\int_{X\setminus K_l}\left|\rho\right|^2_{\omega_{\cC}}e^{-\varphi_L}dV_{{\omega_{\cC}}}.
\end{equation}

\noindent By letting $\ep\to 0$ we infer that for each fixed $l$ we
have 
\begin{equation}\label{orth11}
  \left|\int_{K_l}\langle \xi_2^{(0)}, \rho \rangle_{{\omega_{\cC}}} e^{-\varphi_L}
    dV_{{\omega_{\cC}}}\right|^2\leq C(\xi)\int_{X\setminus K_l}\left|\rho\right|^2_{\omega_{\cC}}e^{-\varphi_L}dV_{{\omega_{\cC}}}.
\end{equation}
Next, the inequality \eqref{orth7} shows that $\xi_2^{(0)}$ is $L^2$-integrable
with respect to $(L, h_L)$ and $(X, \omega_{\cC})$. It follows that we have 

\begin{equation}\label{orth13}
  \int_{X\setminus K_l}\langle \xi_2^{(0)}, \rho \rangle_{\omega_{\cC}} e^{-\varphi_L}
  dV_{\omega_{\cC}}\to 0  
\end{equation}
as $l\to \infty$ since both $\rho$ and $\xi_2^{(0)}$ are in $L^2$.

\noindent In other words, the form $\xi_2^{(0)}$ is orthogonal to $\ker \dbar$
and since we have
\begin{equation}\label{orth14}
\xi= \xi_1^{(0)}+ \xi_2^{(0)}
\end{equation}  
our lemma is proved (thanks to the uniqueness of such decomposition).
\end{proof}  


\subsection{Orthogonal decomposition, II: mean value inequality}\label{ortzwei} We analyze
here the behavior of $\xi_1$ restricted to the set $V_{\rm sing}$. During the current subsection we make the following conventions.

\smallskip

\begin{enumerate}
  
\item[(i)] We work with respect to the K\"ahler metric $\omega_{\cC}$
  exclusively on $V_{\rm sing}\subset X$ (this will be understood even if we do not mention it explicitly) and with respect to the Hermitian metric $\displaystyle h_{L}$ defined in the previous section on $L$.

\item[(ii)] We denote by $\xi$ a $(n, 1)$ form with values in $L$ such that we have
  $$\Supp(\xi)
  \subset X\setminus (V_{\rm sing}\cup |H|).$$
  We use the notations in \eqref{eq25} for its orthogonal decomposition with respect to $(\omega_{\cC}, h_L)$.
\end{enumerate}

\noindent 
In this subsection we establish the next result.

\begin{thm}\label{meanV} We have the mean-value type inequality
\begin{equation}\label{eq62}
\sup_{\frac{1}{2}V_{\rm sing}} |\xi_{1}|^{2}e^{-\varphi_L}\leq C(V_{\rm sing})\int_{V_{\rm sing}}|\xi_{1}|^{2}e^{-\varphi_L}dV_{\omega_{\cC}} 
\end{equation}
where $C(V_{\rm sing})$ here is a constant which only depends on the allowed quantities i.e. the geometry of $(V_{\rm sing}, \omega_{\cC})$ as well as $\alpha$ and $\tau$ in the assumption $\rm (i.2)$.
\end{thm}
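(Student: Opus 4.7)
The plan is to combine the Donaldson--Sun observation sketched in Section~2 with a Moser--type iteration adapted to the conic geometry of $(V_{\rm sing}, \omega_{\cC})$. First, using the approximation scheme set up in Lemma \ref{lim2}, we work on $\Omega_m$ with the complete smooth metric $\omega_{m,\delta}$; the form $\xi_1^{(\ep)}$ is then smooth, $\bar\partial$-closed and satisfies
\[
\Delta''_\ep \xi_1^{(\ep)}=\bar\partial\bar\partial^\star \xi .
\]
Since $\Supp(\xi)\subset X\setminus (V_{\rm sing}\cup |H|)$, the right-hand side vanishes on a neighbourhood of the closure of $V_{\rm sing}$, so $\xi_1^{(\ep)}$ is harmonic on $V_{\rm sing}$ for every $\ep$. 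By the uniform convergence in Lemma \ref{lim2}, it is enough to establish the mean-value estimate \eqref{eq62} for each $\xi_1^{(\ep)}$ with a constant independent of $\ep$, and then pass to the limit.

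Next I would apply the Bochner--Kodaira--Nakano identity to $|\xi_1^{(\ep)}|^2e^{-\varphi_L}$ on $V_{\rm sing}$ (with respect to $\omega_{m,\delta}$). Since $\xi_1^{(\ep)}$ is harmonic for $\Delta''_\ep$, the Weitzenböck formula produces
\[
\Delta''_\ep\bigl(|\xi_1^{(\ep)}|^2e^{-\varphi_L}\bigr)\geq |\nabla \xi_1^{(\ep)}|^2 e^{-\varphi_L}-\bigl\langle [\,i\Theta_{h_L}(L)+\Ricci(\omega_{m,\delta}),\Lambda\,]\xi_1^{(\ep)},\xi_1^{(\ep)}\bigr\rangle e^{-\varphi_L}.
\]
On an $(n,1)$-form, $[i\Theta_{h_L}(L),\Lambda]$ equals the trace $\Tr_{\omega_{m,\delta}}\Theta_{h_L}(L)$ acting on the scalar part times the identity (minus a non-negative piece from the other indices); this is exactly the ``wrong-sign'' curvature term flagged in Section~2. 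The point of hypothesis (i.$\beta$) together with the conic model of $\omega_{\cC}$ is that $\Theta_{h_L}(L)$ and $\omega_{\cC}$ have \emph{comparable} conic singularities, so the trace $\Tr_{\omega_{\cC}}\Theta_{h_L}(L)$ is uniformly bounded by a constant depending only on $C_{\rm sing}$ and on the bounded local weight $\tau_L$; the same remains true for $\omega_{m,\delta}$ by monotonicity, and the Ricci contribution is uniformly controlled by the conic geometry of $(V_{\rm sing},\omega_{\cC})$.

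With this in hand, writing $u_\ep:=|\xi_1^{(\ep)}|^2 e^{-\varphi_L}$, I obtain on $V_{\rm sing}$ the differential inequality
\[
\Delta''_\ep u_\ep\geq -C\,u_\ep ,
\]
where $C$ depends only on the allowed quantities. The function $u_\ep$ is therefore a non-negative sub-solution of a uniformly elliptic equation with bounded zeroth-order coefficient. At this stage I would invoke the Moser iteration / De Giorgi--Nash--Moser machinery: combined with the Sobolev inequality on $(V_{\rm sing},\omega_{\cC})$ (valid for conic Kähler metrics, since the $(1-1/k_j)$-conic singularities fall within the regime where the standard Sobolev inequality holds, and the constants are uniform along the approximation $\omega_{m,\delta}\searrow\omega_{\cC}$), the iteration yields
\[
\sup_{\frac12 V_{\rm sing}} u_\ep\leq C(V_{\rm sing})\int_{V_{\rm sing}} u_\ep\, dV_{\omega_{m,\delta}}.
\]
The right-hand side is uniformly bounded in $\ep$ by \eqref{orth5}, and Lemma~\ref{lim2} allows the limit $\ep\to 0$; the monotonicity $dV_{\omega_{m,\delta}}\to dV_{\omega_{\cC}}$ gives \eqref{eq62}.

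The main obstacle, as anticipated in Section~2 and in Remark~\ref{rk1}, is precisely the control of the curvature term of the wrong sign: one must rely crucially on the fact that the singularities of $h_L$ \emph{match} those of $\omega_{\cC}$ in order to bound $\Tr_{\omega_{\cC}}\Theta_{h_L}(L)$, and on the known quasi-isometric model of $\omega_{\cC}$ near the crossings of $W$ in order to set up the Sobolev inequality with a constant depending only on the advertised data. These are the two places where the constant $C(V_{\rm sing})$ acquires its dependence on $\alpha$, $\tau$ and the conic geometry.
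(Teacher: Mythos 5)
Your overall skeleton (harmonicity of the holomorphic component on $V_{\rm sing}$, a differential inequality of the form $\Delta''u\geq -Cu$, then Moser iteration with a Sobolev inequality for $\omega_{\cC}$) matches the second half of the paper's argument, but there is a genuine gap at the heart of the proposal: you treat $u=|\xi_1|^2e^{-\varphi_L}$ as a subsolution of a uniformly elliptic inequality \emph{on all of} $V_{\rm sing}$, whereas the pointwise Weitzenb\"ock computation is only valid on $V_{\rm sing}\setminus |W|$, where $\varphi_L$ and $\omega_{\cC}$ are smooth. Across $W$ the weight is $-\infty$, $u$ is a priori neither smooth nor bounded, and the curvature of $h_L$ contains the current $\sum(1-1/k_j)[W_j]$, so the subsolution property does not extend distributionally for free. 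The paper's proof therefore has a first, crucial step that your proposal omits entirely: the boundedness $\sup_{V_{\rm sing}\setminus |W|}|\xi_1|^2<\infty$ (their \eqref{eq63}), proved by passing to the local ramified covers $\pi(w)=(w_1^{k_1},\dots,w_p^{k_p},w_{p+1},\dots,w_n)$, dividing $\pi^\star\xi_1$ by $\prod w_j^{k_j-1}$, and checking that the resulting form is $L^2$, $\dbar$-closed and weakly harmonic for the pulled-back conic metric and the \emph{bounded} weight $e^{-\tau\circ\pi}$ (this is exactly where the matching of the cone angles of $\omega_{\cC}$ with the exponents $1-1/k_j$ of $h_L$ is used). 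This boundedness is then what makes the Moser iteration work across $W$: the paper multiplies by cutoffs $\Xi_\ep$ vanishing near $W$ with $\int|d\Xi_\ep|^2\to0$, $\int|\Delta\Xi_\ep|\to0$, and the error terms vanish precisely because $f$ is already known to be bounded; the point stressed in the paper is that the final constant is independent of that qualitative sup bound. Without this step, your appeal to ``standard De Giorgi--Nash--Moser'' does not close, because the iteration's integrations by parts must be justified across the singular set $W$.

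A secondary problem is your reliance on the approximation $\xi_1^{(\ep)}$ with respect to the complete metrics $\omega_{m,\delta}$ on the Stein domains $\Omega_m\subset X\setminus H$. Since $W\subset H$ meets $V_{\rm sing}$, the part of $V_{\rm sing}$ near $W$ is only reached in the limit, the metrics $\omega_{m,\delta}$ degenerate relative to $\omega_{\cC}$ there, and the uniformity in $(m,\delta)$ of the Sobolev constant and of the curvature/Ricci bounds that you assert is unjustified. The paper avoids this entirely: in the proof of this theorem it works directly with the limit form $\xi_1$ and the fixed conic metric $\omega_{\cC}$, encoding harmonicity on $V_{\rm sing}$ weakly through $\dbar\xi_1=0$ and orthogonality to $\dbar$ of compactly supported $(n,0)$-forms (the approximation scheme of Lemma \ref{lim2} is used later, for Proposition \ref{unifor1}, not here). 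To repair your proposal you would need to add the ramified-cover boundedness argument (or an equivalent a priori control of $|\xi_1|$ near $W$) and run the iteration with the $\Xi_\ep$-cutoffs on $(V_{\rm sing},\omega_{\cC})$ rather than through the $\omega_{m,\delta}$ approximations.
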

\noindent The norm of $\xi_1$ in \eqref{eq62} is measured with the conic metric
$\omega_{\cC}$.
\medskip

\noindent The proof of Theorem \ref{meanV} unfolds as follows 
(cf. \cite{DS}, \cite{GaGa} for similar computations). In order to simplify the notations, 
we drop the $e^{-\varphi_L}$ in \eqref{eq62}, and write $\displaystyle |\xi_1|^2$ 
to express the point-wise norm of $\xi_1$ with respect to $\omega_{\cC}$ and $h_L$. 
First we show that there exists a constant $C$ such that 

\begin{equation}\label{eq63}
\sup_{V_{\rm sing}\setminus |W|}|\xi_1|^2\leq C< \infty
\end{equation}
where we denote by $|W|$ the support of the divisor $W$. This is the main reason why we have to assume that the singularities of $h_L$ and $\omega$ are ``the same''
in Theorem \ref{otconique}.

\noindent After this, we establish a differential inequality
satisfied by the function $|\xi_1|^2$ \emph{in the complement of the set}
$$\left(\varphi_L= -\infty\right) \cap V_{\rm sing}.$$
This is standard, and it combines nicely with \eqref{eq63} and Moser iteration process to give \eqref{eq62}.

\medskip

\begin{proof}[Proof of Theorem {\rm \ref{meanV}}]

\smallskip First we establish the crucial boundedness result \eqref{eq63}. Let
$z_1,\dots, z_n$ be a set of local coordinates defined on a open subset $\Omega\subset V_{\rm sing}$. We assume that the $\displaystyle (z_i)_{i=1\dots n}$ are adapted to the pair $(X, W)$, meaning that the local equation of $\Omega\cap W$ is
$$z_1\dots z_p= 0.$$
for some $p\leq n$. By hypothesis, the weight of the metric $h_L$ can be
written as
\begin{equation}\label{rev1}
\varphi_L= \sum_{i=1}^p\Big(1-\frac{1}{k_i}\Big)\log|z_i|^2+ \tau,
\end{equation}
where $k_i$ are positive integers and $\tau$ is a bounded function defined on $\Omega$.

\noindent The restriction of $\xi_1$ to $\Omega$ has the following properties
\begin{equation}\label{rev2}
  \int_{\Omega}|\xi_1|^2dV_{\omega_{\cC}}< \infty,\qquad
\dbar \xi_1=0, \qquad \int_\Omega\langle\xi_1, \dbar \phi \rangle_{\omega_{\cC}}e^{-\varphi_L}dV_{\omega_{\cC}}= 0
\end{equation}
where $\phi$ is any smooth $(n, 0)$ form with compact support in $\Omega$ which is $L^2$-integrable, and such that $\dbar \phi$ is in $L^2$ as well.
\smallskip

\noindent These properties have a very neat interpretation in terms of ramified covers, as follows. Let
\begin{equation}\label{rev3}
\pi: \wh \Omega\to \Omega, \qquad \pi(w_1,\dots, w_n)= (w_1^{k_1},\dots,w_p^{k_p}, w_{p+1}, \dots, w_n)
\end{equation}
be the usual local covering map corresponding to the divisor $\sum _{i=1}^p\Big(1-\frac{1}{k_i}\Big) W_i$. We define the $(n, 1)$--form $\eta$ on $\wh \Omega$ as follows
\begin{equation}\label{rev4}
\eta:= \frac{1}{\prod_{j=1}^p w_j^{k_j-1}}\pi^\star \xi_1,
\end{equation}
and a first remark is that we have
\begin{equation}\label{rev5}
\int_{\wh \Omega}|\eta|^2_ge^{-\tau\circ \pi}dV_g< \infty,
\end{equation}
where $g$ is the inverse image of the conic metric $\displaystyle g:= \pi^\star \omega_{\cC}$. The relation \eqref{rev5} is an immediate consequence of the change of variables formula, combined with the expression of $\varphi_L$ in \eqref{rev1}.
It follows that
\begin{equation}\label{rev6}
\dbar \eta= 0
\end{equation}  
on $\wh \Omega$ (this is true point-wise outside the support of $W$, and it extends across $W$ by the arguments in \cite{JP2}).
\smallskip

\noindent Let $\alpha$ be a smooth $(n, 0)$--form on $\wh \Omega$ with compact support. We claim that we have
\begin{equation}\label{rev7}
\int_{\wh \Omega}\langle \eta, \dbar \alpha\rangle_g e^{-\tau\circ \pi}dV_g= 0.
\end{equation}
Indeed this is clear, given the equality
\begin{equation}\label{rev8}
  \int_{\wh \Omega}\langle \eta, \dbar \alpha\rangle_g e^{-\tau\circ \pi}dV_g=
\int_{\wh \Omega}\langle \pi^\star\xi_i, \dbar\wh \alpha\rangle_g e^{-\varphi_L\circ \pi}dV_g  
\end{equation}
where $\wh \alpha:= {\prod_{j=1}^p w_j^{k_j-1}}\alpha$. On the right-hand side of
\eqref{rev8} we can assume that $\wh \alpha$ is the inverse image of a
$(n, 0)$ form with compact support on $\Omega$. This is seen as follows: let $f$ be an element of the group $G$ acting on $\wh \Omega$. Then we have 
\begin{equation}\label{rev9}
\int_{\wh \Omega}\langle \pi^\star\xi_i, \dbar\wh \alpha\rangle_g e^{-\varphi_L\circ \pi}dV_g =
\int_{\wh \Omega}\langle \pi^\star\xi_i, \dbar (f^\star \wh \alpha) \rangle_g e^{-\varphi_L\circ \pi}dV_g 
\end{equation}
since all the other objects
involved in the integral in question are invariant by inverse image. It follows that 
\begin{equation}\label{rev10}
\int_{\wh \Omega}\langle \pi^\star\xi_i, \dbar\wh \alpha\rangle_g e^{-\varphi_L\circ \pi}dV_g =
\int_{\wh \Omega}\langle \pi^\star\xi_i, \dbar (\pi^\star \phi) \rangle_g e^{-\varphi_L\circ \pi}dV_g 
\end{equation}
where $\displaystyle \pi^\star \phi:= \frac{1}{|G|}\sum_{f\in G}f^\star \wh \alpha$.
Then our claim follows by the third property in \eqref{rev2}.
\medskip

\noindent In conclusion, the form $\eta$ is harmonic on $\wh \Omega$ with respect to the metric $g$ and the weight $e^{-\tau\circ \pi}$. It is in particular bounded,
and this is equivalent to \eqref{eq63}. \qed
  
\bigskip

\noindent We obtain next a lower bound for the Laplace of $|\xi_1|^2$. 
 To this end, we choose geodesic local coordinates $\displaystyle (z_i)_{i=1,\dots, n}$ for the K\"ahler metric
$\omega_{\cC}$ locally near a point $x_0\in V_{\rm sing}\setminus W$. Let $e_L$ be a local holomorphic frame of $L$, such that the induced weight
$\phi$ of $h_{L}$ verify the relations
\begin{equation}\label{eq79}
\phi(x_0)= 0, \qquad d\phi(x_0)= 0.
\end{equation}
We express $\xi_1$ locally with respect to these coordinates
\begin{equation}\label{eq80}
\xi_1= \sum\xi_{\ol \alpha}dz\wedge dz^{\ol \alpha}\otimes e_L
\end{equation}
where $dz:= dz^1\wedge \dots \wedge dz^n$. We then have
\begin{equation}\label{eq81}
|\xi_1|^2_{\omega_{\cC}}= \sum_{\alpha, \beta} \xi_{\ol \alpha}\ol{\xi_{\ol \beta}}g^{\ol \alpha \beta}\frac{e^{-\phi_\ep}}{\det g}.
\end{equation}
The formula for the Laplace operator is $\displaystyle \Delta''= \Tr_{\omega_{\cC}}\sqrt{-1}\ddbar$ and so we have  
\begin{align}
  \Delta''(|\xi_1|^2)= &\label{eq82} |\nabla \xi_1|_\ep^2+
2\sum_{\alpha, \beta} \Re\left(\xi_{\ol \alpha, p\ol q}g^{\ol q p}\ol{\xi_{\ol \beta}}g^{\ol \alpha \beta}\right)\frac{e^{-\phi}}{\det g} \\
  + &\label{eq83}  \sum_{\alpha, \beta} \xi_{\ol \alpha}\ol{\xi_{\ol \beta}}g^{\ol \alpha \beta}_{, p\ol q}g^{\ol q p}\frac{e^{-\phi}}{\det g} \\
  - &\label{eq84}  \sum_{\alpha, \beta} \xi_{\ol \alpha}\ol{\xi_{\ol \beta}}
 (\phi+ \log\det g)_{, p\ol q}g^{\ol q p}g^{\ol \alpha \beta}\frac{e^{-\phi}}{\det g} \\
\nonumber
\end{align}
where we denote by $\big(g_{p\ol q}\big)$ the coefficients of the metric
$\omega_{\cC}$ with respect to the geodesic coordinates $(z_i)$ and by $g$ the
corresponding matrix. 
\smallskip

\noindent In order to obtain an intrinsic expression of the terms containing the second derivative in the RHS of \eqref{eq82}, 
we recall that we have

\begin{equation}\label{eq85}
  \dbar^\star_{\varphi_L}\xi_1= (-1)^n\left(-\frac{\partial \xi_{\ol \alpha}}{\partial z_\beta}g^{\ol \alpha \beta}- \frac{\partial g^{\ol \alpha \beta}}{\partial z_\beta}\xi_{\ol \alpha}+
 \xi_{\ol \alpha}g^{\ol \alpha \beta} \frac{\partial \varphi_L}{\partial z_\beta}\right)dz\otimes e_L
\end{equation}
hence the next equality holds at $x_0$
\begin{equation}\label{eq86}
\langle \square_\ep \xi_1, \xi_1\rangle_\ep= \left(-\xi_{\ol \alpha, p\ol q}g^{\ol q \beta}\ol{\xi_{\ol \beta}}g^{\ol \alpha p}- \xi_{\ol \alpha}\ol{\xi_{\ol \beta}}g^{\ol \alpha \delta}_{, \delta \ol \gamma}g^{\ol\gamma \beta}+ \xi_{\ol \alpha}\ol{\xi_{\ol \beta}}\varphi_{L, \delta \ol \gamma}g^{\ol \alpha \delta}g^{\ol \gamma \beta}\right)\frac{e^{-\phi}}{\det g}
\end{equation}
where $\square_\ep:= [\dbar, \dbar^\star]$ is the Laplace operator acting on $L$-valued forms of $(n, 1)$ type. The formula \eqref{eq86} is only valid for closed forms, which is the case for
$\xi_1$. Also, we have $\displaystyle \xi_{\ol \alpha, p\ol q}= \xi_{\ol q, p\ol \alpha}$ and therefore (twice the real part of) the first term on the RHS of \eqref{eq86} coincides with the
second one on the RHS of  \eqref{eq82}.
\smallskip

\noindent Next, since the metric $\omega$ is K\"ahler we have $\displaystyle
g^{\ol \alpha \delta}_{, \delta \ol \gamma}= -g_{\alpha \ol\gamma, \delta\ol\delta}$ hence we obtain
\begin{equation}\label{eq87}
\xi_{\ol \alpha}\ol{\xi_{\ol \beta}}g^{\ol \alpha \delta}_{, \delta \ol \gamma}g^{\ol\gamma \beta}= \mathcal R _{\alpha\ol \beta}\xi_{\ol \alpha}\ol{\xi_{\ol \beta}} 
\end{equation}
where $\mathcal R _{\alpha\ol \beta}$ are the coefficients of the Ricci tensor of $\omega_{\cC}$.

\noindent The last term in \eqref{eq86} is simply $\displaystyle \theta_{\alpha \ol \beta}\xi_{\ol \alpha}\ol{\xi_{\ol \beta}}$ where $\displaystyle \theta_{\alpha \ol \beta}$ are the coefficients of $\Theta_{h_L}(L)$.
\smallskip

\noindent Again by the K\"ahler hypothesis the term \eqref{eq83} is equal to
\begin{equation}\label{eq88}
\mathcal R _{\alpha\ol \beta}\xi_{\ol \alpha}\ol{\xi_{\ol \beta}}
\end{equation}
and therefore we obtain
\begin{align}
  \Delta''(|\xi_1|^2)= &\label{eq89} |\nabla \xi_1|^2- 2\Re\langle \square \xi_1, \xi_1\rangle\\
  + &\label{eq90} 2\sum_{\alpha, \beta}\left(\theta_{\alpha \ol \beta}- \mathcal R _{\alpha\ol \beta}\right) \xi_{\ol \alpha}\ol{\xi_{\ol \beta}}+ \sum_{\alpha, \beta} \mathcal R _{\alpha\ol \beta} \xi_{\ol \alpha}\ol{\xi_{\ol \beta}}\\
- &\label{eq91} \sum_{\alpha, \beta}\left(\theta_{\alpha \ol \beta}- \mathcal R _{\alpha\ol \beta}\right)g^{\ol\beta \alpha}|\xi_1|^2\\ 
\nonumber
\end{align}
by collecting the previous equalities at $x_0$.
\smallskip

\noindent The Ricci curvature of the metric $\omega_{\cC}$ is uniformly bounded,
so the function
\begin{equation}\label{mos1}
  f^2:= |\xi_1|^2
\end{equation}
(where the norm is measured with respect to $\omega_{\cC}$ and $h_L$) 
verifies the following properties.
\begin{enumerate}
\smallskip

\item[(1)] We have $\sup_{V_{\rm sing}\setminus W}f< \infty$, and moreover $f$ is smooth on $V_{\rm sing}\setminus W$.
\smallskip

\item[(2)] The following differential inequality holds true
\begin{equation}\label{mos777}
\Delta''f^2\geq |\nabla f|^2- Cf^2
\end{equation}
where $C$ is a constant depending on the  Ricci curvature of the metric $\omega_{\cC}$ and the trace of $dd^c\tau$ with respect to it. 
\end{enumerate}

\noindent Indeed the inequality at the point (2) follows from \eqref{eq89},
since we have $$|\nabla \xi_1|^2= \big|\nabla |\xi_1|\big|^2.$$



\medskip

\noindent Based on (1) and (2) we can conclude in two ways: either show that Schoen-Yau 
mean-value inequality holds for functions $f$ which verify these properties (the proof would be a simple adaptation of the arguments presented in \cite{SY}), or use the Moser iteration procedure. In what follows, we use Moser procedure.

\bigskip



\noindent We show next that the following statement holds true.

\begin{lemma}\label{mean2} Let $f$ be the function defined in \eqref{mos1}. Then 
there exists a constant $C_1$ depending on  $C$ and $(V_{\rm sing}, \omega_{\cC})$ only such that we have 
\begin{equation}\label{mos3}
\sup_{\frac{1}{2}V_{\rm sing} \setminus W}f^2 \leq C_1\int_{V_{\rm sing}} f^2dV_{\omega_{\cC}}.
\end{equation}
\end{lemma}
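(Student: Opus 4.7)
The plan is to implement a Moser iteration scheme based on the differential inequality (\ref{mos777}) together with the $L^\infty$ bound from property (1). The principal subtlety will be justifying the integration by parts across the singular locus $W$, where $f$ is only bounded and not a priori smooth.

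First I would fix a smooth cutoff $\eta$ supported in $V_{\rm sing}$ with $\eta \equiv 1$ on $\frac{1}{2}V_{\rm sing}$, and then interpolate between these two sets by a one-parameter family of cutoffs in the usual Moser manner. To handle the singular divisor, I would introduce logarithmic cutoffs $\chi_\delta$ that vanish in a $\delta$-neighborhood of $|W|$, equal $1$ outside a $2\delta$-neighborhood, and satisfy $\int |\nabla \chi_\delta|^2 dV_{\omega_{\cC}} \to 0$ as $\delta \to 0$. Such cutoffs exist because $|W|$ has real codimension two, and the resulting error terms will tend to zero thanks to the uniform bound $\sup_{V_{\rm sing}\setminus W} f < \infty$ supplied by property (1).

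For a parameter $p \geq 1$, I would test the differential inequality (\ref{mos777}) against $\eta^2 \chi_\delta^2 f^{2p-2}$. After integration by parts and passage to the limit $\delta \to 0$, the contribution of $\nabla \chi_\delta$ drops out and one is left with a Caccioppoli-type inequality of the shape
\[
\int \eta^2 f^{2p-2} |\nabla f|^2 dV_{\omega_{\cC}} \leq C(1+p) \int \left(\eta^2 + |\nabla \eta|^2\right) f^{2p} dV_{\omega_{\cC}}.
\]
Combined with the Sobolev inequality on the conic Kähler manifold $(V_{\rm sing}, \omega_{\cC})$, valid with some exponent $\chi > 1$ that depends only on the geometry of $(V_{\rm sing}, \omega_{\cC})$, this yields a reverse Hölder estimate of the form
\[
\|f\|_{L^{2p\chi}(U')}^{2} \leq \frac{(Cp)^{2/p}}{\mathrm{dist}(U',\partial U)^{2/p}} \|f\|_{L^{2p}(U)}^{2}
\]
for any pair $U'\Subset U \subset V_{\rm sing}$.

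Iterating this estimate with $p_k = \chi^k$ along a telescoping sequence of nested open sets interpolating between $V_{\rm sing}$ and $\frac{1}{2}V_{\rm sing}$, the product of the constants converges (standard Moser bookkeeping), and letting $k \to \infty$ yields $\sup_{\frac{1}{2}V_{\rm sing}\setminus W} f^2 \leq C_1 \int_{V_{\rm sing}} f^2 dV_{\omega_{\cC}}$ with $C_1$ depending only on $C$ and $(V_{\rm sing}, \omega_{\cC})$. The main obstacle is the first step: securing the Caccioppoli inequality despite the singular locus $W$. Once this is in place, everything downstream is the classical Moser machinery adapted to the conic setting.
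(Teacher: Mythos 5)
Your proposal is correct and follows essentially the same route as the paper: Moser iteration on the inequality \eqref{mos777}, with logarithmic cutoffs near $|W|$ (the paper's $\Xi_\ep=\rho_\ep(\log\log\frac{1}{|s_W|^2})$, whose gradient and Laplacian errors are killed precisely by the uniform bound in property (1)), followed by the Sobolev inequality for $(X,\omega_{\cC})$ and the standard iteration over the nested sets $(1/2+1/2^i)V_{\rm sing}$. The only point you assert rather than verify is that $\int|\nabla\chi_\delta|^2dV_{\omega_{\cC}}\to 0$ holds with respect to the \emph{conic} metric, which the paper checks explicitly in \eqref{rev21}; this is a detail, not a gap.
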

\noindent Remark that the main point here is that the constant $C_1$ is \emph{independent of the $\sup$ norm in {\rm (1)}}. After establishing this statement we are basically done, i.e. this implies Theorem \ref{meanV} announced at the beginning of the  current section.
\smallskip

\begin{proof}[Proof of Lemma {\rm \ref{mean2}}]
 Let $\rho$ be a function which is equal to
  1 on $\displaystyle 1/2V_{\rm sing}$ and whose support is in $V_{\rm sing}$.
  Then we have
\begin{equation}\label{mos333}
|\Delta \rho|\leq C, \qquad |d\rho| \leq C
\end{equation}
where the norm of the differential in \eqref{mos333} is measured with respect to the reference metric $\omega$ on $X$. 

\noindent Following \cite{BB2}, there exists a family of functions
$\displaystyle (\Xi_\ep)_{\ep> 0}$ associated to the analytic subset
$\displaystyle W= (h_L= \infty)\cap V_{\rm sing}$ such that $\Supp (\Xi_\ep)\subset V_{\rm sing}\setminus W$ and
for each compact subset $K\subset X\setminus W$ we have $\Xi_\ep|_K= 1$ if
$\ep< \ep(K)$ is small enough.
Moreover we have 
\begin{equation}\label{mos4}
\int_X|d (\Xi_\ep)|^2dV_\omega\to 0, \qquad \int_X|\Delta (\Xi_\ep)|dV_\omega\to 0 
\end{equation}
as $\ep\to 0$. We recall very briefly the construction: let $\rho_\ep$ be a function 
which is equal to one on the interval $[0, \ep^{-1}]$ and which equals zero on 
$[1+ \ep^{-1}, \infty[$. Then we define 
\begin{equation}\label{rev20}
\Xi_\ep:= \rho_\ep\left(\log\big(\log\frac{1}{|s_W|^{2}}\big)\right)
\end{equation}
where $s_W$ is the sections whose zero set is $W$. Then with respect to the conic metric 
$\omega_{\cC}$ we have 
\begin{equation}\label{rev21} 
|d (\Xi_\ep)|^2_{\omega_{\cC}} \leq \frac{\rho_\ep'}{\log^2|s_W|^2}\sum \frac{1}{|z_j|^{2/k_j}},
\end{equation}
up to a constant, from which \eqref{mos4} follows (we get a similar inequality for the Laplacian of $\Xi_\ep$).

\noindent The existence of $\displaystyle (\Xi_\ep)_{\ep> 0}$ combined with the second property in \eqref{mos333} allows us to deal with the fact that $f$ is not necessarily smooth.
\medskip

\noindent The proof which follows is rather standard, but we will nevertheless provide a complete treatment for convenience. We refer to \cite{GT} for a general discussion about Moser iteration method, and more specifically to
\cite{GaGa} where this is implemented in a context which is very similar to
ours here. 
\smallskip

We multiply the differential inequality \eqref{mos777} with $\Xi_\ep \rho^2$ and integrate the result over $X$; we infer that we have
\begin{equation}\label{mos5}
\int_X \Xi_\ep\rho^2\Delta(f^2)dV_{\omega_{\cC}}\geq \int_X \Xi_\ep\rho^2|d(f)|^2dV_{\omega_{\cC}}.
\end{equation}
On the LHS of we integrate by parts. The terms containing derivatives of $\Xi_\ep$ are
\begin{equation}\label{rev22}
\int_X|\Delta (\Xi_\ep)|\rho^2 f^2dV_{\omega_{\cC}}, \int_X\langle d\Xi_\ep,  d(\rho^2)\rangle f^2dV_{\omega_{\cC}}
\end{equation}
and they tend to zero \emph{precisely} because of the uniform boundedness 
property (1) of $f$, together with \eqref{mos4}.
These terms are vanishing as $\ep\to 0$, and the inequality \eqref{mos5} becomes
\begin{equation}\label{mos6}
\int_X f^2\Delta(\rho^2)dV_{\omega_{\cC}}\geq \int_X \rho^2|d(f)|^2dV_{\omega_{\cC}}.
\end{equation}
On the other hand we write

\begin{equation}\label{mos7}
 \int_X \rho^2|\nabla f|^2dV_{\omega_{\cC}}\geq \frac{1}{2}\int_X \big|\nabla(\rho f)\big|^2dV_{\omega_{\cC}}
- \int_X f^2|\nabla \rho|^2 dV_{\ep}, 
\end{equation}

\noindent which combined with \eqref{mos6} gives
\begin{equation}\label{mos8}
\int_X \big|\nabla(\rho f)\big|^2dV_{\omega_{\cC}}\leq C\int_{V_{\rm sing}} f^2 dV_{\omega_{\cC}}.
\end{equation}
where the constant $C$ in \eqref{mos8} only depends on the norm of the derivative of $\rho$.
\medskip

\noindent The following version of Sobolev inequality is
a direct consequence of \cite{Li1}, page 153.

\begin{thm}
  There exists a constant $C>0 $ such that the following holds
\begin{equation}\label{eq106}
 \frac{1}{C} \left(\int_X|f|^{\frac{2n}{n-1}}dV_{\omega_{\cC}}\right)^{\frac{n-1}{n}}\leq
\int_X|f|^2dV_{\omega_{\cC}}+ \int_X|\nabla f|^2dV_{\omega_{\cC}}   
\end{equation}
for any function $f$ on $X$.
\end{thm}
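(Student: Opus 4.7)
My plan is to reduce the inequality to the smooth compact Kähler case treated in \cite{Li1}, page 153, using the fact that the conic angles of $\omega_{\cC}$ are all of the form $2\pi/k_i$ with $k_i \in \mathbb Z_{>0}$, which allows local uniformization by finite branched covers.

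First, I would cover $X$ by finitely many coordinate open sets of two types: charts $U_\alpha$ whose closure avoids $|W|$, where $\omega_{\cC}$ is already smooth; and charts $V_\beta$ centered at points of $|W|$, on each of which there is a local branched cover $\pi_\beta: \widehat V_\beta \to V_\beta$ of degree $\prod k_i$ as in \eqref{rev3} such that $g_\beta := \pi_\beta^\star \omega_{\cC}$ extends to a genuine smooth Kähler metric on $\widehat V_\beta$, with Ricci curvature bounded below by a uniform constant.

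Next, I would fix a partition of unity $(\chi_\ell)$ subordinate to this cover and write $f = \sum_\ell \chi_\ell f$. On each patch $U_\alpha$, Li's smooth Sobolev inequality \cite{Li1} applied to the compactly supported function $\chi_\alpha f$ gives the desired estimate directly, with the derivatives of $\chi_\alpha$ absorbed into the $\int |f|^2$ term. On each patch $V_\beta$, I would pull the function $\chi_\beta f$ back via $\pi_\beta$ to obtain $\widehat{\chi_\beta f} := (\chi_\beta f) \circ \pi_\beta$ on $\widehat V_\beta$, which is now a compactly supported function on a smooth Kähler piece. Applying Li's inequality to $\widehat{\chi_\beta f}$ and then pushing the estimate back down via the change-of-variables formula,
\begin{equation*}
\int_{\widehat V_\beta} |\widehat{\chi_\beta f}|^{2n/(n-1)} dV_{g_\beta} = \deg(\pi_\beta) \int_{V_\beta} |\chi_\beta f|^{2n/(n-1)} dV_{\omega_{\cC}},
\end{equation*}
together with the analogous identities for $\int |\cdot|^2 dV$ and $\int |\nabla \cdot|^2 dV$ (valid because $\pi_\beta$ is a local isometry off the ramification locus, which has measure zero), yields the Sobolev inequality for $\chi_\beta f$ with a constant depending only on $\deg(\pi_\beta)$ and the Sobolev constant on $\widehat V_\beta$. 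Summing over $\ell$ using Minkowski's inequality on the left-hand side produces the desired global estimate on $X$.

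The main potential obstacle is verifying that the local smooth covers yield a uniform Sobolev constant; this reduces to checking that the pulled-back metrics $g_\beta$ have Ricci curvature uniformly bounded below and injectivity radius uniformly bounded from below, which is straightforward from the explicit form of $\omega_{\cC}$ around $|W|$ together with the compactness of $X$. Everything else is routine partition-of-unity bookkeeping, which is why the inequality is described as a direct consequence of \cite{Li1}.
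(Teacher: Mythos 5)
Your argument is correct in substance, but it takes a more hands-on route than the paper, which offers no uniformization argument at all: there the inequality is simply quoted as a direct consequence of the general Sobolev inequality for compact manifolds in Li's book \cite{Li1}, page 153, the conic metric entering only through the global geometric quantities that control the Sobolev constant in that reference. Your local approach --- smooth charts away from $|W|$, branched covers $\pi_\beta(w)=(w_1^{k_1},\dots,w_p^{k_p},w_{p+1},\dots,w_n)$ near $|W|$ as in \eqref{rev3}, the smooth Sobolev (or just the Euclidean Gagliardo--Nirenberg--Sobolev) inequality upstairs, change of variables, and a partition of unity --- is a legitimate and in some ways more transparent proof, and it makes explicit why cone angles of the form $2\pi/k_i$ cause no trouble. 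Two small points deserve care. First, for a general K\"ahler metric with conic singularities the pullback $\pi_\beta^\star\omega_{\cC}$ is in general only quasi-isometric to a smooth metric on $\widehat V_\beta$ (for the model cone metric it is exactly smooth, as the computation $z=w^k$, $|z|^{-2(1-1/k)}|dz|^2=k^2|dw|^2$ shows), and its Ricci curvature need not be bounded below pointwise; but neither smoothness nor a Ricci bound nor an injectivity radius bound is actually needed, since quasi-isometry to the Euclidean metric already transfers the local Sobolev inequality for compactly supported functions with a controlled constant, so your ``main potential obstacle'' paragraph can be replaced by this simpler remark. Second, in the gluing step the gradients of the cutoff functions must be measured with respect to $\omega_{\cC}$; this is harmless because $\omega_{\cC}$ dominates a fixed smooth metric on $X$, hence $|d\chi_\ell|_{\omega_{\cC}}$ is bounded, and $dV_{\omega_{\cC}}$ has finite total mass, so the cutoff error terms are absorbed into $\int_X|f|^2dV_{\omega_{\cC}}$ exactly as you indicate.
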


\noindent We therefore infer that we have
\begin{equation}\label{mos9}
  \left(\int_{1/2V_{\rm sing}}|f|^{\frac{2n}{n-1}}dV_{\omega_{\cC}}\right)^{\frac{n-1}{n}}
\leq C \int_{V_{\rm sing}} |f|^2 dV_{{\omega_{\cC}}}.  
\end{equation}
\smallskip

\noindent In order to obtain estimates for higher norms, we use \eqref{eq106} for
$\displaystyle f:= \Xi_\ep\rho f^{\frac{p}{2}}$ and we obtain
\begin{equation}\label{mos10}
\frac{1}{C} \left(\int_X(\Xi_\ep\rho)^{\frac{2n}{n-1}}f^{\frac{pn}{n-1}}dV_{\omega_{\cC}}\right)^{\frac{n-1}{n}}\leq
\int_X(\Xi_\ep\rho)^2f^pdV_{\omega_{\cC}}+ \int_X\left|\nabla\left(\Xi_\ep\rho f^{\frac{p}{2}}\right)\right|^2dV_{\omega_{\cC}}. 
\end{equation}
We show now that the second term of the right hand side of \eqref{mos10} verifies the inequality
\begin{equation}\label{mos11}
\int_X\left|\nabla\left(\rho f^{\frac{p}{2}}\right)\right|^2dV_{\omega_{\cC}}\leq Cp\int_X(\rho^2+ |\nabla \rho|^2)f^p dV_{\omega_{\cC}}.
\end{equation}
This is done using integration by parts: we have
$$\displaystyle \nabla\left(\rho \Xi_\ep f^{\frac{p}{2}}\right)= f^{\frac{p}{2}}\nabla (\rho\Xi_\ep)+ \frac{p}{2}\Xi_\ep\rho f^{\frac{p-2}{2}}\nabla f,$$ so we have to obtain
a bound for the term
\begin{equation}\label{mos12}
  \int_X(\rho\Xi_\ep)^2f^{{p-2}}|\nabla f|^2dV_{\omega_{\cC}}=
  \frac{1}{2}\int_X(\rho\Xi_\ep)^2f^{{p-3}}\langle \nabla f^2, \nabla f|\rangle dV_{\omega_{\cC}}. 
\end{equation}
We have
\begin{align}
  (p-2)& \int_X(\rho\Xi_\ep)^2f^{{p-3}}\langle \nabla f^2, \nabla f\rangle dV_{\omega_{\cC}}  \nonumber \\
  = & - \int_X(\rho\Xi_\ep)^2f^{{p-2}}\Delta f^2dV_{\omega_{\cC}} - 2\int_Xf^{{p-2}}\langle (\rho\Xi_\ep)\nabla f,
      f\nabla (\rho\Xi_\ep)\rangle dV_{\omega_{\cC}} \nonumber\\
 \leq  \label{mos13} & -\int_X(\rho\Xi_\ep)^2f^{{p-2}}|\nabla(f)|^2dV_{\omega_{\cC}} -
         2\int_Xf^{{p-2}}\langle (\rho\Xi_\ep)\nabla f, f\nabla (\rho\Xi_\ep)\rangle dV_{\omega_{\cC}} \\
 + &  C\int_X(\rho\Xi_\ep)^2f^{{p}}dV_{\omega_{\cC}}  \nonumber \\    
  \leq & C\int_X((\rho\Xi_\ep)^2+ |\nabla (\rho\Xi_\ep)|^2)f^pdV_{\omega_{\cC}}
\nonumber  
\end{align}
and as before, the terms involving $\nabla (\Xi_\ep)$ tend to zero as $\ep\to 0$.
We therefore get the inequality \eqref{mos11}. Remark that we are using the inequality \eqref{mos777} in order to obtain \eqref{mos13}.

\smallskip

\noindent We define $V_i:= (1/2+ 1/2^i)V_{\rm sing}$ and let $\rho_i$ be a cutoff function such that $\rho_i= 1$ on $V_{i+1}$ and such that $\Supp(\rho_i)\subset V_i$. Then we have $|\nabla \rho_i|\leq C2^i$, and by using \eqref{mos11} combined with the usual iteration process, Lemma \ref{mean2} follows.
\end{proof}
\medskip

\noindent Theorem \ref{meanV} is proved.\end{proof}
\medskip

\begin{remark}
  Actually a careful examination of the proof shows that one can obtain a
  constant $``C"$ in Lemma \ref{mean2} as follows
  \begin{equation}\label{mean100}
C= \frac{C(X, \omega)}{\Vol(V_{\rm sing})}.
\end{equation}
If necessary, this can be obtained by adapting the arguments of Schoen-Yau
in \cite{SY}, page 75.
\end{remark}  

\section{Proof of the main results}

\subsection{Proof of Theorem \ref{otconique}} We consider the ``usual'' family of cut-off functions
\begin{equation}\label{fin1}
\rho_\ep: X\to \R,\qquad \rho_\ep(z)= \rho\left(\frac{|s_Y|^2}{\ep^2}\right)
\end{equation}
where $\rho$ is a function defined on the set of positive 
real numbers such that $\rho= 1$ on $[0, 1]$ and $\rho= 0$ on $[2, \infty[$.

\noindent We will show here that the following a-priori inequality holds
\begin{equation}\label{eq111}
  \left|\int_X\dbar \left(\rho_\ep U_0\right)\wedge \ol{\gamma_{\xi}}e^{-\phi_L}\right|^2
  \leq C_\ep(U_0) \int_X\prod\log^2(|s_{Y_j}|^2+ \ep^{2/N})|\dbar^\star\xi|^2e^{-\phi_L}dV_{\omega_\mathcal{C}}
\end{equation}
where $N$ is the number of components of $Y$ and $\xi$ is a $(n, 1)$-form with values in $Y+ L$ whose support is contained in the complement of $V_{\rm sing}\cup H$. Also, $U_0$ 
is an arbitrary holomorphic extension of $u$, cf. \cite{CDM}, and we denote by $\phi_L$ the metric
\begin{equation}\label{fin100}
\phi_L:= \varphi_L+ \log|f_Y|^2
\end{equation}
on the bundle $L+ Y$.
We will see that the constant 
$C_\ep(U_0)$ in \eqref{eq111} is explicit, and it converges to the RHS of 
\eqref{qi1} as $\ep\to 0$. 
 Note that  all the
integrals above are at least well defined, given the condition we impose on the
support of $\xi$.
\medskip

\noindent The proof of \eqref{eq111} will be presented along the following line of arguments.
\smallskip

\noindent $\bullet$ Consider a $(n,1)$-form $\xi$ as above together with the orthogonal decomposition $\displaystyle \frac{\xi}{s_Y}= \xi_1+ \xi_2$ we have already discussed in detail in the previous section. Then we show that we have 
\begin{equation}\label{eq65}
  c_{n}\int_X\frac{\ep^2}{(\ep^{2}+ |s_Y|^2)^2}\gamma_{\xi_1}\wedge \partial s_Y\wedge \ol{\gamma_{\xi_1}\wedge \partial s_Y}e^{-\varphi_L}
  \leq 
\int_X\prod\log^2(|s_{Y_j}|^2+ \ep^{2/N})|\dbar^\star\xi|^2e^{-\phi_L}dV_{\omega_\mathcal{C}}.
\end{equation}
up to a numerical constant. 
This will be done by an approximation argument, using Lemma \ref{lim2}
as well as Theorem \ref{bobapprox}.
\smallskip

\noindent $\bullet$ The norm of the functional on the LHS of \eqref{eq111}
is evaluate in two steps
on the set $V_{\rm sing}$ we use Theorem \ref{meanV}, combined with a few simple
calculations. In the complement $\displaystyle X\setminus V_{\rm sing}$ the arguments are rather standard: we will use \eqref{eq65}.
\medskip

\noindent The remaining part of the current section is organized as follows.
We first show that \eqref{eq111} implies the existence of an ``estimable extension'' 
of $u$. Then we prove that the estimate \eqref{eq111} holds true.
  
\subsubsection{Functional analysis} Our method relies  on the next statement.
\begin{thm}\label{boestim} 
Let $u\in H^0 (Y, (K_X +Y+ L) |_Y)$ be a holomorphic section. 
We assume that there is a constant $C_\ep(U)$ such that for any $L$-valued smooth test form $\xi$ of type $(n, 1)$ with compact support in $X\setminus (V_{\rm sing}\cup|W|)$ the a-priori inequality \eqref{eq111} holds.
Then $u$ admits an extension $U\in H^0 (X, K_X+Y+ L)$ such that
\begin{equation}\label{bt702}
\int_{X\setminus V_{\rm sing}}\frac{|U|^2}{|s_Y|^{2}\prod\log^2(|s_{Y_j}|^2)}e^{-\varphi_L- \varphi_Y}dV_\omega\leq \lim_{\ep\to 0}C_\ep(U).
\end{equation}
\end{thm}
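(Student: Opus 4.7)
The plan is to apply the standard $L^2$-duality argument, in the H\"ormander style, to convert the a-priori bound \eqref{eq111} into a controlled $\dbar$-solution, and then to pass to the limit $\ep\to 0$. For fixed $\ep>0$, consider the linear functional
\[
T_\ep(\xi) := \int_X \dbar(\rho_\ep U_0)\wedge \ol{\gamma_\xi}\, e^{-\phi_L}
\]
on smooth $L$-valued $(n,1)$-test forms $\xi$ supported in $X\setminus(V_{\rm sing}\cup |H|)$. The bound \eqref{eq111} says that $T_\ep$ vanishes on $\ker\dbar^\star$ and is dominated by $\sqrt{C_\ep(U_0)}$ times the norm of $\dbar^\star\xi$ in the $L^2$-space with weight $\prod\log^2(|s_{Y_j}|^2+\ep^{2/N})\,e^{-\phi_L}$. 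Hahn--Banach together with Riesz representation then yields a section $v_\ep$ of $K_X+Y+L$ such that
\[
T_\ep(\xi)=\int_X\langle v_\ep,\dbar^\star\xi\rangle\,e^{-\phi_L}\,dV_{\omega_{\cC}}, \qquad
\int_X \frac{|v_\ep|^2\, e^{-\phi_L}}{\prod\log^2(|s_{Y_j}|^2+\ep^{2/N})}\, dV_{\omega_{\cC}}\leq C_\ep(U_0),
\]
and $\dbar v_\ep=\dbar(\rho_\ep U_0)$ weakly on $X\setminus(V_{\rm sing}\cup|H|)$.

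Set $U_\ep:=\rho_\ep U_0-v_\ep$. Then $U_\ep$ is $\dbar$-closed on the above open set; by the standard $L^2_{\rm loc}$ removable-singularity result across the codimension-at-least-one analytic subset $V_{\rm sing}\cup|H|$, it extends to a global holomorphic section of $K_X+Y+L$. Because $\dbar(\rho_\ep U_0)$ is supported in the annulus $\{\ep^2\leq|s_Y|^2\leq 2\ep^2\}$, the form $v_\ep$ is holomorphic on the neighborhood $\{|s_Y|^2<\ep^2\}$ of $Y$; this, combined with the integrability in the natural metric on $K_X+Y+L$ (the log factor only makes the weight less singular), is the classical Ohsawa--Takegoshi pole-killing condition and forces the Poincar\'e residue of $v_\ep$ along $Y$ to vanish. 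Since $\rho_\ep\equiv 1$ in a neighborhood of $Y$, we conclude $U_\ep|_Y=(\rho_\ep U_0)|_Y=u$.

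It remains to let $\ep\to 0$. The uniform bound on $v_\ep$, together with the fact that $\rho_\ep U_0$ is supported in a shrinking neighborhood of $Y$, give uniform $L^2_{\rm loc}$-bounds on $U_\ep$ over compact subsets of $X\setminus Y$. Montel's theorem then extracts a subsequence $U_{\ep_j}$ converging locally uniformly on $X\setminus Y$ to a holomorphic section; $L^2$-integrability extends it across $Y$ as a global holomorphic section $U$ of $K_X+Y+L$ with $U|_Y=u$. Applying Fatou's lemma to the weighted integrals on $X\setminus V_{\rm sing}$, and using the pointwise convergence $\log^2(|s_{Y_j}|^2+\ep^{2/N})\to\log^2(|s_{Y_j}|^2)$, then produces \eqref{bt702}.

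The most delicate step will be the restriction identity $U_\ep|_Y=u$: one has to show, via the Poincar\'e residue interpretation, that the weighted $L^2$-bound on the (near-$Y$ holomorphic) form $v_\ep$ genuinely rules out a nonzero residue along each component $Y_i$, including at the snc singular points lying in $V_{\rm sing}$. In practice this is handled by first fixing a slightly larger open set $V'_{\rm sing}\supset V_{\rm sing}$ where the residue argument is elementary near smooth points of $Y$, and then letting $V'_{\rm sing}$ shrink back to $V_{\rm sing}$.
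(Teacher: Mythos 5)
Your functional-analytic skeleton is the same as the paper's: define the functional on $E=\{\dbar^\star\xi\}$ with the $\prod\log^2(|s_{Y_j}|^2+\ep^{2/N})$-weighted norm, apply Hahn--Banach and Riesz, use the non-integrability of $|s_Y|^{-2}$ to kill the values of the solution along $Y$ (so that $U_\ep=\rho_\ep U_0-v_\ep$ restricts to $u$ on $Y\setminus V_{\rm sing}$), and then let $\ep\to 0$. The genuine problem is your extension of $U_\ep$ across $V_{\rm sing}$: you describe $V_{\rm sing}\cup|H|$ as a ``codimension-at-least-one analytic subset'' and invoke $L^2$ removable singularities. But $V_{\rm sing}$ is an \emph{open} neighborhood of $Y_{\rm sing}$, not an analytic set, and -- more to the point -- the test forms in \eqref{eq111} are supported in $X\setminus(V_{\rm sing}\cup|H|)$, so the identity furnished by Riesz gives no information at all about $\dbar U_\ep$ inside $V_{\rm sing}$: there is no equation there to which any removability theorem could be applied. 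The correct mechanism, and the one the paper uses, is the Hartogs phenomenon: after genuine $L^2$-removability across the analytic set $H$ only, $U_\ep$ is holomorphic on $X\setminus V_{\rm sing}$, and since $V_{\rm sing}$ may be taken to be a small neighborhood of the codimension-two set $Y_{\rm sing}$, holomorphic extension across it follows from Hartogs (H\"ormander, Theorem 2.3.2, in the surface case; slicing in general). Without this replacement your claim that $U_\ep$ is holomorphic on all of $X$ -- and hence the restriction identity on $Y\cap V_{\rm sing}$ via the identity principle -- does not go through as written.

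A second, smaller gap is in the limit $\ep\to 0$: you establish uniform $L^2$ bounds and locally uniform convergence only on compact subsets of $X\setminus Y$, and then assert $U|_Y=u$; convergence away from $Y$ by itself does not preserve the boundary value. What saves the argument is that the weighted estimate for $U_\ep$ (the analogue of \eqref{bt900}) is uniform in $\ep$ on neighborhoods of points of $Y\setminus V_{\rm sing}$, because $|s_Y|^{2}\prod\log^2(|s_{Y_j}|^2+\ep^{2/N})$ is bounded above there uniformly in $\ep$; hence the normal-family argument applies up to $Y\setminus V_{\rm sing}$, the limit still restricts to $u$ there, and one concludes on $Y\cap V_{\rm sing}$ by the identity principle on each component. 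This is the point the paper compresses into ``the conclusion follows'', but your write-up should make it explicit, since otherwise the extracted limit could a priori lose the interpolation property.
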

\begin{proof}
  This is done as in the classical case, by considering the vector subspace
\begin{equation}\label{bt703}
E:= \big\{\dbar^\star\xi : \xi \in C^2_c\big(X\setminus (V_{\rm sing}\cup H)\big)\big\}
\end{equation}
 of the $L^2_{n,0}(X, Y+L)$ forms, endowed with the scalar product induced by 
\begin{equation}\label{bt830}
\Vert \rho\Vert^2:= \int_X|\rho|^2 e^{-\phi_L}\prod\log^2(|s_{Y_j}|^2+ \ep^{2/N})dV_{\omega}.
\end{equation}
The functional
\begin{equation}\label{bt704}
  \dbar^\star\xi\to \int_X\dbar \left(\rho_\ep U_0\right)\wedge \ol{\gamma_{\xi}}e^{-\phi_L}
\end{equation}
is well-defined and bounded on $E$ by \eqref{eq111}, hence it extends by Hahn-Banach. The representation theorem of Riesz implies that there exists some form $$\displaystyle v\in L^2_{n,0}(X, Y+ L)$$ such that we have
\begin{equation}\label{bt705}
  \int_X\left\langle \dbar \left(\rho_\ep U_0\right), {\xi}\right\rangle e^{-\phi_L}=
  \int_X \left\langle v, \dbar^\star \xi \right\rangle e^{-\phi_L}\prod\log^2(|s_{Y_j}|^2+ \ep^{2/N})dV_{\omega_L}
\end{equation}
for all test forms $\xi\in E$ and such that 
\begin{equation}\label{bt706}
  \int_X|v|^2e^{-\phi_L}\prod\log^2(|s_{Y_j}|^2+ \ep^{2/N})dV_{\omega_L}\leq C_\ep(U_0).
\end{equation}
The equation \eqref{bt705} shows that we have
\begin{equation}\label{bt831}
\dbar\left(\rho_\ep U_0\right)= \dbar\left(\prod\log^2(|s_{Y_j}|^2+ \ep^{2/N})v\right)
\end{equation}
on $X\setminus V_{\rm sing}$. On the other hand, the form
\begin{equation}\label{bt832}
\rho_\ep U_0- \left(\prod\log^2(|s_{Y_j}|^2+ \ep^{2/N})\right)v
\end{equation}
is in $L^2(X\setminus V_{\rm sing})$: this is clear for the first term, as for the second one it is a consequence of \eqref{bt706}. 
\medskip

\noindent We infer that the form
\begin{equation}\label{bt833}
U_\ep:= \rho_\ep U_0- \left(\prod\log^2(|s_{Y_j}|^2+ \ep^{2/N})\right)v
\end{equation}
extends holomorphically on $X\setminus V_{\rm sing}$. This implies that
$\displaystyle v|_{X\setminus V_{\rm sing}}$ is non-singular, in particular $v$
equal zero when restricted to $Y\setminus V_{\rm sing}$ --given the estimates
in \eqref{bt706}.

\noindent Therefore we infer the equality

\begin{equation}\label{bt835}
U_\ep |_{Y\setminus V_{\rm sing}}= u.
\end{equation} 
We remark that $U_\ep$ extends to $X$ by theorem of Hartog's. This is clear if
$X$ is a surface cf. e.g. \cite{Hor}, Theorem 2.3.2. The general case follows as a
consequence of this, by a simple argument of slicing which we will not detail here.

\noindent Finally, the estimate for the $L^2$
norm of $U$ is deduced from \eqref{bt706}: we have
\begin{equation}\label{bt900}
\int_{X\setminus V_{\rm sing}}\frac{|U_\ep|^2}{|s_Y|^2\prod\log^2(|s_{Y_j}|^2+ \ep^{2/N})}e^{-\varphi_L- \varphi_Y}
\leq C_\ep(U_0)
\end{equation}
modulo a quantity which tends to zero as $\ep \to 0$. The conclusion follows.
\end{proof}

\subsubsection{End of the proof} We prove now the inequality \eqref{eq111}. As we have already mentioned, one of the main part of the proof is based on the a-priori estimate \eqref{eq65} which we derive here from
Theorem \ref{BoB++} combined with the results established in the first part of
section 3.
\medskip

\noindent We start with the following technical result, which plays a key role in the arguments to come. In order to simplify the notations, we write $\xi$ instead of the quotient $\displaystyle \frac{1}{s_Y}\xi$. 
\begin{proposition}\label{unifor1} Consider the orthogonal decomposition $\xi= \xi_1+ \xi_2$. Then the following hold: for each positive $\ep$ we have
\begin{align}
  \sum_i c_{n-1}\int_X&\frac{\ep^2}{(\ep^2+ |s_Y|^2)^2}\gamma_{\xi_1}\wedge \ol {\gamma_{\xi_1}}e^{-\varphi_L}\wedge
  \sqrt{-1}\partial s_i\wedge\ol{\partial s_i} \nonumber \\
  \leq & \label{end30}C\int_X\prod\log^2(\ep^{2/N}+ |s_{j}|^2)\left(|\dbar^\star\xi|^2\right)e^{-\varphi_L}dV_{\omega_\mathcal{C}} \\     
\nonumber\end{align}  
where $N$ is the number of components of $Y$.
\end{proposition}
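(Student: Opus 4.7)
My plan is to combine the approximation Lemma \ref{lim2} with $N$ successive applications of the Bochner--Kodaira scheme that proves Theorem \ref{bobapprox}, one for each irreducible component $Y_i$ of $Y$.

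\smallskip
\emph{Step 1: Reduction to the approximants.} By Lemma \ref{lim2}, the form $\xi_1$ is a uniform-on-compacts limit of the orthogonal projections $\xi_1^{(m,\delta)}$ associated to the complete K\"ahler manifolds $(\Omega_m, \omega_{m,\delta})$. Each $\xi_1^{(m,\delta)}$ is smooth and $\dbar$-closed, and $\dbar^\star \xi_1^{(m,\delta)} = \dbar^\star\xi$ because $\xi_2^{(m,\delta)}$ belongs to the closure of $\mathrm{Range}(\dbar^\star)$. I would prove the estimate for each $\xi_1^{(m,\delta)}$ on $(\Omega_m, \omega_{m,\delta})$ and conclude by the monotonicity of norms \eqref{orth19} together with Fatou's lemma. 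Since $h_L$ is non-singular on $\Omega_m = X\setminus H$, the integration-by-parts computations of Theorem \ref{bobapprox} are fully justified on each approximant.

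\smallskip
\emph{Step 2: Individual Bochner estimates.} For each $i=1,\ldots,N$, I would rerun the two-stage weight argument from the proof of Theorem \ref{bobapprox}, replacing the divisor $Y$ and its section $s_Y$ by the component $Y_i$ and its section $s_i$, and taking $\tau=\ep^{1/N}$. The curvature assumption $\Theta_{h_L}(L)\ge \delta\Theta_{h_{Y_i}}(Y_i)$ required by the argument follows from $\Theta_{h_L}(L)\ge \delta\sum_j\Theta_{h_{Y_j}}(Y_j)$ and the nonnegativity of each summand. The resulting estimate is
\begin{equation*}
c_{n-1}\int_X\frac{\ep^{2/N}}{(\ep^{2/N}+|s_i|^2)^2}\,\gamma_{\xi_1}\wedge\ol{\gamma_{\xi_1}}\wedge\sqrt{-1}\partial s_i\wedge\ol{\partial s_i}\,e^{-\varphi_L}\le C\int_X\log^2(\ep^{2/N}+|s_i|^2)\,|\dbar^\star\xi|^2 e^{-\varphi_L}\,dV_{\omega_{\mathcal{C}}}.
\end{equation*}

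\smallskip
\emph{Step 3: Pointwise comparison and summation.} To upgrade these $N$ estimates to the joint inequality \eqref{end30}, I would use the elementary pointwise bound $\prod_{j=1}^N(\ep^{2/N}+|s_j|^2)\ge \ep^2+|s_Y|^2$, which gives $\frac{1}{(\ep^2+|s_Y|^2)^2}\le \prod_j (\ep^{2/N}+|s_j|^2)^{-2}$. Writing $\ep^2=\ep^{2/N}\cdot\ep^{2(N-1)/N}$, one obtains, for each $i$,
\begin{equation*}
\frac{\ep^2}{(\ep^2+|s_Y|^2)^2}\le \ep^{2(N-1)/N}\prod_{j\ne i}(\ep^{2/N}+|s_j|^2)^{-2}\cdot\frac{\ep^{2/N}}{(\ep^{2/N}+|s_i|^2)^2}.
\end{equation*}
On the RHS of Step 2, the factor $\log^2(\ep^{2/N}+|s_i|^2)$ is dominated by the target weight $\prod_j\log^2(\ep^{2/N}+|s_j|^2)$ because on the support of $\dbar^\star\xi\subset X\setminus V_{\mathrm{sing}}$ the factors for $j\ne i$ are uniformly bounded below. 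Summing over $i$ and invoking Step 1 yields \eqref{end30}.

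\smallskip
\emph{Main obstacle.} The delicate point is Step 3. The pointwise ratio $\frac{\ep^2/(\ep^2+|s_Y|^2)^2}{\ep^{2/N}/(\ep^{2/N}+|s_i|^2)^2}$ is not uniformly bounded: it can blow up near the locus where several components of $Y$ meet simultaneously, which is precisely where the product of logarithms in the target estimate grows like $(\log\ep)^{2N}$ and absorbs the loss. Ensuring that the constant $C$ in the final inequality stays independent of $\ep$---and correctly handling the fact that $\xi_1$ is a global form, not necessarily supported inside the support of $\xi$---is the technical core of the argument, although no new analytic tool beyond Theorem \ref{bobapprox} and Lemma \ref{lim2} is required.
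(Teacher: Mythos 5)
Your Step 1 is exactly the right reduction and matches the paper: completeness of $\omega_{m,\delta}$ gives $\dbar^\star\xi_1^{(m,\delta)}=\dbar^\star\xi$, and Lemma \ref{lim2} together with the monotonicity \eqref{orth19} lets you pass to the limit on compact subsets. The trouble is in Steps 2 and 3. First, running the Bochner argument of Theorem \ref{bobapprox} for a single component $Y_i$ requires $\Theta_{h_L}(L)\ge\delta\,\Theta_{h_{Y_i}}(Y_i)$, and your justification --- that this follows from $\Theta_{h_L}(L)\ge\delta\sum_j\Theta_{h_{Y_j}}(Y_j)$ by ``nonnegativity of each summand'' --- is not valid: the curvature forms $\Theta_{h_{Y_j}}(Y_j)$ of smooth metrics on the individual components are in general not semipositive, so you cannot drop the terms $j\ne i$; hypothesis (a) only controls the curvature of the total divisor $Y$. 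Second, and more seriously, the patching in Step 3 does not close. The ratio $\dfrac{\ep^2/(\ep^2+|s_Y|^2)^2}{\ep^{2/N}/(\ep^{2/N}+|s_i|^2)^2}$ is of size $\ep^{-2(N-1)/N}$ at points where all $|s_j|^2\sim\ep^{2/N}$, i.e.\ near $Y_{\rm sing}$, so the loss is a \emph{power} of $\ep$; the factor $\prod_j\log^2(\ep^{2/N}+|s_j|^2)\sim(\log\ep)^{2N}$ cannot absorb a power loss, and at this stage you have no pointwise control of $\gamma_{\xi_1}$ on that region (the mean-value bound of Theorem \ref{meanV} enters only later, and only on $V_{\rm sing}$). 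So the key inequality of Step 3 is asserted, not proved.

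The fix is to avoid the per-component decomposition altogether, which is what the paper does: apply Theorem \ref{bobapprox} once on each $(\Omega_m,\omega_{m,\delta})$, with the full section $s_Y$ and $\tau=\ep$. This produces exactly the left-hand weight $\frac{\ep^2}{(\ep^2+|s_Y|^2)^2}\,\sqrt{-1}\partial s_Y\wedge\ol{\partial s_Y}$ with no patching, and the only comparison needed is on the harmless side of the estimate, namely the pointwise bound $\log^2(\ep^2+|s_Y|^2)\le C\prod_j\log^2(\ep^{2/N}+|s_j|^2)$, which holds (with the normalisation $|s_j|^2\le e^{-\delta}$) because one only enlarges the right-hand weight. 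The curvature hypothesis used is then precisely the one granted in (a), $\Theta_{h_L}(L)\ge\delta\,\Theta_{h_Y}(Y)$ for the total divisor; combining this single application with your Step 1 yields \eqref{end30} directly.
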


\noindent Remark that we have the equality $\displaystyle |\dbar^\star\mu|^2e^{-\varphi_L}= |\dbar^\star\xi|^2e^{-\phi_L}$ if $\displaystyle \mu= \frac{1}{s_Y}\xi$, so the estimate \eqref{end30} is precisely what we have to prove.

\begin{proof} We recall the context in section 3: we have considered
an exhaustion 
\begin{equation}\label{end31}
X\setminus H= \bigcup \Omega_m
\end{equation}
where each $\Omega_m$ was a Stein domain with smooth boundary,
endowed with the family of complete metrics $\omega_{m, \delta}$ cf.
\eqref{orth16}. The restriction of $\xi$ to each $\Omega_m$ decomposes as follows
\begin{equation}\label{end32}
\xi|_{\Omega_m}= \xi_1^{(m,\delta)}+ \xi_2^{(m, \delta)}
\end{equation}
according to $(\Omega_m, \omega_{m, \delta})$ and $(L, h_L)$. 

\noindent We apply the inequality in Theorem \ref{bobapprox} for
$\xi_1^{(m,\delta)}$ and we get
\begin{align}
  \sum_i c_{n-1}\int_{\Omega_m}&\frac{\ep^2}{(\ep^2+ |s_Y|^2)^2}
                                 \gamma_{\xi_1^{(m,\delta)}}\wedge \ol {\gamma_{\xi^{(\ep)}_1}}e^{-\varphi_L}\wedge
  \sqrt{-1}\partial s_Y\wedge\ol{\partial s_Y} \nonumber \\
  \leq & \label{fin5} C\int_{\Omega_m}\prod\log^2(\ep^{2/N}+ |s_{j}|^2)\left|\dbar^\star(\xi_1^{(m,\delta)})\right|^2e^{-\varphi_{L}}dV_{\omega_{m, \delta}}. \\       
\nonumber\end{align}

\noindent Indeed we can use Theorem \ref{bobapprox} in this context even if
the form does not have compact support because 
the metric $\omega_{m, \delta}$ is complete. This has another consequence: we have the equality $\dbar^\star(\xi_1^{(m,\delta)})= \dbar^\star(\xi)$.
For the inequality \eqref{fin5} we have used the inequality
$$\log^2(\ep^2+ |s_Y|^2)\leq C \prod\log^2(\ep^{2/N}+ |s_{j}|^2)$$
where $N$ is the number of components of $Y$.

Let $K\subset X$ be any open set with compact closure in $X\setminus H$; for any $m\geq m_0(K)$ we have $\ol K\subset \Omega_m$ 
so the inequality \eqref{fin5} implies
\begin{align}
  \sum_i c_{n-1}\int_{K}&\frac{\ep^2}{(\ep^2+ |s_Y|^2)^2}
                                 \gamma_{\xi_1^{(m,\delta)}}\wedge \ol {\gamma_{\xi_1^{(m,\delta)}}}e^{-\varphi_L}\wedge
  \sqrt{-1}\partial s_Y\wedge\ol{\partial s_Y} \nonumber \\
  \leq & \label{fin6} C\int_{\Omega_m}\prod\log^2(\ep^{2/N}+ |s_{j}|^2)\left|\dbar^\star\xi\right|_{\omega_{m, \delta}}^2e^{-\varphi_{L}}dV_{\omega_{m, \delta}}. \\       
\nonumber\end{align}
Now the support of $\xi$ is a compact contained in $X\setminus H$, so if $m$ is large enough the boundary of $\Omega_m$ is disjoint from $\Supp(\xi)$.
A limit process (i.e. $\delta\to 0, m\to \infty$), together with Lemma \ref{lim2} implies that we have 
\begin{align}
  \sum_i c_{n-1}\int_{K}&\frac{\ep^2}{(\ep^2+ |s_Y|^2)^2}
                                 \gamma_{\xi_1}\wedge \ol {\gamma_{\xi_1}}e^{-\varphi_L}\wedge
  \sqrt{-1}\partial s_Y\wedge\ol{\partial s_Y} \nonumber \\
  \leq & \label{fin7} C\int_X \prod\log^2(\ep^{2/N}+ |s_{j}|^2)\left|\dbar^\star\xi\right|_{\omega_{\cC}}^2e^{-\varphi_{L}}dV_{\omega_{\cC}}. \\       
\nonumber\end{align}

\noindent The compact subset $K$ in \eqref{fin7} is arbitrary, 
so Proposition \eqref{unifor1} is proved.
\end{proof}

\medskip

\noindent We are now ready to finish the proof of Theorem \ref{otconique}.
Consider the integral
\begin{equation}\label{eq118}
\int_X\left \langle \dbar \left(\rho_\ep U_0\right), {\xi}\right\rangle e^{-\phi_L}
dV_{\omega_{\cC}}
\end{equation}
which up to a sign equals 
\begin{equation}\label{fin8}
\int_X\rho'\left(\frac{|s_Y|^2}{\ep^2}\right)
\left\langle U_0\wedge \frac{\ol{\partial s_Y}}{\ep^2}, {\frac{\xi}{s_Y}}\right\rangle e^{-\varphi_L}
dV_{\omega_{\cC}}. 
\end{equation}
We decompose as usual $\displaystyle \frac{\xi}{s_Y}= \xi_1+ \xi_2$ and then \eqref{fin8}
becomes
\begin{equation}\label{fin9}
\int_X\rho'\left(\frac{|s_Y|^2}{\ep^2}\right)
\left\langle U_0\wedge \frac{\ol{\partial s_Y}}{\ep^2}, \xi_1\right\rangle e^{-\varphi_L}
dV_{\omega_{\cC}}= \int_X\rho'\left(\frac{|s_Y|^2}{\ep^2}\right)
U_0\wedge \ol{\partial s_Y\wedge \gamma_{\xi_1} }\frac{e^{-\varphi_L}}{\ep^2}
\end{equation} 

\noindent We split its evaluation into two parts. The first one is
\begin{equation}\label{eq119}
  \int_{X\setminus V_{\rm sing}}\rho'\left(\frac{|s_Y|^2}{\ep^2}\right)
U_0\wedge \ol{\partial s_Y\wedge \gamma_{\xi_1} }\frac{e^{-\varphi_L}}{\ep^2}
\end{equation}
and by Cauchy-Schwarz inequality the square of its absolute value is smaller than
\begin{equation}\label{eq120}
\int_{K_\ep} |U_0|^2\frac{e^{-\varphi_L}}{\ep^2}dV_{\omega_{\cC}}\cdot  \int_{K_{\ep}}
|\partial s_Y\wedge \gamma_{\xi_1}|^2_{\omega_{\cC}}\frac{e^{-\varphi_L}}{\ep^2}dV_{\omega_{\cC}}
\end{equation}
where $K_\ep$ is the support of the function $\rho'\left(\frac{|s_Y|^2}{\ep^2}\right)$. We remark that we have 
\begin{equation}\label{fin10}
\frac{1}{\ep^2}\simeq \frac{\ep^2}{(\ep^2+ |s_Y|^2)^2}
\end{equation}
on the set $K_\ep$. Therefore, the second factor of the product \eqref{eq120}
is smaller than 
\begin{equation}\label{fin11}
C\int_X\prod\log^2(\ep^2+ |s_{j}|^2)\left(|\dbar^\star\xi|^2\right)e^{-\varphi_L}dV_{\omega_{\cC}}
\end{equation}
by Proposition \ref{unifor1}.
\smallskip

\noindent The rest of the integral \eqref{fin9} is analysed as follows. For simplicity we assume that 
$V_{\rm sing}= \Omega$ is a coordinate subset and the expression we have to evaluate is
\begin{equation}\label{eq121}
\left| \frac{1}{\ep^2}\int_{\Omega}\rho'\left(\frac{|s_Y|^2}{\ep^2}\right)U_0\wedge \ol{\partial s_Y\wedge \gamma_{\xi_1} }e^{-\varphi_L}dV_{{\omega_{\cC}}}\right|.
\end{equation}
This is bounded by the quantity
\begin{equation}\label{fin1111} 
\sup_\Omega(|\xi_1|^\alpha _{\varphi_L}) \left| \frac{1}{\ep^2}\int_{\Omega}\rho'\left(\frac{|s_Y|^2}{\ep^2}\right)   
|\partial s_Y|^\alpha |U_0|_{\omega_{\cC}} |\partial s_Y\wedge \gamma_{\xi_1}|^{1-\alpha}e^{-(1-\alpha/2)\varphi_L}dV_{{\omega_{\cC}}}\right|
\end{equation}
and H{\"o}lder inequality shows that \eqref{fin1111} is smaller than the product of 
\begin{equation}\label{fin12} 
\sup_\Omega(|\xi_1|^\alpha  _{\varphi_L}) \left(\int_{\Omega\cap K_\ep} 
|\partial s_Y\wedge \gamma_{\xi_1}|^{2}\frac{e^{-\varphi_L}}{\ep^2}dV_{{\omega_{\cC}}}\right)^{\frac{1-\alpha}{2}}
\end{equation}
with 
\begin{equation}\label{fin13}
\left(\int_{\Omega} \rho'\left(\frac{|s_Y|^2}{\ep^2}\right)  
|\partial s_Y|^{\frac{2\alpha}{1+\alpha}} |U_0|_{\omega_{\cC}}^{\frac{2}{1+\alpha}}
\frac{e^{-\frac{\varphi_L}{1+\alpha}}}{\ep^2}dV_{{\omega_{\cC}}}\right)^{\frac{1+\alpha}{2}}
\end{equation}

\noindent The limit of the quantity \eqref{fin13} as $\ep \to 0$ is equal to 
\begin{equation}\label{fin14}
\left(\int_{\Omega\cap Y}   
\left|\frac {u}{\partial s_Y}\right|_{\omega_{\cC}}^{\frac{2}{1+\alpha}}e^{-\frac{\varphi_L}{1+\alpha}}dV_{{\omega_{\cC}}}\right)^{\frac{1+\alpha}{2}}.
\end{equation}
As for the \eqref{fin12}, we use Theorem \ref{meanV} together with our previous considerations and it follows that it is smaller than
\begin{equation}\label{fin15}
C\left(\int_\Omega
|\xi_1|^2 e^{-\varphi_L}dV_{\omega_{\cC}}\right)^{\alpha/2} \left(\int_X\prod\log^2(\ep^{2/N}+ |s_{j}|^2)\left(|\dbar^\star\xi|^2\right)e^{-\varphi_L}dV_{{\omega_{\cC}}}\right) ^{\frac{1-\alpha}{2}}.
\end{equation}

\noindent It is at this point that we are using the positivity assumption (i): we have
\begin{equation}\label{eq123}
  \int_{V_{\rm sing}}|\xi_1|^2e^{-\varphi_L}dV_{\omega_{\cC}}\leq \frac{1}{C_{\rm sing}}\int_{V_{\rm sing}}
  \big\langle [\Theta_{h_L}(L), \Lambda_{\omega_{\cC}}]\xi_1, \xi_1\big\rangle e^{-\varphi_L}dV_{\omega_{\cC}},
\end{equation}
where $C_{\rm sing}$ is the (positive) lower bound for the positivity of $\displaystyle (L, h_L)|_{V_{\rm sing}}$.
By Bochner formula we get
\begin{equation}\label{eq124}
\int_{V_{\rm sing}}|\xi_1|^2e^{-\varphi_L}dV_{{\omega_{\cC}}}\leq \frac{1}{C_{\rm sing}} \int_X\left|\dbar^\star\xi_1\right|^2e^{-\varphi_L}dV_{{\omega_{\cC}}} .
\end{equation}
\medskip

\noindent Thus we obtain the expected estimate for the functional \eqref{fin9}
and Theorem \ref{otconique} is proved. We remark that the contribution of the singularities of $Y$ to the estimate in this result is
\begin{equation}\label{est0}
C\left(1+ \frac{1}{C_{\rm sing} ^\alpha}\right) \left(\int_{\Omega\cap Y}   
\left|\frac {u}{\partial s_Y}\right|_{\omega_{\cC}}^{\frac{2}{1+\alpha}}e^{-\frac{\varphi_L}{1+\alpha}}dV_{{\omega_{\cC}}}\right)^{1+\alpha}
\end{equation}
where $C$ is a constant depending on $(X, V_{\rm sing}, {\omega_{\cC}})$ and $\alpha\in [0, 1]$ is an arbitrary positive real which is smaller than 1.   
\medskip

\begin{remark} The quantity \eqref{est0} is part of the term estimating 
 \begin{equation}\label{est1}
 \int_{X\setminus V_{\rm sing}}\frac{|U|^2}{|s_Y|^2\prod\log^2(|s_{j}|^2)}e^{-\varphi_L-\varphi_Y}dV_{\omega_{\cC}}.
 \end{equation}
A slight modification of the proof shows that we can get a similar estimate for the integral
\begin{equation}
 \int_{X\setminus V_{\rm sing}}\frac{|U|^2}{|s_Y|^2\log^{2+\tau}(1/|s_{Y}|^2)}e^{-\varphi_L-\varphi_Y}dV_{\omega_{\cC}}
 \end{equation}
 for any strictly positive real $\tau$. 
\end{remark}

\medskip

\begin{remark}
Actually one can replace the curvature condition (i) with the following:
\emph{there exists a constant $C_{\rm sing}> 0$ such that we have}
\begin{equation} 
\Theta_{h_L}(L)\geq \frac{C_{\rm sing}}{\log\frac{1}{|s_Y|^2}}{\omega_{\cC}}
\end{equation}
\emph{pointwise on $V_{\rm sing}$}. The estimate for the extension
we obtain in the end is the same, but we are using a twisted Bochner formula instead of \eqref{eq124}. 
\end{remark}


\subsection{Proof of Theorem \ref{ot}} In this subsection $\omega$ is a fixed reference
K\"ahler metric on $X$ (in particular, non-singular).

\noindent By hypothesis, the metric $h_L$ is
non-singular and in this case the equality
\begin{equation}\label{fin20}
\int_Y\frac{u}{ds_Y}\wedge \ol{\gamma_{\xi}}e^{-\varphi_L}= \int_Y\frac{u}{ds_Y}\wedge \ol{\gamma_{\xi_1}}e^{-\varphi_L}
\end{equation}   
is immediate.

We remark that the restriction $\displaystyle \frac{u}{ds_Y}\Big|_{Y_j}$ is holomorphic, for each component $Y_j$ of $Y$. This is where the vanishing of $u$ on the singularities of $Y$ is used. We decompose the restriction of $\displaystyle \gamma_{\xi_1}$ to $Y_j$ as follows
\begin{equation}\label{fin21}
\gamma_{\xi_1}|_{Y_j}= \alpha_j+ \beta_j
\end{equation}
where $\alpha_j$ is holomorphic and $\beta_j$ is orthogonal to the space of $L$-valued holomorphic top forms on $Y_j$. Then we have
\begin{equation}\label{fin22}
\int_{Y_j}\frac{u}{ds_Y}\wedge \ol{\gamma_{\xi_1}}e^{-\varphi_L}=
\int_{Y_j}\frac{u}{ds_Y}\wedge \ol{\alpha_j}e^{-\varphi_L}
\end{equation}  
Let $x_0$ be a singular point of $Y$. We then have coordinates $(z_1, z_2)$ defined on a open subset $x_0\in V$, centered at $x_0$ and such that $(z_k= 0)= Y_k\cap V$ for each $k=1, 2$. We equally fix a trivialization of $L|_V$ and let 
$\varphi_L$ be the corresponding weight of the metric $h_L$. We write
\begin{equation}\label{fin23}
u|_{V}= f_udz_1\wedge dz_2\otimes e_L
\end{equation}
and let $\theta$ be a function which is equal to 1 near $x_0$ and such that $\Supp(\theta)\subset V$.

\noindent Thus we have 
\begin{equation}\label{fin24}
\frac{u}{ds_Y}\big|_{V\cap Y_1}= f_u\frac{dz_{2}}{z_{2}}\otimes e_L
\end{equation}
together with a similar equality on $V\cap Y_2$. We can write
\begin{align}\label{fin25}
\partial_{\varphi_L}\left(\theta f_u\log|z_2|^2\otimes e_L\right)= & \theta f_u\frac{dz_{2}}{z_{2}}\otimes e_L \\
+ & \theta \log|z_2|^2\partial_{\varphi_L}(f_u\otimes e_L)+ 
f_u \log|z_2|^2\partial\theta\otimes e_L\nonumber \\
\nonumber
\end{align}
and then we observe that the left hand side term of \eqref{fin25} is $\partial_{\varphi_L}$--exact
on $Y_1$. Therefore we have 
\begin{equation}\label{fin26}
\int_{Y_1}\partial_{\varphi_L}\left(\theta f_u\log|z_2|^2\otimes e_L\right)\wedge \ol{\alpha_1}e^{-\varphi_L}= 0
\end{equation}
since $\alpha_1$ is holomorphic. We infer that we have 
\begin{align}\label{fin27}
-\int_{Y_1}\theta\frac{u}{ds_Y}\wedge \ol{\alpha_1}e^{-\varphi_L}= & 
\int_{Y_1}\theta \log|z_2|^2\partial_{\varphi_L}(f_u\otimes e_L)\wedge \ol{\alpha_1}e^{-\varphi_L}\nonumber\\
+ &\int_{Y_1}f_u \log|z_2|^2\partial\theta\otimes e_L\wedge \ol{\alpha_1}e^{-\varphi_L}
\nonumber\\
\nonumber
\end{align}
and all that we still have to do is to apply the Cauchy-Schwarz inequality to each of the two terms of the RHS of the inequality above.

\noindent A last remark is that we have 
\begin{equation}\label{fin28}
\int_{Y_1}|\alpha_1|^2e^{-\varphi_L}\leq \int_{Y_1}|\gamma_{\xi_1}|^2e^{-\varphi_L}
\end{equation}
by the definition of $\alpha_1$ and $\beta_1$. We use the a-priori inequality and we conclude as in Theorem \ref{otconique}.\qed
\medskip

\begin{remark} In the absence of hypothesis
  $u|_{Y_{\rm sing}}\not\equiv 0$ the evaluation of the term \eqref{fin20} near the singularities of $Y$ is problematic. In the decomposition \eqref{fin21}, we write $\beta_j= \dbar^\star(\tau_j)$,
  and then the question is to estimate the quotient $$f_j:= \frac{\tau_j}{\omega} $$ at the points of $Y_{\rm sing}$. This does not seem to be possible, since we only have the norm $W^{1,2}$ of $f_j$ at our disposal. Indeed, the quantity
  $\dbar \beta_j$ is equal to the restriction of the form
  $\displaystyle \dbar \gamma_{\xi_1}$ to $Y_j$. In Question \ref{quest1001} we provide a few more 
 precisions about this matter. 
  
\end{remark}
\medskip


\subsection{Proof of Theorem \ref{otflat}}
  This is another set-up in which the considerations above work, as follows.
  We recall that the metric of $h_L$ of $L$ satisfies the hypothesis (a) and (b) at the beginning \emph{and} moreover $(L, h_L)$ is flat near the singularities of $Y$, i.e.
\begin{equation}\label{flat1}
\Theta_{h_L}(L)\big|_{V_{\rm sing}}= 0.
\end{equation}
Then we get an estimable extension as follows. Let $\omega$ be a fixed
K\"ahler metric on $X$. As in the proof of the preceding result Theorem \ref{ot},
we will use the method of Berndtsson \cite{BB1}, so the quantity to be bounded is
\begin{equation}\label{flat2}
\int_{Y\cap V_{\rm sing}}\frac{u}{ds_Y}\wedge \ol{\gamma_{\xi_1}}e^{-\varphi_L}.
\end{equation}  
Integration by parts shows that it is enough to obtain a mean value
inequality for the function
\begin{equation}\label{flat3}
\sup_{1/2V_{\rm sing}}|\dbar \gamma_{\xi_1}|^2= \sup_{1/2V_{\rm sing}}|\partial^\star_{\varphi_L}\xi_1|^2.
\end{equation}
This is done according to the same principle as before. In the first place the differential inequality satisfied by $\displaystyle |\partial^\star_{\varphi_L}\xi_1|^2$ is as follows
\begin{equation}\label{flat4}
\Delta''\left(|\partial^\star_{\varphi_L}\xi_1|^2\right)\geq |\nabla \left(\partial^\star_{\varphi_L}\xi_1\right)|^2- C|\partial^\star_{\varphi_L}\xi_1|^2
\end{equation}
for some constant $C> 0$ which only depends on (the curvature of) $\displaystyle\omega |_{V_{\rm sing}}$. We will not detail the calculation here because this is very similar with the one in the proof of Theorem \ref{otconique}. However, we highlight next the main differences:
\begin{enumerate}
\smallskip

\item[(1)] It is not necessary to introduce any regularization of the metric,
  since by hypothesis \eqref{flat1} the restriction $\displaystyle h_L|_{V_{\rm sing}}$ is non-singular.

\smallskip

\item[(2)] Without any additional information about $(L, h_L)$, the term $\displaystyle \left\langle \partial^\star_{\varphi_L}\xi_1, \square \partial^\star_{\varphi_L}\xi_1\right\rangle $ is problematic. Actually \eqref{flat1} is needed precisely in order to deal with it: it the curvature of $(L, h_L)|_{V_{\rm sing}}$ equals zero, then we have $\square \partial^\star_{\varphi_L}\xi_1= 0$ pointwise on $V_{\rm sing}$. In general we have the term
\begin{equation}\nonumber
\left\langle [\dbar, \Lambda_{\Theta_{h_L}(L)}]\xi_1, \partial^\star_{\varphi_L}\xi_1\right\rangle
\end{equation}
which appears in the computation and seems impossible to manage. 
\smallskip

\item[(3)] In the evaluation of the Laplacian of the norm of a harmonic tensor we have two terms: the gradient of the tensor, and several curvature terms
  corresponding to the metric on the ambient manifold and to the twisting, respectively. Here we don't have any contribution from $L$, and the term involving the curvature of $\omega$ is taken care by the constant $-C$ in
  \eqref{flat4}.
\end{enumerate}

\smallskip

\noindent Anyway, the inequality \eqref{flat4} can be re-written as
\begin{equation}\label{flat8}
  \Delta''\left(|\partial^\star_{\varphi_L}\xi_1|^2\right)
  \geq \big|\nabla \left|\partial^\star_{\varphi_L}\xi_1\right|\big|^2- C|\partial^\star_{\varphi_L}\xi_1|^2
\end{equation}
and this
combined with Moser iteration procedure shows that we have
\begin{equation}\label{flat6}
\sup_{1/2V_{\rm sing}}|\partial^\star_{\varphi_L}\xi_1|_{\omega, h_L}^{\alpha}\leq C\int_{V_{\rm sing}}|\partial^\star_{\varphi_L}\xi_1|^{\alpha}_{\omega, h_L}dV_\omega.
\end{equation}

Finally, the term that one (almost) never uses in Bochner formula shows that we have
\begin{equation}\label{flat7}
  \int_X|\partial^\star_{\varphi_L}\xi_1|^2_{\omega, h_L}dV_\omega\leq
  \int_X|\dbar^\star\xi_1|^2_{\omega, h_L}dV_{\omega}
\end{equation}
and we thus obtain the inequality
\begin{equation}
\sup_{1/2V_{\rm sing}}|\partial^\star_{\varphi_L}\xi_1|_{\omega, h_L}^{\alpha}\leq C\int_{V_{\rm sing}}|\dbar^\star_{\varphi_L}\xi_1|^{\alpha}_{\omega, h_L}dV_\omega.  
\end{equation}
Then we conclude as in Theorem \ref{ot}.\qed
\medskip

\begin{remark}
Actually in the proof of Theorem \ref{otflat} only needs to evaluate the $L^2$ norm 
\begin{equation}
\int_{Y}|\dbar \gamma_{\xi_1}|^2dV_{\omega}
\end{equation}
of $\dbar \gamma_{\xi_1}|_{Y}$. One might try to use a similar method as the one in section 2, but there are serious difficulties to overcome.    
\end{remark}
\medskip


\subsection{Proof of Theorem \ref{otflat1}} 
By hypothesis we know that $Y$ has one component $Y_1$ 
which only intersects $\displaystyle \cup_{i\neq 1} Y_i$ in a unique point
$p_0$ such that $u(p_0)\neq 0$. We also assume that $\displaystyle L|_{Y_1}$ is flat, in the sense that
  there exists a section $\tau$ 
  such that $\tau(p_0)\neq 0$ and $\displaystyle \partial_{\varphi_L} \tau=0$.
  Then we argue as follows.
  \smallskip
  
\noindent Let $\omega$ be a fixed, reference metric on $X$. On each component $Y_j$ of $Y$ we solve the equation 
\begin{equation}\label{po10}
\gamma_{\xi_1}|_{Y_j}= \alpha_j+ \dbar^\star\beta_j
\end{equation}
where $\alpha_j$ is holomorphic $(1,0)$ form and $\beta_j$ is of type $(1,1)$ on $Y_j$. We note that
by elliptic regularity the form $\beta_j$ is smooth. 

\noindent We have $\displaystyle \beta_j= f_j\omega|_{Y_j}$ and then the equality
\begin{equation}\label{po11}
\int_{Y_j}\big\langle\frac{u}{\sigma_jds_{Y_j}}, \dbar^\star\beta_j\big\rangle_{\omega} e^{-\varphi_L}dV_{\omega}= \sum_{x\in Y_{\rm sing}\cap Y_j} f_u(x)\ol{f_j(x)}e^{-\varphi_L(x)}
\end{equation}      
follows by the residues formula. Here we denote by $\displaystyle \sigma_j:= \prod_{i\neq j} s_{Y_i}$.

\noindent In case $j=1$, the sum above only has one term, by hypothesis. Since we have 
$\dbar^\star (\tau\omega)= 0$, we can modify the solution $f_1$ so that the global sum of residues is zero.

\begin{question}\label{quest1001}{\rm Let $p$ be one of the intersection points of two curves
$Y_1\cap Y_2$ in $X$. The analogue of the a-priori inequality in section 2 gives
\begin{equation}\label{qe2}
  |f_j(p)|^2e^{-\varphi_L(p)}\leq C
  \int_X\frac{\log^2\Vert s\Vert^2}{\Vert s\Vert^{2}}|\partial_{\varphi_L} f_j|^2e^{-\varphi_L}dV_\omega
\end{equation}
provided that the bundle $(L, h_L)$ has the right curvature hypothesis, let us assume this holds for the moment. In \eqref{qe2} we denote by  $C$ a constant which we can compute 
explicitly.
This a-priori inequality is obtained by considering the $(2,2)$--form with values in $L$
\begin{equation}\label{qe3}
 f_j\omega^2
 \end{equation}
 whose $\star$ coincides with the section $f_j$, and use the procedure Theorem 2.1
 for the function $\displaystyle w:= \frac{1}{|s_1|^2+ |s_2|^2}$. The curvature requirements this induces will most likely be 
\begin{equation}\label{qe4}
\frac{1}{\delta}\Theta_{h_L}(L)\wedge \omega\geq \frac{|s_1|^2\Theta(Y_1)+ |s_2|^2\Theta(Y_2)}{|s_1|^2+ |s_2|^2}\wedge \omega.
\end{equation} 
 We have 
\begin{equation}\label{qe5}
 \dbar f_j\omega^2= 0, \qquad \dbar^\star  f_j\omega^2= -\star \left(\partial_{\varphi_L}f_j \right)
 \end{equation}
which explains \eqref{qe2}. 

\smallskip
 
\noindent Anyway, by equality \eqref{po10} we control the norm
\begin{equation}
\int_Y|\partial_{\varphi_L} f_j|^2e^{-\varphi_L}dV_\omega.
\end{equation}
Then the question is:
}can we find a smooth section $\wt f_j$ of $L$ such that it equals $f_j$ on $Y$
and such that
\begin{equation}\label{qe1}
  \int_X\frac{\log^2\Vert s\Vert^2}{\Vert s\Vert^{2}}|\partial_{\varphi_L} \wt f_j|^2e^{-\varphi_L}dV_\omega\leq C\int_Y|\partial_{\varphi_L} f_j|^2e^{-\varphi_L}dV_\omega,
\end{equation}
where $C$ in \eqref{qe1} is universal?
\end{question}


\medskip

\appendix
\section{Further results and examples\\ \rm{(by Bo Berndtsson)}}\label{BoB}



\noindent In this appendix we will study two very simple model examples of $L^2$-extension from a non reduced or singular variety. It is the second example (in section 3) that is most relevant to the subject of the main paper. The main point is to show that it is not possible to obtain an estimate that is substantially better than Theorem \ref{otconique} and Theorem \ref{ot}. The role of the first example is to show that similar difficulties appear already in an even simpler situation, that can be analyzed in a more complete way.

In the first example we consider the space
$$
A^2_\phi(\Delta)=\{h\in H(\Delta); \int_\Delta |h|^2 e^{-\phi} d\lambda:=\|h\|^2<\infty \}
$$
of holomorphic functions in the disk $\Delta$  that are square integrable against a  weight $e^{-\phi}$. Given numbers $a_k$, $k=0, 1, ...N-1$, we will compute the minimal norm of a function $h\in A^2$ that satisfies $h^{(k)}(0)=a_k$. The formula we give is exact but not very explicit; it contains the Bergman kernel for the space and various metrics derived from it.  Therefore we will also discuss to what extent it is possible to estimate it in more concrete terms, and show that the most optimistic estimates fail.

In the second example we consider the unit polydisk $U$ in $\C^2$ and the singular variety $V=\{z\in U; z_1z_2=0\}$ in $U$. We again denote by $A^2_\phi(U)$ the Bergman space of  holomorphic functions in $U$ that are square integrable against the  weight $e^{-\phi}$. The extension problem is now to find a function $h\in A^2$ that restricts to a given function on $V$, i. e. satisfies $h=f_1$ when $z_1=0$ and $h=f_2$ when $z_2=0$, where $f_1$ and $f_2$ are holomorphic functions of one variable satisfying $f_1(0)=f_2(0)$. In this case we get only an {\it estimate} for the minimal extension, and we conclude by an example (basically due to Ohsawa, \cite{Oh7}) which (perhaps) can serve as a motivation for the statement in Theorem 1.4.  

\subsection{Extension from a (fat) point in the unit disk}

Let $A^2=A^2_\phi(\Delta)$ be defined as above and let for $k=0, 1, 2...$
$$
E_k=\{h\in A^2; h^{(j)}(0)=0, j<k \}.
$$
It follows from elementary Hilbert space theory that there is a unique function $h$ of minimal norm in $A^2$ satisfying $h^{(k)}(0)=a_k$ for $k=0, ...N-1$. Write
$$
h=h_0+r_1
$$
where $r_1\in E_1$ and $h_0\perp E_1$. Then we write
$$
r_1=h_1+r_2,
$$
with $r_2\in E_2$ and $h_1\perp E_2$. Continuing this way we get
$$
h=h_0+h_1+... h_{N-1} +r_N,
$$
with $h_k\in E_k\ominus E_{k+1}$ and $r_N$ in $E_N$. That $h$ has minimal norm means that $h$ is orthogonal to $E_N$, so $r_N=0$. By orthogonality we have
$$
\|h\|^2=\sum _0^{N-1} \|h_k\|^2,
$$
so the problem amounts to estimating the norms of $h_k$.

The spaces $E_k\ominus E_{k+1}$ are one dimensional. Let $ e_k$ be an element of unit length.
We start with a simple lemma from \cite{HW} whose proof follows almost directly from the definitions.
\begin{lemma}
  $$
  |e_k^{(k)}(0)|^2 =\sup_{f\in E_k} \frac{|f^{(k)}(0)|^2}{\|f\|^2}.
    $$
\end{lemma}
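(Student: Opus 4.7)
The plan is to exploit the one-dimensionality of $E_k \ominus E_{k+1}$ together with the fact that, in $E_k$, the functional $f \mapsto f^{(k)}(0)$ kills the subspace $E_{k+1}$.

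First I would write the orthogonal decomposition $E_k = \mathbb{C}\, e_k \oplus E_{k+1}$, which is immediate from the definition of $e_k$ as a unit vector spanning the one-dimensional complement. For any $f \in E_k$ I then have a unique representation
\begin{equation}\nonumber
f = \lambda e_k + g, \qquad \lambda \in \mathbb{C}, \ g \in E_{k+1},
\end{equation}
together with the Pythagorean identity $\|f\|^2 = |\lambda|^2 + \|g\|^2$ (since $\|e_k\|=1$ and $g \perp e_k$).

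Next I would evaluate the derivative functional. Because $g \in E_{k+1}$, we have $g^{(j)}(0) = 0$ for all $j \leq k$, in particular $g^{(k)}(0) = 0$. Hence $f^{(k)}(0) = \lambda\, e_k^{(k)}(0)$, and therefore
\begin{equation}\nonumber
\frac{|f^{(k)}(0)|^2}{\|f\|^2} = \frac{|\lambda|^2 \, |e_k^{(k)}(0)|^2}{|\lambda|^2 + \|g\|^2} \leq |e_k^{(k)}(0)|^2,
\end{equation}
with equality precisely when $g = 0$, i.e.\ when $f$ is a scalar multiple of $e_k$. Choosing $f = e_k$ itself realizes the supremum, so the two sides are equal.

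There is essentially no obstacle here: the content of the lemma is purely the orthogonal decomposition together with the trivial observation that the point evaluation of the $k$-th derivative vanishes on $E_{k+1}$. The only minor care needed is to confirm that the supremum is attained (not merely approached), which the above computation makes explicit by exhibiting $e_k$ as an extremal.
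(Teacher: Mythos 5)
Your argument is correct: the orthogonal decomposition $f=\lambda e_k+g$ with $g\in E_{k+1}$, the vanishing $g^{(k)}(0)=0$, and the Pythagorean identity give exactly the claimed identity, with the supremum attained at $e_k$ (and the degenerate case $e_k^{(k)}(0)=0$ causing no trouble). The paper itself offers no written proof -- it cites Hedenmalm--Wennman and remarks that the lemma ``follows almost directly from the definitions'' -- and your verification is precisely that direct argument, so there is nothing to add.
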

For $k=0$, $|e_0(0)|^2=B_0(0)$, the (diagonal) Bergman kernel at the origin. For $k\geq 1$ $|e_k^{(k)}(0)|^2=:B_k(0)$  can be viewed as a 'higher order Bergman kernel' and we refer to \cite {HW} for interesting applications of this idea. By a classical formula of Bergman, \cite{Berg}, we have
\begin{equation}\label{2.1}
B_1(0)=|e_1'(0)|^2 = B_0(0)\frac{\partial^2\log B_0(z)}{\partial z\partial \bar z}=:B_0\omega_B,
\end{equation}
so the first order Bergman kernel is strongly related to the Bergman {\it metric}. (We are abusing notation by identifying the metric with its density; more properly we should write
$$
\omega_B=\frac{\partial^2\log B_0(z)}{\partial z\partial \bar z}idz\wedge d\bar z.)
$$

Since $h^{(k)}(0)=a_k$ and $h_j^{(k)}(0)=0$ for $j>k$, we get
$$
h_0(0)=b_0:=a_0, \quad h_1'(0)=b_1:= a_1-h_0'(0), \quad h_2''(0)=b_2:= a_2-h_1''(0)-h_0''(0), ...
$$
Then, since $h_k$ is a multiple of $e_k$ and $h_k^{(k)}(0)=b_k$, we have 
$$
h_k=\frac{b_k}{e_k^{(k)}(0)}e_k.
$$
Recalling that $e_k$ has norm 1 and that $|e_k^{(k)}(0)|^2=B_k(0)$ we find that the norm of the minimal extension is given by
\begin{equation}\label{2.2}
\|h\|^2=\sum_0^{N-1} |b_k|^2/B_k(0).
\end{equation}
When $N=1$ this is just the standard formula
$$
\|h\|^2=|a_0|^2/B_0,
$$
which shows that estimates from above of the norm of the minimal extension are equivalent to estimates from below of the (usual) Bergman kernel. The next case is $N=2$. Then we use \eqref{2.1} and find (since $h_0=(a_0/e_0(0)) e_0$) 
$$
\|h\|^2= (|a_0|^2 +|a_1-a_0 e_0'(0)/e_0(0)|^2_{\omega_B})/B_0(0).
$$
Here we think of $a_1-a_0 e_0'(0)/e_0(0)$ as a 1-form and the second term in the right hand side  is its norm for the Bergman metric. Since the off-diagonal Bergman kernel $B_0(z,w)$ is holomorphic in $z$ and antiholomorphic in $w$, and $e_0(z)=B_0(z,0)$,  we have
$$
e_0'(0)/e_0(0)=(\partial/\partial z)|_0\log B_0(z,z).
$$
Hence we can also write
$$
\|h\|^2= (|a_0|^2 +|a_1-a_0 \partial\log B_0(0)|^2_{\omega_B})/B_0(0).
$$
By the standard Ohsawa-Takegoshi theorem, $B_0(0)\geq C^{-1} e^{\phi(0)}$, with a universal constant $C$, so we get the slightly more explicit estimate
\begin{equation}\label{2.3}
\|h\|^2\leq C (|a_0|^2 +|a_1-a_0 \partial\log B_0(0)|^2_{\omega_B})e^{-\phi(0)}.
\end{equation}
Because of the following lemma we can replace the norm with respect to the Bergman metric by the Euclidean norm.
\begin{lemma} For any (1-form) $a$, we have at the origin of $\Delta$ 
  $$
  | a|^2_{\omega_B}\leq |a|^2.
  $$
\end{lemma}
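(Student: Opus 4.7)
The plan is to reduce the inequality to the comparison $B_1(0) \ge B_0(0)$ of the two Bergman kernels that appear in formula \eqref{2.1}. Writing a $(1,0)$-form at the origin as $a = \alpha\, dz$, the Euclidean norm is $|a|^2 = |\alpha|^2$, while the Bergman-metric norm is $|a|^2_{\omega_B} = |\alpha|^2/g(0)$, where $g(0)$ is the density of $\omega_B$ at the origin, i.e., $\omega_B(0) = g(0)\, idz \wedge d\bar z$. Thus the desired inequality is equivalent to $g(0) \ge 1$, and by \eqref{2.1} this is the same as $B_1(0) \ge B_0(0)$.

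The key (and essentially the only) idea is a multiplication-by-$z$ trick that turns an $A^2$ function into an element of $E_1$ without increasing its norm. Given any $h \in A^2$ with $\|h\| = 1$, set $f(z) := z h(z)$. Then $f \in E_1$, $f'(0) = h(0)$, and since $|z|\le 1$ on $\Delta$,
\[
\|f\|^2 \;=\; \int_\Delta |z|^2 |h|^2 e^{-\phi} d\lambda \;\le\; \int_\Delta |h|^2 e^{-\phi} d\lambda \;=\; 1.
\]
By the extremal characterization in Lemma~1.1 (which identifies $B_1(0)$ with the supremum of $|f'(0)|^2/\|f\|^2$ over $f\in E_1$ and $B_0(0)$ with the analogous supremum of $|f(0)|^2/\|f\|^2$ over $f\in A^2$), this gives
\[
B_1(0) \;\ge\; \frac{|f'(0)|^2}{\|f\|^2} \;\ge\; |h(0)|^2.
\]
Taking the supremum over $h$ with unit norm yields $B_1(0) \ge B_0(0)$, which is what we wanted.

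There is no real obstacle: the proof is entirely elementary once one notices that the extremal characterizations of the two Bergman kernels can be compared via the map $h \mapsto zh$, which is a norm-decreasing linear isometry (in fact a contraction) from $A^2_\phi(\Delta)$ into $E_1$ that sends the evaluation functional at $0$ to the derivative functional at $0$. The only place we use that the domain is the unit disk is the pointwise bound $|z|\le 1$; on a larger disk the inequality would acquire a corresponding scaling constant.
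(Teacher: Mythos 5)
Your proof is correct and follows essentially the same route as the paper: the multiplication-by-$z$ trick sending an $A^2$ function into $E_1$ with no increase of norm, combined with the extremal characterizations of $B_0(0)$ and $B_1(0)$, yields $B_1(0)\geq B_0(0)$ and hence $\omega_B\geq 1$ at the origin. The only cosmetic difference is that the paper applies this to the single extremal function $e_0$ rather than taking a supremum over all unit-norm $h$.
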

\begin{proof} 
  By \eqref{2.1} and Lemma A.1, at the origin
  $$
  \omega_B=B_1(0)/B_0(0)= \sup_{f\in E_1}\frac{|f'(0)|^2}{\|f\|^2 B_0(0)}.
    $$
    Now choose $f=ze_0$. Since $|z|<1$ and $e_0$ has norm 1, $f$ has norm less that 1. Moreover, $f'(0)= e_0(0)$. Hence $\omega_B\geq 1$, which proves the lemma.
    \end{proof}

Thinking of $\log B_0(z)$ as an approximation of $\phi$,  \eqref{2.3} suggests that  one might also have the inequality
$$
\|h\|^2\leq C (|a_0|^2 +|a_1-a_0 \partial\phi(0)|^2e^{-\phi(0)},
$$
but we shall see later that this does not hold. 

We next discuss briefly   estimates for larger values of $N$. We first note that there is a version of Lemma A.2 for all $k$, which is proved in much the same way.
\begin{lemma}
  $$
  B_k(0)\geq (k!)^2 B_0(0).
  $$
\end{lemma}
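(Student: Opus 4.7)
The plan is to mimic exactly the proof of Lemma A.2, upgrading the test function from $ze_0$ to $z^k e_0$. By Lemma A.1 applied to $E_k$, we have the variational characterization
\begin{equation*}
B_k(0) = \sup_{f \in E_k} \frac{|f^{(k)}(0)|^2}{\|f\|^2},
\end{equation*}
so it suffices to exhibit one competitor $f \in E_k$ realizing the claimed lower bound up to the factor $(k!)^2 B_0(0)$.

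The natural choice is $f(z) := z^k e_0(z)$. First I would check that $f \in E_k$: since $e_0$ is holomorphic on $\Delta$, the function $f$ vanishes to order exactly $k$ at the origin, so $f^{(j)}(0) = 0$ for $j < k$, placing $f$ in $E_k$. Next I would compute $f^{(k)}(0) = k!\, e_0(0)$ by Leibniz's rule, since all terms involving a derivative of $z^k$ of order less than $k$ vanish at $0$.

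For the norm, I would use that $|z|^{2k} \leq 1$ on $\Delta$, so
\begin{equation*}
\|f\|^2 = \int_\Delta |z|^{2k} |e_0|^2 e^{-\phi}\, d\lambda \leq \int_\Delta |e_0|^2 e^{-\phi}\, d\lambda = 1,
\end{equation*}
as $e_0$ has unit norm. Combining these,
\begin{equation*}
B_k(0) \geq \frac{|f^{(k)}(0)|^2}{\|f\|^2} \geq (k!)^2 |e_0(0)|^2 = (k!)^2 B_0(0),
\end{equation*}
using $|e_0(0)|^2 = B_0(0)$. There is no real obstacle here: the argument is the verbatim generalization of Lemma A.2 (which is the case $k=1$) and the only thing one uses is that multiplication by $z^k$ is a norm-decreasing map sending $A^2$ into $E_k$ whose effect on Taylor coefficients at $0$ is a shift by $k$ places together with a factor of $k!$ on the leading term.
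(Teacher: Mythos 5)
Your proposal is correct and is essentially identical to the paper's own proof: both use the variational characterization of $B_k(0)$ from Lemma A.1 and test with $f = z^k e_0$, noting $\|f\|\leq 1$ on the disk and $f^{(k)}(0)=k!\,e_0(0)$. Nothing further is needed.
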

\begin{proof} Recall that
    $$
  B_k(0)= |e_k(0)|^2=\sup_{f\in E_k} \frac{|f^{(k)}(0)|^2}{\|f\|^2}.
    $$
  Take $f=z^k e_0$. Then $f$ has norm less than 1 and $f^{(k)}(0)= k! e_0(0)$. Thus
  $$
  B_k(0)\geq (k!)^2 |e_0(0)|^2=(k!)^2 B_0(0).
  $$
\end{proof}
From the lemma and \eqref{2.2} we get the estimate for the norm of the $L^2$-minimal extension
$$
\|h\|^2\leq (\sum_{k=0}^{N-1} |b_k|^2)/B_0(0)\leq C(\sum |b_k|^2)e^{-\phi(0)},
$$
where
$$
b_k=a_k-\sum_{j=0}^{k-1} h_j^{(k)}(0)=a_k-\sum_{j=o}^{k-1}\frac{b_j}{e_j(0)} e_j^{(k)}(0).
$$
As we have seen, this is difficult to estimate even when $N=2$ and it is clear that the complexity grows with higher values of $k$ and $N$.

\subsection{Examples}
We focus on the estimates for $N=2$, i. e. extension of a first order jet. The most naive conjecture for an explicit estimate would be 
\begin{equation}\label{2.4}
\|h\|^2\leq C(|a_0|^2 +|a_1|^2) e^{-\phi(0)}.
\end{equation}
\medskip

{\bf Claim 1:}{\it There is no constant $C$ independent of $\phi$ such that for all subharmonic $\phi$, \eqref{2.4} holds.}

For this, take $\phi(z)=-2m\Re(z)$ and put $g=e^{mz}h$. Take $a_0=1, a_1=0$. If \eqref{2.4} held we would get
$$
\|h\|^2\leq C e^{-\phi(0)}=C.
$$
Hence
$$
\int_\Delta |g|^2 d\lambda=\|h\|^2\leq C,
$$
and $ g'(0)=m$. This contradicts Cauchy's estimates for the derivative.

\medskip

The estimate
\begin{equation}\label{2.5}
\|h\|^2\leq C(|a_0|^2 +|a_1-a_0\partial\phi(0)|^2) e^{-\phi(0)}.
\end{equation}
might seem  more plausible since we estimate $\|h\|^2$ by the connection in $A^2_\phi$, $h'-h\partial\phi$  instead of just $h'(0)$. 

{\bf Claim 2: }{\it There is no constant $C$ independent of $\phi$ such that for all subharmonic $\phi$ the minimal extension satisfies \eqref{2.5}.}

Here we take $\psi=\max(\phi +\epsilon\log |z|^2, -A)$, where $A$ is a large constant. If \eqref{2.5} holds for $\psi$, then \eqref{2.4} also holds for $\psi$ since $\partial\psi$ vanishes near the origin. Letting $\epsilon \to 0$ we get an extension that satisfies (2.4) for $\phi$ (assuming $\phi\geq -A$ in the disk). By the first claim, this is impossible.

\subsection{A singular variety in the bidisk}
In this section we study $L^2$-extension from the variety
$$
V=\{z\in U; z_1z_2=0 \}
$$
in the unit bidisk $U$, and we use the notation from the introduction. Following the scheme in the previous section we let
$$
E_1=\{h\in A^2_\phi(U); h(0)=0\},
$$
and
$$
E_2=\{h\in A^2_\phi(U); h|_V=0\}.
$$
Let $f$ be a holomorphic function on $V$, and let $h$ be the holomorphic extension of $f$ to $U$ of minimal norm. Again we write
$$
h=h_0 + r_1,
$$
where $h_0\perp E_1$ and $r_1\in E_1$. Continuing as before we write
$$
r_1=h_1+ r_2
$$
where $h_1\in E_1\ominus E_2$ and $r_2\in E_2$. Then $h=h_0+h_1+ r_2=h_0+h_1$ since, by minimality,  $h$ is orthogonal to $E_2$. Moreover, $h_0\perp h_1$, so
$$
\|h\|^2=\|h_0\|^2 +\|h_1\|^2.
$$
The holomorphic function $f$ on $V$ is given by a pair $(f_1,f_2)$ where $f_1$ is holomorphic on $\{z_1=0\}$ and $f_2$ is holomorphic on $\{z_2=0\}$, and $f_1(0)=f_2(0)=: a_0$.

Since $h_0$ is orthogonal to $E_1$ and $h_0(0)=a_0$ we have that
$$
h_0= \frac{a_0}{e_0(0)} e_0,
$$
where as in the previous section $e_0$ is a function of unit norm orthogonal to $E_1$. Then $|e_0(0)|^2=B_0(0)$, the (diagonal) Berman kernel at the origin, and we get
$$
\|h_0\|^2=|a_0|^2/B_0(0),
$$
just as before.

We next turn to $h_1$, which is the $L^2$-minimal extension of $\tilde f:=f-h_0$. We can not give an exact formula for the norm of $h_1$ but it is easy to give an estimate. Since $\tilde f$ vanishes at the origin we have
$\tilde f=(f_1-h_0, f_2-h_0)=:(z_2 g_1, z_1 g_2)$, where $g_1(z_2)$ and $g_2(z_1)$ are holomorphic functions of one variable. Let $G_1$ and $G_2$ be the minimal extensions of $g_1$ and $g_2$, from $V_1=\{z_1=0\}$ and $V_2=\{z_2=0\}$ respectively. By the Ohsawa-Takegoshi theorem
$$
\|G_i\|^2\leq C\int_{V_i} |g_i|^2 e^{-\phi} d\lambda.
$$
Let $H=z_2 G_1+z_1 G_2$. This is an extension of $\tilde f$ and
$$
\|H/|z|\|^2 \leq \|G_1\|^2 +\|G_2\|^2\leq C \int_V |f-h_0|^2/|z|^2 e^{-\phi} d\lambda.
$$
Hence
$$
\|H\|^2\leq \|H/|z|\|^2\leq C\int_V |f-a_0|^2/|z|^2 e^{-\phi} d\lambda.
$$
All in all we get the estimate for the minimal extension  $h$ of $f$, 
\begin{equation}
\|h\|^2\leq C( |a_0|^2/B_0(0) +\int_V |f-h_0|^2/|z|^2 e^{-\phi} d\lambda).
\end{equation}
Here one might hope that the  quotient $ \frac{|f-h_0|^2}{|z|^2}$ in the right hand side could be replaced by  the squared norm of a derivative acting on $f$. Asymptotically as $z\to 0$ on e. g. $V_2$,
$$
\frac{f-h_0}{z_1} \to f'(0)-h_0'(0)=f'(0)-a_0 \frac{e_0'(0)}{e_0(0)} =\frac{\partial f(0)}{\partial z_1} -a_0\frac{\partial\log B_0(0)}{\partial z_1}
$$
(the last equality follows as in the discussion leading to \eqref{2.3}). Again, thinking of the logarithm of the Bergman kernel, $\log B_0$ as  an approximation of $\phi$, 
one is led to look for estimates in terms of $\partial^\phi f$, like in Theorem \ref{ot}. 
Theorem \ref{ot} however also contains a factor $\log^2(\max  |z_j|^2)$, and we next give an example showing that something of this kind is necessary.

\subsection{More examples}
We first give a counterexample (cf. \cite{Oh7}) to the most naive conjecture; that the same estimate as for smooth varieties holds.

\noindent {\bf Claim 3:} {\it  There is no universal constant, independent of the plurisubharmonic weight $\phi$, such that the minimal extension satisfies
\begin{equation}
\|h\|^2\leq C\int_V |f|^2 e^{-\phi} d\lambda
\end{equation}
for all functions $f$ that vanish at the origin.}

To see this, take $f=z_1$ when $z_2=0$ and $f=0$ when $z_1=0$. Take $\phi=\log |z_1-z_2|^2$. Any extension $H$ must have the form $H=z_1 G$. If $H$ has finite norm, then $G=0$ when $z_1=z_2$. Hence $H$ vanishes to second degree at the origin, which is not possible.

\medskip

Actually, this same example shows that it does not help to add the $L^2$-norm of the twisted derivative of $f$ in the right hand side. We use the notation
$$
\partial^\phi f=e^\phi\partial e^{-\phi} f=\partial f-f\partial\phi.
$$ 
{\bf Claim 4:} {\it There is no universal  constant, independent of the plurisubharmonic weight $\phi$, such that the minimal extension satisfies
\begin{equation}\label{3.3}
\|h\|^2\leq C(\int_V |f|^2 e^{-\phi} d\lambda +\int_V |\partial^\phi f|^2 e^{-\phi} d\lambda)
\end{equation}
for all functions $f$ that vanish at the origin.} 

Indeed, with the same choice of $\phi$ and $f$ as above, we have on $V_2$ outside the origin
$$
\partial^\phi f= dz_1-z_1(1/z_1) dz_1=0.
$$
Since the weight $\phi$ has a singularity at the origin we look at regular approximations. Let $\phi_\epsilon$ be the convolution of $\phi$ with $(\pi\epsilon^2)^{-1}\chi_\epsilon$, where $\chi_\epsilon$  is the characteristic function of the disk with radius $\epsilon$. Explicitly,
$$
\phi_\epsilon(\zeta)=\frac{|\zeta|^2-\epsilon^2}{\epsilon^2} +\log\epsilon^2
$$
when $|\zeta|<\epsilon$ and $\phi_\epsilon(\zeta)=\log|\zeta|^2$ when $|\zeta|\geq \epsilon$. This gives a sequence of subharmonic functions decreasing to $\phi$ on $V_2$.  We have
$$
\partial\phi_\epsilon= \frac{\bar\zeta}{\epsilon^2}d\zeta
$$
when $|\zeta|<\epsilon$. Hence
$$
\partial^{\phi_\epsilon}f=\frac{\epsilon^2-|z_1|^2}{\epsilon^2}\chi_\epsilon dz_1
$$
on $V_2$. Since $e^{-\phi_\epsilon}$ is of size roughly $\epsilon^{-2}$ when $|\zeta|<\epsilon$, the right hand side in \eqref{3.3} stays bounded as $\epsilon\to 0$. Hence, if \eqref{3.3} held, we would  again get an extension of finite norm in $L^2(e^{-\phi})$, which we have seen is impossible. This motivates the logarithmic factor in the estimate \eqref{log} of Theorem \ref{ot}. 
\medskip

\noindent It is easy to construct a compact analogue of the example used to
prove the claims 3 and 4 above, as we briefly indicate next.

Consider two transverse lines $L_1$ and $L_2$ in $X:= \bP^2$, and the adjoint bundle $K_X+ L_1+ L_2+ \O(2)$. We define the metric $h$ on $\O(2)$ by its weights
\begin{equation}\label{eq40}
\varphi_\ep:= \log(\ep^2e^{\phi_{\rm FS}}+ |f_1-f_2|^2)+ \phi_{\rm FS}
\end{equation}
where $f_i$ is the local expression of the section $\sigma_i$ which defines the line $L_i$.

We define the section $u$ which equals $\sigma_2$ on $L_1$ and zero on $L_2$.
By Theorem \ref{otconique} we can construct an extension $U_\ep$ of $u$ for whose $L^2$ norm is bounded by
$$\int_{(\CC, 0)}\left|\frac{u}{z}\right|^{\frac{2}{1+\alpha}}e^{-\varphi_\ep}$$
which equals $\displaystyle \int_{(\CC, 0)}\frac{1}{\ep^2+ |z|^2}d\lambda$. In particular, we see that the bound \eqref{qi1} cannot be improved by replacing the
weight $e^{-\varphi_L}$ with $e^{-(1-\delta_0)\varphi_L}$, for any positive $\delta_0$.

\end{document}